\theoremstyle{definition}
\newtheorem{theorem}{Theorem}[section]
\newtheorem{prop}[theorem]{Proposition}
\newtheorem{lemma}[theorem]{Lemma}
\newtheorem{cor}[theorem]{Corollary}
\newtheorem{conj}[theorem]{Conjecture}
\newtheorem{ex}[theorem]{Example}
\newtheorem{dfn}[theorem]{Definition}
\newtheorem{remark}[theorem]{Remark}
\def\ep{\epsilon}
\def\R{\mathbb{R}}
\def\Z{\mathbb{Z}}
\def\G{\mathcal{G}}
\def\F{\mathcal{F}}
\def\H{\mathcal{H}}
\def\N{\mathbb{N}}
\def\K{\mathcal{K}}
\def\D{\mathcal{D}}
\def\T{\mathbb{T}}
\def\Q{\mathbb{Q}}
\def\k{{\bf k}}
\DeclarePairedDelimiter\ceil{\lceil}{\rceil}
\newcolumntype{P}[1]{>{\centering\arraybackslash}m{#1}}
\begin{document}
\title{Relative growth rate and contact Banach-Mazur distance} 

\noindent
\author{Daniel Rosen}
\email{daniel.rosen@rub.de}
\address{Fakult\"{a}t f\"{u}r Mathematik, Universit\"{a}tstr. 150, Ruhr-Universit\"{a}t Bochum, 44780 Bochum, Germany}

\author{Jun Zhang}
\email{jun.zhang.3@umontreal.ca}
\address{Department of Mathematics and Statistics, University of Montreal, C.P. 6128 Succ. Centre-Ville Montreal, QC H3C 3J7, Canada}

\date{\today}

\begin{abstract}
In this paper, we define a non-linear version of Banach-Mazur distance in the contact geometry set-up, called contact Banach-Mazur distance and denoted by $d_{\rm CBM}$. Explicitly, we consider the following two set-ups, either on a contact manifold $W \times S^1$ where $W$ is a Liouville manifold, or a closed Liouville-fillable contact manifold $M$. The inputs of $d_{\rm CBM}$ are different in these two cases. In the former case the inputs are (contact) star-shaped domains of $W \times S^1$, and in the latter case the inputs are contact 1-forms of $M$. In particular, the contact Banach-Mazur distance $d_{\rm CBM}$ defined in the former case is motivated by the concept, relative growth rate, which was originally defined and studied in \cite{EP00}. In addition, we investigate the relations of $d_{\rm CBM}$ to various numerical measurements in contact geometry and symplectic geometry, for instance, (contact) shape invariant defined in \cite{Eli91}, (coarse) symplectic Banach-Mazur distance recently defined and studied in \cite{SZ18} and \cite{Ush18}. Moreover, we obtain several large-scale geometric properties in terms of $d_{\rm CBM}$. Finally, we propose a quantitative comparison between elements in the derived categories of sheaves of modules (over certain topological spaces). This is based on several important properties of the singular support of sheaves. 
\end{abstract}

\maketitle

\tableofcontents

\section{Introduction and main results} \label{ssec-intro}
Quantitative comparisons between objects in symplectic geometry have been investigated for decades. For instance, in terms of the objects from dynamics, the well-known Hofer distance $d_{\rm Hofer}$ is defined between Hamiltonian diffeomorphisms of a symplectic manifold, which is based on the fundamental work \cite{Hof90} and \cite{LM95}. This led to an influential research direction called Hofer geometry (\cite{Pol01}), as well as far-reaching applications to Hamiltonian dynamic (\cite{Ush13}, \cite{PS16}, \cite{PSS17}, \cite{Zha19}). For another instance, inspired by \cite{OP17}, the recent work \cite{SZ18}, \cite{Ush18} and \cite{GU19} studied quantitative comparisons between symplectic objects constructed from a geometric perspective, that is, star-shaped domains of a Liouville manifold. These are symplectic Banach-Mazur distance $d_{\rm SBM}$ and coarse symplectic Banach-Mazur distance $d_c$, and they can be viewed as non-linear analogues of the classical Banach-Mazur distance in convex geometry (\cite{Rud00}). In the set-up of contact geometry, a Hofer-like distance $d_{\rm Shelukhin}$ between contactomorphisms or contact isotopies was recently defined and investigated in \cite{She17}. This opens new research directions (\cite{RZ18}), and also provides a useful measurement if one wants to study analogue quantitative problems in contact geometry (\cite{DRS16}). This paper is devoted to the study of quantitative comparisons between both contactomorphisms (different from $d_{\rm Shelukhin}$ in \cite{She17}) and ``contact domains'' of certain contact manifolds. The following table summarizes what was elaborated above. 

\begin{center}
\includegraphics[scale=0.85]{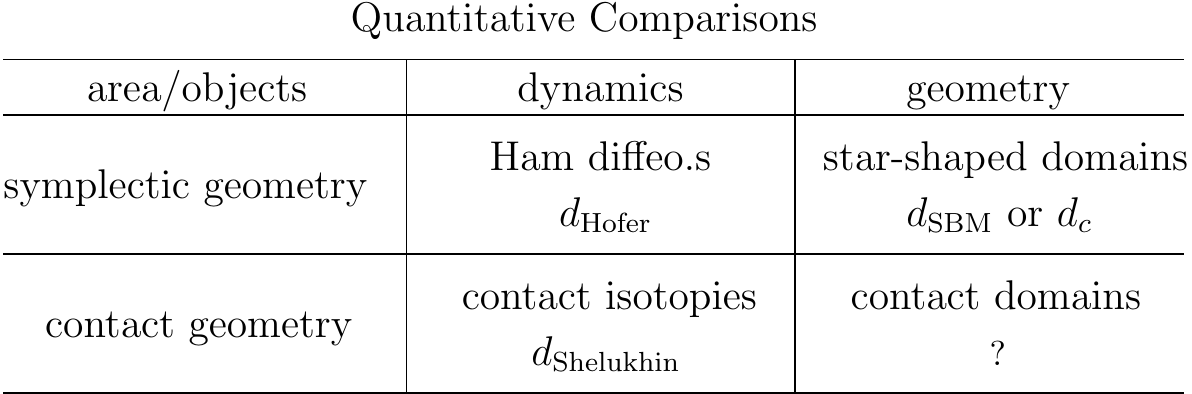}
\end{center}

\subsection{Pseudo-metric spaces from partial orders} In this paper, both the comparison between contactomorphisms, more precisely, certain equivalence classes of contact isotopies, and the comparison between contact domains will be heavily influenced by the concept, relative growth rate, which was first defined and investigated in \cite{EP00} (and also in a recent work \cite{CEP17} on certain non-compact contact manifolds). The relative growth rate can be defined in a broad way. In general, let $G$ be a semi-group endowed with a bi-invariant partial order $\geq$ \footnote{A partial order $\geq$ is bi-invariant if, for any $a,b \in G$, we have $a \geq b$ if and only if $ac \geq bc$ and $ca \geq cb$ for any $c \in G$.}.  We call an element $a \in G\backslash \{1\}$ {\it a dominant} if, for any $b \in G$, there exists some $k \in \N$ such that $a^k \geq b$. The set of all the dominants of $G$ is denoted by $G_+$. For $a \in G_+$ and $b \in G$, define  
\begin{equation} \label{dfn-rgr}
\rho^+_{\geq }(a,b) := \lim_{l \to \infty} \frac{\inf\{k \in \Z\,|\, a^k \geq b^l\}}{l} =  \inf\left\{ \frac{k}{l} \in \Q\, \bigg| \, (k \in \Z, l \in \N) \, (a^k \geq b^l)\right\}.
\end{equation}
For the second equality in (\ref{dfn-rgr}), see the proof of Theorem 2.4 in \cite{BS10}. If $b \in G_+$ as well, then similarly one can define $\rho^-_{\geq}(a,b) := \rho^+_{\geq}(b,a)$. The {\it relative growth rate between $a$ and $b$} is defined by $\gamma_{\geq}(a,b) := \max\{|\rho^+_{\geq}(a,b)|, |\rho^-_{\geq}(a,b)|\}$ for $a,b \in G_+$. In Section \ref{sec-app}, Appendix, we show that the relative growth rate $\gamma_{\geq}$  (in fact $\rho_{\geq}^+$) over $G_+$ defined in (\ref{dfn-rgr}) admits a further simplification which can be defined via only prime numbers. Moreover, if both $a, b \in G_+$, then both $\rho^+_{\geq} (a,b)$ and $\rho^-_{\geq}(a,b)$ are non-zero. This is due to the cute inequality (1.1.A) in \cite{EP00}, that is, $\rho^+_{\geq} (a,b) \rho^-_{\geq}(a,b) \geq 1$. One can use the relative growth rate to construct a pseudo-metric space. Explicitly, define $d_{\geq}: G_+ \times G_+ \to [0, \infty)$ by 
\begin{equation} \label{dfn-pm}
d_{\geq}(a, b) = \ln \gamma_{\geq}(a,b)
\end{equation}
for $a, b \in G_+$, and it is easy to check that $(G_+, d_{\geq})$ is a pseudo-metric space. The famous (non)-orderability phenomenon (\cite{EP00}, \cite{EKP06}) of contact manifolds is defined (or discovered) by a relation $\geq_o$ applied to $G := \widetilde{\rm Cont}_0(M, \xi)$, the universal covering of the identity component of the contactomorphism group of a contact manifold $(M, \xi)$, via a comparison between contact Hamiltonian functions. For details and its related background, see subsection \ref{subsec-pf-g1-prop}. 

For our own purpose, this paper will mainly consider a slightly different semi-group with a relation different from $\geq_o$. Let $(M, \xi)$ be a contact manifold with a co-oriented contact structure. Denote by ${\rm PCont}_0(M, \xi)$ the set of all the path (parametrized by $[0,1]$) inside contactomorphism group of $(M, \xi)$ starting from $\mathds{1}$, and by ${\rm PCont}_+(M, \xi)$ the set of all positive paths in ${\rm PCont}_0(M, \xi)$. Denote by $\overrightarrow{G} = {\rm PCont}_+(M, \xi)/{\sim}$, where $\phi \sim \psi$ if and only if $\phi_1 = \psi_1$ and they are homotopic with fixed endpoints through {\it positive} paths in ${\rm PCont}_+(M, \xi)$. For any two elements $[\phi]$ and $[\psi]$ in $\overrightarrow{G}$, assume that $[\phi] = [\{\phi_t\}_{t \in [0,1]}]$ and $[\psi] = [\{\psi_t\}_{t \in [0,1]}]$. Define 
\[ [\phi] \cdot [\psi] := [ \{\phi_t \psi_t\}_{t \in [0,1]}]. \]
It is easy to check that the operator ``$\cdot$'' defines a semi-group structure on $\overrightarrow{G}$. For brevity, we will omit the notation ``$\cdot$'', i.e., $[\phi][\psi]: = [\phi] \cdot [\psi]$. Moreover, there exists a natural projection $\pi: \overrightarrow{G} \to {\widetilde {\rm Cont}}_0(M, \xi)$. By definition, $(M, \xi)$ is orderable or non-orderable if and only if $\mathds{1} \notin \pi(\overrightarrow{G})$ or $\mathds{1} \in \pi(\overrightarrow{G})$, respectively, where $\mathds{1}$ is the identity class in $\widetilde{{\rm Cont}}_0(M, \xi)$ represented by a contractible loop. Finally, one can check that $\pi([\phi][\psi]) = \pi([\phi]) \pi([\psi])$, that is, $\pi$ is a semi-group morphism. Inspired by subsection 6.3 in \cite{EKP06}, for $[\phi], [\psi] \in \overrightarrow G$, define the following relation, 
\begin{equation} \label{order-2}
 [\phi] \geq_+ [\psi] \,\,\,\,\mbox{if and only if} \,\,\,\, [\phi] = [\psi] \,\,\mbox{or} \,\, [\phi] = [\psi][\theta] \,\,\mbox{for some $[\theta] \in \overrightarrow{G}$.}
\end{equation}
This relation $\geq_+$ applies more broadly than $\geq_o$ in the sense that $\geq_+$ can also apply, to some extent, to non-orderable contact manifolds (see (iv) in Proposition 6.10 in \cite{EKP06}). Let us focus on the orderable contact manifolds, and here is our first main result. 

\begin{theorem} \label{G1-prop}
Let $(M, \xi)$ be an orderable contact manifold. Then the relation $\geq_+$ in (\ref{order-2}) defines a bi-invariant partial order on $\overrightarrow{G}$. Moreover, the projection $\pi: \overrightarrow{G} \to {\widetilde {\rm Cont}}_0(M, \xi)$ is monotone, i.e., for any $[\phi], [\psi] \in \overrightarrow{G}$ such that $[\phi] \geq_+ [\psi]$, we have $\pi([\phi]) \geq_o \pi([\psi])$, where $\geq_o$ is the partial order on ${\widetilde {\rm Cont}}_0(M, \xi)$ defined in (\ref{po-ep}).
\end{theorem}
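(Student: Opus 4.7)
My plan is to verify the order axioms for $\geq_+$, check bi-invariance, and then deduce monotonicity of $\pi$, in that order.

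\textbf{Order axioms.} Reflexivity is immediate from the first clause of (\ref{order-2}). Transitivity follows from associativity of the semi-group structure on $\overrightarrow{G}$, together with the standard fact that the pointwise product of two positive paths is again positive (a one-line contact-Hamiltonian calculation showing that the Hamiltonian of $\phi_t\psi_t$ is $h^\phi_t + \lambda^\phi_t \cdot (h^\psi_t\circ\psi_t)$ with $\lambda^\phi_t>0$). Antisymmetry is the step where orderability enters essentially. Suppose $[\phi]\geq_+[\psi]$ and $[\psi]\geq_+[\phi]$ with $[\phi]\neq[\psi]$. Then there exist $[\theta_1],[\theta_2]\in\overrightarrow{G}$ with $[\phi]=[\psi][\theta_1]$ and $[\psi]=[\phi][\theta_2]$, so $[\phi]=[\phi][\theta_2][\theta_1]$ in $\overrightarrow{G}$. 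Applying the semi-group homomorphism $\pi$ and cancelling in the group $\widetilde{\rm Cont}_0(M,\xi)$ gives $\pi([\theta_2][\theta_1])=\mathds{1}$. Since $[\theta_2][\theta_1]\in\overrightarrow{G}$, this places $\mathds{1}\in\pi(\overrightarrow{G})$, i.e., a positive contractible loop of contactomorphisms exists, contradicting orderability of $(M,\xi)$.

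\textbf{Bi-invariance.} Left invariance is immediate: $[\phi]=[\psi][\theta]$ yields $[\chi][\phi]=([\chi][\psi])[\theta]$ by associativity. Right invariance is the subtle part: given $[\phi]=[\psi][\theta]$, I must produce $[\theta']\in\overrightarrow{G}$ with $[\phi][\chi]=[\psi][\chi][\theta']$. The naive candidate $\theta'_t:=\chi_t^{-1}\theta_t\chi_t$ gives a literal pointwise equality of representative paths, but a direct Hamiltonian computation shows $\theta'_t$ need not be positive. I would instead take the static conjugation $\theta'_t:=\chi_1^{-1}\theta_t\chi_1$, which is manifestly positive because conjugation by the fixed contactomorphism $\chi_1$ transforms the contact Hamiltonian by pullback times the positive conformal factor of $\chi_1$. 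The two paths $t\mapsto\psi_t\theta_t\chi_t$ and $t\mapsto\psi_t\chi_t\chi_1^{-1}\theta_t\chi_1$ agree at $t=1$, so it remains to build a positive-path homotopy between them; this I would do by convex interpolation of the intermediate insertion in the (convex) cone of positive contact Hamiltonians, using that convex combinations of positive Hamiltonians remain positive.

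\textbf{Monotonicity of $\pi$.} If $[\phi]=[\psi][\theta]$, then $\pi([\phi])=\pi([\psi])\pi([\theta])$. The class $\pi([\theta])$ is the endpoint of the positive path $\theta_t$, so by the very definition of $\geq_o$ we have $\pi([\theta])\geq_o\mathds{1}$. By bi-invariance of $\geq_o$ (established in \cite{EP00}), we conclude $\pi([\phi])=\pi([\psi])\pi([\theta])\geq_o\pi([\psi])$.

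The main obstacle is the positive-homotopy construction required for right invariance of $\geq_+$; by contrast, antisymmetry is conceptually short but is precisely where the orderability of $(M,\xi)$ is used in a non-negotiable way.
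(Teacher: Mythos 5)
Your overall architecture coincides with the paper's: reflexivity and transitivity by definition-chasing in the semi-group, antisymmetry by reducing to $[\phi]=[\phi][\theta]$, pushing forward under the semi-group morphism $\pi$, cancelling in the group $\widetilde{\rm Cont}_0(M,\xi)$, and contradicting orderability via $\pi([\theta])=\mathds{1}$; left-invariance for free; right-invariance via a static conjugation by the time-one map; and monotonicity from positivity of the inserted factor. Your antisymmetry argument is exactly the paper's, and your monotonicity argument is a legitimate (slightly more formal) variant: the paper instead computes $h_{\alpha}(\psi\theta)=h_{\alpha}(\psi)+(e^{g_{\alpha}(\psi)}h_{\alpha}(\theta))\circ\psi_t^{-1}\geq h_{\alpha}(\psi)$ directly, but invoking $\pi([\theta])\geq_o\mathds{1}$ plus bi-invariance of $\geq_o$ is equivalent. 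You also correctly identify that the time-dependent conjugate $\chi_t^{-1}\theta_t\chi_t$ fails to be positive and that the static conjugate $\chi_1^{-1}\theta_t\chi_1$ is the right candidate; this is precisely the paper's choice (it formulates the claim as $[\psi][\theta]=[\psi_1\theta\psi_1^{-1}][\psi]$, which is the same commutation statement with the roles swapped).

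The one genuine gap is the positive-homotopy step for right-invariance, which you flag as the main obstacle but propose to close by ``convex interpolation of the intermediate insertion in the cone of positive contact Hamiltonians.'' This does not work as stated: the equivalence relation defining $\overrightarrow{G}$ requires homotopies \emph{with fixed endpoints}, and the time-one map generated by a convex combination $(1-s)H_0+sH_1$ of two contact Hamiltonians whose flows happen to share a common time-one endpoint is in general not that common endpoint for $0<s<1$. Convexity preserves positivity but destroys the endpoint condition, so the interpolated family leaves the fixed-endpoint homotopy class and proves nothing about $\sim$. The paper's actual device (following formula (84) in \cite{EKP06}) is a reparametrization trick: replace the two factors by $\psi'_t$ (run $\psi$ at double speed on $[0,1/2]$, then freeze at $\psi_1$) and $\theta'_t$ (freeze at $\mathds{1}$ on $[0,1/2]$, then run $\theta$ at double speed), for which one has the literal pointwise identity $\psi'_t\theta'_t=(\psi_1\theta'_t\psi_1^{-1})\psi'_t$; the reparametrization homotopies fix endpoints and can be chosen through positive intermediate paths, and a corner-smoothing finishes the argument. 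You would need to replace your interpolation step by this (or an equivalent) construction for the proof to go through.
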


The proof of Theorem \ref{G1-prop} is presented in subsection \ref{subsec-pf-g1-prop}. This theorem immediately implies the following result. 

\begin{cor} \label{cor-po+}
Let $(M, \xi)$ be an orderable contact manifold. Then $(\overrightarrow{G}, d_{\geq _+})$ is a pseudo-metric space, where $d_{\geq_+}$ is defined as in (\ref{dfn-pm}) with respect to the partial order $\geq_+$, and the natural projection $\pi: \overrightarrow{G} \to \widetilde{\rm Cont}_0(M, \xi)$ is a monotone decreasing embedding $\pi: (\overrightarrow{G}, d_{\geq _+}) \to (G_+, d_{\geq_o})$, i.e., for any $[\phi], [\psi] \in \overrightarrow{G}$, we have $d_{\geq_+}([\phi], [\psi]) \geq d_{\geq_o}(\pi([\phi]), \pi([\psi]))$. 
\end{cor}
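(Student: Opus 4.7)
The plan is to obtain Corollary \ref{cor-po+} by directly feeding the output of Theorem \ref{G1-prop} into the general pseudo-metric construction from (\ref{dfn-rgr})--(\ref{dfn-pm}).

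For the claim that $(\overrightarrow{G}, d_{\geq_+})$ is a pseudo-metric space, I would apply that general machinery to the semi-group $\overrightarrow{G}$ equipped with the bi-invariant partial order $\geq_+$ provided by Theorem \ref{G1-prop}. This produces the functions $\rho^\pm_{\geq_+}$, the relative growth rate $\gamma_{\geq_+}$, and $d_{\geq_+} = \ln \gamma_{\geq_+}$. The pseudo-metric axioms — non-negativity, symmetry (via $\rho^-_{\geq_+}(a,b) = \rho^+_{\geq_+}(b,a)$), and the triangle inequality (via the sub-additivity of $\rho^+$, which uses bi-invariance) — then follow formally, as already indicated in the text. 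One preliminary check is that each $[\phi] \in \overrightarrow{G}$ is a dominant with respect to $\geq_+$, so that $\rho^\pm_{\geq_+}$ are finite; this reduces to showing that for two positive contact isotopies a sufficiently large iterate of either admits a positive-path factorization through the other, which should use compactness of $[0,1]$ and a lower/upper bound comparison on the generating contact Hamiltonians.

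The monotone decreasing embedding part is then a bookkeeping consequence of $\pi$ being both a semi-group morphism (verified in the text just before Theorem \ref{G1-prop}) and monotone (by Theorem \ref{G1-prop}). Fixing $[\phi], [\psi] \in \overrightarrow{G}$ and $l \in \N$, any $k$ with $[\phi]^k \geq_+ [\psi]^l$ also satisfies $\pi([\phi])^k = \pi([\phi]^k) \geq_o \pi([\psi]^l) = \pi([\psi])^l$, hence
\[ \bigl\{k \in \Z : [\phi]^k \geq_+ [\psi]^l\bigr\} \subseteq \bigl\{k \in \Z : \pi([\phi])^k \geq_o \pi([\psi])^l\bigr\}. \]
The infimum of the right-hand set is therefore bounded above by the infimum of the left; dividing by $l$ and sending $l \to \infty$ yields $\rho^+_{\geq_o}(\pi([\phi]), \pi([\psi])) \leq \rho^+_{\geq_+}([\phi], [\psi])$. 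Swapping the roles of $[\phi]$ and $[\psi]$ gives the parallel statement for $\rho^-$. Taking the maximum of absolute values — which agrees with the plain maximum for the dominants at hand, since the $\rho^\pm$ are positive there in view of $\rho^+ \rho^- \geq 1$ — and applying $\ln$ converts these into the desired inequality $d_{\geq_o}(\pi([\phi]), \pi([\psi])) \leq d_{\geq_+}([\phi], [\psi])$.

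In short, essentially all the substance of the corollary is packaged inside Theorem \ref{G1-prop}. The main obstacle is thus the preliminary dominant verification: one must confirm, using the orderability hypothesis, that the set of $\geq_+$-dominants in $\overrightarrow{G}$ really is all of $\overrightarrow{G}$, and likewise that $\pi$ lands inside the $\geq_o$-dominants $G_+$ of $\widetilde{\rm Cont}_0(M,\xi)$. Once this is settled, the remainder is routine manipulation of the infimum defining $\rho^\pm$ together with a direct translation of the monotonicity of $\pi$ into a decrease of distance.
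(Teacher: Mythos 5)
Your proposal is correct and follows exactly the route the paper intends: the paper offers no separate proof of this corollary, declaring it an immediate consequence of Theorem \ref{G1-prop} fed into the general construction (\ref{dfn-rgr})--(\ref{dfn-pm}), and your derivation of $\rho^{\pm}_{\geq_o}\circ\pi \leq \rho^{\pm}_{\geq_+}$ from the monotonicity and morphism properties of $\pi$ is the right bookkeeping. Your observation that one must also check every element of $\overrightarrow{G}$ is a $\geq_+$-dominant (and that $\pi$ lands in $G_+$) is a legitimate point the paper leaves implicit, and your sketch of it via lower bounds on the contact Hamiltonians of iterates is the standard and correct way to close it.
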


Therefore, the non-triviality (if it exists) of the pseudo-metric space $(G_+, d_{\geq_o})$ implies the non-triviality of the pseudo-metric space $(\overrightarrow{G}, d_{\geq _+})$ \footnote{We call a pseudo-metric space $(X, d)$ {\it  non-trivial} if there exist two elements $x,y \in X$ such that $d(x,y) >0$; otherwise $(X,d)$ is called a {\it trivial} pseudo-metric space.}. Similarly, a large-scale geometric property (if it exists) of the pseudo-metric space $(G_+, d_{\geq_o})$ implies a large-scale geometric property of the pseudo-metric space $(\overrightarrow{G}, d_{\geq _+})$. For non-orderable contact manifold, there does not exist such a conclusion as in Theorem \ref{G1-prop} (thus no such conclusion as in Corollary \ref{cor-po+}). The paragraph right after the proof of Theorem \ref{G1-prop} in subsection \ref{subsec-pf-g1-prop} provides a detailed explanation.  

\begin{remark} The necessity of working on ${\rm PCont}_+(M, \xi)$ or its quotient $\overrightarrow{G}$ will be clear when we reach subsection \ref{dyn-geo}. Roughly speaking, each positive contact isotopy corresponds to a contact version of fiberwise star-shaped domain of a contact manifold. The requirement of a homotopy through (only) positive contact isotopies guarantees that the induced deformation of the corresponding domains takes place within the class of fiberwise star-shaped domains. On the other hand, we emphasize that the partial order $\geq_o$ defined on ${\widetilde{\rm Cont}}_0(M, \xi)$ is not applicable to $\overrightarrow{G}$. It seems impossible to show that $\geq_o$ is anti-symmetric and transitive on $\overrightarrow{G}$, partially due to the fact that no inverse exists in $\overrightarrow{G}$. 
\end{remark}

\subsection{Contact Banach-Mazur distance} \label{section-cbm-intro} The relative growth rate also inspires the definition, contact Banach-Mazur distance $d_{\rm CBM}$, between contact domains. The comparison between domains in the contact geometry set-ups involves new ideas. Compared with the symplectic geometry set-ups, let us emphasize the following essential difference appearing in the contact geometry set-ups. Recall that one of the key ingredients in the definition of the (coarse) symplectic Banach-Mazur distance $d_c$ (see the definition (\ref{dfn-csbm})) in the symplectic geometry set-up is the ``rescaling'' of a star-shaped domain $U$ by a constant $C>0$, denoted by $CU$. This is achieved by flowing along the Liouville vector field, a defining element of a Liouville manifold. However, in the contact geometry set-ups, where no such canonical vector field exists, how to define this rescaling becomes unclear and ambiguous. To initiate our discussion, we will consider the following two contact geometry set-ups in this paper: one is the contactization of a Liouville manifold $W$, i.e., $W \times S^1$; the other is a closed Liouville-fillable contact manifold $M$, i.e., it can be viewed as the boundary of a Liouville domain. In the formal case, the objects in the discussion are fiberwise star-shaped domains $U \subset W \times S^1$ (its definition is given in the beginning of subsection \ref{subsec-pf-ccbm}); in the later case, the objects in the discussion are those domains constructed via contact 1-forms $\alpha$ on $M$, denoted by $W^{\alpha}$ (see the definition (\ref{geo-alpha})). We will discuss these two cases separately.

\subsubsection{$d_{\rm CBM}$ between domains} For the quantitative comparison between fiberwise star-shaped domains of $W \times S^1$, we will use the following key definition, which can be viewed as a rescaling in this contact geometry set-up.

\begin{dfn} \label{dfn-cont-rescale} For a fiberwise star-shaped domain $U$ of the contact manifold $W \times S^1$, and any $k \in \N$, define $U/k: = \tau^{-1}_k(U)$, where $\tau_k$ is defined as follows,   
\begin{equation} \label{cover}
\tau_k: W \times S^1 \to W \times S^1 \,\,\,\,\mbox{by} \,\,\,\, \tau_k(p, t) = \left(\phi_L^{\ln k}(p), k t\right),
\end{equation}
for any point $(p,t) \in W \times S^1$.
 \end{dfn}

Roughly speaking, $U/k$ not only shrinks the $W$-component of $U$, but also repeats this $W$-component for $k$ many times along the $S^1$ factor. In particular, if $U$ is a split domain, i.e., $U = \check{U} \times S^1$ for some star-shaped domain $\check{U} \subset W$, then $U/k = \tau^{-1}_k(U) = \phi_{L}^{-\ln k}(\check{U}) \times S^1$. Here is a more concrete example. 

\begin{ex} \label{ex-rescale} Let $W = \R^{2n}$ and $U = B^{2n}(R) \times S^1$. Endow $W$ with the standard Liouville vector field $L = \frac{1}{2}(x \partial_x + y \partial_y)$ and $\omega = \sum_{i=1}^n dx_i \wedge dy_i = d\lambda_{\rm std}$ where $\lambda_{\rm std} = \frac{1}{2} \sum_{i=1}^n x_i dy_i -y_i dx_i$. Then it is easy to check that for any $p = (x_1, y_1, ..., x_n, y_n) \in W$, 
\[ \phi_L^{-\ln k}(p) = \left(\frac{x_1}{\sqrt{k}},\frac{y_1}{\sqrt{k}}, ..., \frac{x_n}{\sqrt{k}}, \frac{y_n}{\sqrt{k}}\right) = \frac{p}{\sqrt{k}}. \]
Meanwhile, recall that $B^{2n}(R) = \{p \in \R^{2n} \,|\, \pi |p|^2 < R\}$. Hence, by Definition \ref{dfn-cont-rescale}, $(B^{2n}(R) \times S^1)/k \simeq B^{2n}(R/k) \times S^1$, where the $S^1$ factor on the right-hand side should be regarded as the $k$-th cover of the $S^1$ factor on the left-hand side. 
\end{ex}

\begin{remark} \label{rmk-cont} One can also define the covering map $\tau_k$ to be $\tau_k(p, t) = (\phi_L^{-\ln k}(p), k t)$. Then, instead of shrinking the $W$-component of a given fiberwise star-shaped domain $U$, $\tau^{-1}_k(U)$ will amplify the $W$-component of $U$. The reason why we chose our covering map as in (\ref{cover}) is that $\tau_k$ preserves the contact 1-form $\alpha_{\rm std} = \lambda_{\rm std} + dt$ up to a factor $k$ (hence, $\tau_k$ preserves the contact structure of $W \times S^1$).  \end{remark}

\begin{remark} We emphasize that, for $k<l$, it is not necessarily true that $U/l \subset U/k$ (but this holds trivially if $U$ is a split domain).  For instance, if $U$ is a 2-disk bundle over $S^1 = \R/\Z$ such that for $t \in S^1$ the corresponding fiber $U|_{t}$ is a 2-disk of the radius $r_t = \cos(2\pi t) + 2$. Then $U/2$ is also a 2-disk bundle over $S^1$ such that for $t \in S^1$ the corresponding fiber $(U/2)|_t$ is a 2-disk of the radius $r'_t = \frac{\cos(4\pi t)}{2} + 1$. Observe that when $t = \frac{1}{2}$, $r_{\frac{1}{2}} = 1$ while $r'_{\frac{1}{2}} = \frac{3}{2}$. Since $r'_{\frac{1}{2}} > r_{\frac{1}{2}}$, $U/2$ is not contained in $U$. 
\end{remark}

Here is a contact geometric analogue of the symplectic comparison $d_c$ defined in (\ref{dfn-csbm}). 

\begin{dfn} \label{dfn-ccbm} Let $U, V$ be two fiberwise star-shaped domains of the contact manifold $W \times S^1$ where $W$ is a Liouville manifold. Define the {\it contact Banach-Mazur distance} $d_{\rm CBM}(U,V)$ between $U$ and $V$ as follows. First, define
\begin{equation}\label{rho-c} 
\rho_c(U, V)= \inf\left\{\frac{k}{l} \in \Q_+\,\bigg|\, \begin{array}{l} \exists\, \mbox{a compactly supported contact isotopy } \\ \mbox{$\phi = \{\phi_t\}_{t \in [0,1]}$ on $W \times S^1$ s.t. $\phi_1(U/k) \subset V/l$} \end{array}\right\},
\end{equation}
where rescaling $U/k$ and $V/l$ are defined in Definition \ref{dfn-cont-rescale}. Then define
\begin{equation*}
\gamma_{\rm CBM}(U, V) = \max\{\rho_c(U, V), \rho_c(V,U)\}. 
\end{equation*}
Finally, $d_{\rm CBM}(U,V) = \ln \gamma_{\rm CBM}(U,V)$ (if taking $\ln$ is applicable).
\end{dfn}

Different from the coarse symplectic Banach-Mazur distance $d_c$, the contact Banach-Mazur distance $d_{\rm CBM}(U,V)$ sometimes is not well-defined. The subsection \ref{subsec-pf-ccbm} provides a detailed discussion on this issue, which depends on (non)-squeezable domains (see Definition \ref{dfn-sq}). Denote by $\mathcal N_{W \times S^1}$ the set of all the non-squeezable fiberwise star-shaped domains of $W\times S^1$. The following theorem is our second main result. 

\begin{theorem} \label{lemma-ccbm-property} 
Suppose that $W \times S^1$ satisfies the condition that $\mathcal N_{W \times S^1} \neq \emptyset$. Then $d_{\rm CBM}$ defines a non-trivial pseudo-metric on $\mathcal N_{W \times S^1}$. Moreover, for any compactly supported contactomorphisms $\phi, \psi \in {\rm Cont}_0(W \times S^1)$ and any fiberwise star-shaped domains $U,V$ in $\mathcal N_{W \times S^1}$, we have $d_{\rm CBM}(\phi(U), \psi(V)) = d_{\rm CBM}(U, V)$.
\end{theorem}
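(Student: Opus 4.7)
The plan is to establish, in order: (a) finiteness of $\rho_c(U,V)$, (b) the pseudo-metric axioms including non-negativity, both relying on the non-squeezability hypothesis, (c) non-triviality, and (d) invariance under $\Cont_0(W \times S^1)$. The unifying tool is the observation that, since $\tau_k^*\alpha_{\rm std} = k\,\alpha_{\rm std}$ and $\tau_k \circ \tau_l = \tau_{kl}$, each $\tau_k$ is a proper covering contactomorphism, so $(U/k)/l = U/(kl)$, and any compactly supported contact isotopy $\Phi_t$ on $W \times S^1$ admits a compactly supported contact isotopy lift $\widetilde\Phi_t^{(k)}$ under $\tau_k$ satisfying $\widetilde\Phi_t^{(k)}(\tau_k^{-1}(A)) = \tau_k^{-1}(\Phi_t(A))$ for every subset $A$; properness of $\tau_k$ ensures the lift remains compactly supported. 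Constructing and book-keeping these lifts carefully is the main technical ingredient underlying every step.

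Using the lifting, I prove the submultiplicative inequality $\rho_c(U, X) \leq \rho_c(U, V)\,\rho_c(V, X)$ as follows: starting from $\Phi_1(U/k) \subset V/l$ and $\Psi_1(V/m) \subset X/n$, I lift $\Phi$ under $\tau_m$ and $\Psi$ under $\tau_l$ to obtain $\widetilde\Phi_1^{(m)}(U/(km)) \subset V/(lm)$ and $\widetilde\Psi_1^{(l)}(V/(lm)) \subset X/(ln)$, whose composition realizes $(\widetilde\Psi_1^{(l)} \circ \widetilde\Phi_1^{(m)})(U/(km)) \subset X/(ln)$ with ratio $(km)/(ln) = (k/l)(m/n)$. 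Passing to infima, then to the max $\gamma_{\rm CBM}$ and taking $\ln$, yields the triangle inequality for $d_{\rm CBM}$; symmetry is tautological from the max, and $d_{\rm CBM}(U, U) = 0$ is witnessed by the identity isotopy. For non-negativity, the same composition with $X = U$ produces a compactly supported contact isotopy moving $U/(km)$ into $U/(ln)$, which by non-squeezability of $U$ forces $km \leq ln$; taking infima yields $\rho_c(U, V)\,\rho_c(V, U) \geq 1$, so $\gamma_{\rm CBM}(U, V) \geq 1$ and $d_{\rm CBM}(U, V) \geq 0$.

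For finiteness, I argue that for $k$ sufficiently large $U/k$ is contained in an arbitrarily thin tubular neighborhood of a loop $\{p_0\} \times S^1$, which can be contact-isotoped into any open $V$ by standard contact flexibility, hence $\rho_c(U, V) < \infty$. For non-triviality, pick $U \in \mathcal N_{W \times S^1}$ (non-empty by hypothesis): with $V = U/2$ (itself non-squeezable, since this property is inherited through the covers $\tau_k$), the identity isotopy gives $\rho_c(U, U/2) \leq 2$ and $\rho_c(U/2, U) \leq 1/2$, while the non-squeezability estimate forces equality in both, so $d_{\rm CBM}(U, U/2) = \ln 2 > 0$. Finally, for contactomorphism invariance, write $\phi, \psi$ as time-one maps of compactly supported contact isotopies $\phi_t, \psi_t$, lift them under $\tau_k, \tau_l$ respectively, and consider $\Psi_t := \widetilde\psi_t^{(l)} \circ \Phi_t \circ (\widetilde\phi_t^{(k)})^{-1}$: this is a compactly supported contact isotopy realizing $\Psi_1(\phi(U)/k) \subset \psi(V)/l$ whenever $\Phi_1(U/k) \subset V/l$, giving $\rho_c(\phi(U), \psi(V)) \leq \rho_c(U, V)$, and applying the same argument to $\phi^{-1}, \psi^{-1}$ yields equality. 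The main obstacle throughout is the careful verification that every lifted or conjugated contact isotopy retains compact support; once this is justified, the remainder of the proof reduces to the elementary ratio arithmetic displayed above.
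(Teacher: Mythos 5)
Your proposal follows essentially the same route as the paper: the covering maps $\tau_k$ and the lifting of compactly supported contact isotopies under them are the engine for both the submultiplicativity $\rho_c(U_1,U_3)\le\rho_c(U_1,U_2)\rho_c(U_2,U_3)$ (hence the triangle inequality) and the product bound $\rho_c(U,V)\rho_c(V,U)\ge 1$ from non-squeezability (hence non-negativity), and the invariance under ${\rm Cont}_0(W\times S^1)$ is obtained exactly as in the paper by lifting the ambient contactomorphisms and composing. Two remarks. First, your explicit non-triviality argument (taking $V=U/2$, noting $(U/2)/k=U/(2k)$ and that non-squeezability is inherited, and squeezing the two inequalities against $\rho_c(U,U/2)\rho_c(U/2,U)\ge1$ to get $d_{\rm CBM}(U,U/2)=\ln 2$) is a genuine addition: the paper's proof verifies the pseudo-metric axioms and invariance but never actually exhibits two domains at positive distance, so this step is welcome. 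Second, your finiteness argument is misstated: $U/k=\tau_k^{-1}(U)$ shrinks onto a neighborhood of ${\rm Core}(W)\times S^1$, not of a single loop $\{p_0\}\times S^1$ (the core of a general Liouville manifold is not a point), and the claim that a thin such neighborhood "can be contact-isotoped into any open $V$ by standard contact flexibility" is precisely the negligibility property, which fails in general --- indeed, if it held for $U/k$ you could squeeze $U/k$ into $U/l$ for all $l>k$, contradicting the standing hypothesis $\mathcal N_{W\times S^1}\neq\emptyset$. Fortunately no flexibility is needed: since $U$ is bounded, $U/k\subset\{s<C/k\}\times S^1$ in the symplectization coordinates, and $V$, being fiberwise star-shaped, is an open neighborhood of ${\rm Core}(W)\times S^1$, so the plain inclusion $U/k\subset V/1$ holds for $k$ large and gives $\rho_c(U,V)<\infty$. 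With that step repaired, the proof is correct.
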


To justify the analogue between $d_{\rm CBM}$ and $d_c$, let us make the following observation. Let $U,V$ be two star-shaped domains of Liouville manifold $(W, \omega, L)$ and $\lambda = \iota_L \omega$. Any compactly supported Hamiltonian isotopy $\phi = \{\phi_t\}_{t \in [0,1]}$ on $W$ admits a lift as a (strict) contact isotopy $\tilde{\phi} = \{\tilde{\phi}_t\}_{t \in [0,1]}$ defined on $W \times S^1$ with respect to the contact 1-form $\lambda + dt$. Explicitly, for any $(x,t) \in W \times S^1$, 
\begin{equation} \label{lift-cont}
\tilde{\phi}_t (x,t) = (\phi_t(x), (t+ f_t(x)) \,\,\mbox{mod}\,\,1) 
\end{equation}
where $f_t$ satisfies $\phi_t^*{\lambda} = \lambda - df_t$. Then we have the following property. 

\begin{prop} \label{prop-cbm-c} Let $U,V$ be star-shaped domains of Liouville manifold $(W, \omega, L)$. Then we have the following relation, 
\[ d_{\rm CBM}(U \times S^1, V \times S^1) \leq d_{c}(U,V).\]
In particular, $d_{\rm CBM}(U \times S^1, V \times S^1) \leq d_{\rm SBM}(U,V)$
 \end{prop}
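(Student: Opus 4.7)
The plan is to convert any compactly supported Hamiltonian isotopy on $W$ witnessing the symplectic ratio $\rho_c(U,V)$ into a compactly supported contact isotopy on $W \times S^1$ witnessing the corresponding contact ratio, using the lift formula (\ref{lift-cont}). First I would record the defining identity for split domains: by the paragraph after Definition \ref{dfn-cont-rescale}, one has
\[
(U \times S^1)/k = \phi_L^{-\ln k}(U) \times S^1, \qquad (V \times S^1)/l = \phi_L^{-\ln l}(V) \times S^1,
\]
so the $W$-factors of the rescaled fiberwise star-shaped domains are exactly the Liouville rescalings that enter the coarse symplectic Banach-Mazur distance $d_c$.

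Next, given $k/l \in \Q_+$ arbitrarily close to $\rho_c(U,V)$ together with a compactly supported Hamiltonian isotopy $\phi = \{\phi_t\}_{t \in [0,1]}$ on $W$ satisfying $\phi_1(\phi_L^{-\ln k}(U)) \subset \phi_L^{-\ln l}(V)$, I would form the contact lift $\tilde{\phi} = \{\tilde{\phi}_t\}_{t \in [0,1]}$ via (\ref{lift-cont}). The key observation is that for any subset $A \subset W$,
\[
\tilde{\phi}_1(A \times S^1) = \phi_1(A) \times S^1,
\]
since the lifted contactomorphism only translates the $S^1$-coordinate by $f_1(x)$, which traces out all of $S^1$ as the basepoint varies. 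Combining this with the identity above yields
\[
\tilde{\phi}_1\bigl((U \times S^1)/k\bigr) = \phi_1(\phi_L^{-\ln k}(U)) \times S^1 \subset \phi_L^{-\ln l}(V) \times S^1 = (V \times S^1)/l,
\]
so $k/l$ is admissible in Definition \ref{dfn-ccbm} for the pair $(U \times S^1, V \times S^1)$. Taking infima gives $\rho_c(U \times S^1, V \times S^1) \leq \rho_c(U, V)$; the same argument with the roles of $U$ and $V$ interchanged gives the corresponding inequality for the reversed ratio, hence $\gamma_{\rm CBM}(U \times S^1, V \times S^1) \leq \gamma_c(U,V)$, and taking logarithms yields the first claim. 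The second assertion then follows from the known comparison $d_c(U,V) \leq d_{\rm SBM}(U,V)$ established in \cite{SZ18}, \cite{Ush18}.

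The only delicate point, and what I expect to be the main obstacle, is checking that $\tilde{\phi}$ is genuinely compactly supported on $W \times S^1$. Outside the support of $\phi_t$ one has $\phi_t^*\lambda = \lambda$, hence $df_t = 0$, so $f_t$ is locally constant there; with the standard normalization making $f_t$ vanish at infinity (available because $W$ is Liouville, hence has one end in the relevant sense), $\tilde{\phi}_t$ restricts to the identity outside a compact subset of $W \times S^1$. This verification is routine but is the one point where care is needed; everything else in the argument is a direct comparison of definitions.
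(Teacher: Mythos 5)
Your proposal is correct and follows essentially the same route as the paper: lift the Hamiltonian isotopy to a contact isotopy via (\ref{lift-cont}), use the split-domain identity $(U\times S^1)/k = \phi_L^{-\ln k}(U)\times S^1$, and observe that the lift carries $A\times S^1$ to $\phi_1(A)\times S^1$. The only step you assert rather than derive is the existence, for $k/l$ arbitrarily close to $e^{d_c(U,V)}$, of a compactly supported Hamiltonian isotopy with $\phi_1(\phi_L^{-\ln k}(U))\subset \phi_L^{-\ln l}(V)$; the paper obtains this from the single-constant definition (\ref{dfn-csbm}) by conjugating with the Liouville flow and approximating $1/C$ from below by rationals $l/k$, a routine manipulation that your write-up should make explicit but which does not affect correctness.
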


\begin{remark} \label{rmk-dc} It is now well-understood that the study of $d_c$ between star-shaped domains of a Liouville manifold is more difficult than the study of $d_{\rm SBM}$ since (filtered) symplectic homology does not apply (\cite{GU19}, \cite{HZ19}). Fortunately, the relation in Proposition \ref{prop-cbm-c} provides a new approach to study $d_c$ via $d_{\rm CBM}$. However, due to Proposition \ref{prop-gamma-0}, in order to obtain a meaningful lower bound of $d_c(U,V)$, we need to consider those Liouville manifolds $W$ with certain constraints. In particular, $d_{\rm CBM}$ is not applicable to the study of $d_c$ between star-shaped domains of $\R^{2n}$ . \end{remark}

The proofs of both Theorem \ref{lemma-ccbm-property} and Proposition \ref{prop-cbm-c} will be presented in subsection \ref{subsec-pf-ccbm}. 

\subsubsection{$d_{\rm CBM}$ between forms}\footnote{This is based on a discussion with Matthias Meiwes.}  \label{subsec-cbm-forms} This subsection discusses another version of the contact Banach-Mazur distance. Different from the set-up in the previous subsection, this comparison takes place between contact 1-forms. Explicitly, let $(M, \xi)$ be a contact manifold, and assume that $\xi$ is co-oriented. This contact structure $\xi$ corresponds to a family of contact 1-forms on $M$. Fix a contact 1-form $\alpha_0$ on $M$, then this family can be written as an orbit space 
\[ O_{\xi}(\alpha_0) = \{e^f \alpha_0 \,| \, f \in C^{\infty}(M, \R) \} : = C^{\infty}(M, \R) \cdot \{\alpha_0\}, \]
where the action of $C^{\infty}(M; \R)$ on the base point $\{\alpha_0\}$ is a multiplication by $e^f$ for any $f \in C^{\infty}(M, \R)$. Recall that any contactomorphism $\phi \in {\rm Cont}(M, \xi)$ satisfies $\phi_* \xi = \xi$. Equivalently, for any $\alpha \in O_{\xi}(\alpha_0)$, $\phi^*\alpha = e^{g_{\phi, \alpha}} \alpha$ where $g_{\phi, \alpha}$ is the conformal factor of $\phi$ with respect to $\alpha$. Define a partial order $\preceq$ on elements in $O_{\xi}(\alpha_0)$ as follows: $\alpha_1 \preceq \alpha_2$ if and only if the corresponding functions $f_1$ and $f_2$ such that $\alpha_1 = e^{f_1} \alpha_0$ and $\alpha_2 = e^{f_2} \alpha_0$ satisfy the relation $f_1 \leq f_2$, that is, $f_1(p) \leq f_2(p)$ for any $p \in M$. For any constant $C \in \R_+$, denote by $C \alpha$ the rescaling $e^{f+ \ln C} \alpha_0$ if $\alpha = e^f \alpha_0$. Then we give the following definition. 

\begin{dfn} \label{dfn-cbm-forms}
For any $\alpha_1, \alpha_2 \in O_{\xi}(\alpha_0)$, define the contact Banach-Mazur distance as  
\[ d_{{\rm CBM}}(\alpha_1, \alpha_2) := \inf \left\{\ln C \geq 0 \,\bigg| \, \exists \phi \in {\rm Cont}_0(M, \xi) \,\,\mbox{s.t.} \,\, \frac{1}{C} \alpha_1 \preceq \phi^*\alpha_2 \preceq C \alpha_1\right\}. \]
\end{dfn}

\begin{remark} Since the input of $d_{\rm CBM}$ speaks itself in an obvious way that which version of contact Banach-Mazur distance is in the discussion, we adopt the same notation as the one in Definition \ref{dfn-ccbm}. \end{remark}

It is easy to check that the space $(O_{\xi}(\alpha_0), d_{\rm CBM})$ is a pseudo-metric space (see Proposition \ref{prop-cbm}) and $d_{\rm CBM}$ is ${\rm Cont}_0(M, \xi)$-invariant, i.e., $d_{\rm CBM}(\phi^*\alpha_1, \psi^*\alpha_2) = d_{\rm CBM}(\alpha_1, \alpha_2)$ for any $\phi, \psi \in {\rm Cont}_0(M, \xi)$. Suppose that $(M, \xi = \ker\alpha_0)$ is closed and Liouville-fillable, that is, there exists a Liouville domain $(W, \omega, L)$ such that $\partial W = M$, $(\iota_L\omega)|_M = \alpha_0$ and the flow of $L$ is complete for $t<0$. In this case, there is a geometric perspective to view any element $\alpha \in O_{\xi}(\alpha_0)$ (cf.~Section 3.2 in \cite{AM19}). Denote by $(\hat{W}, \omega)$ the completion of $(W, \omega, L)$. With respect to $\alpha_0$, $\hat{W}$ admits a canonical decomposition $\hat{W} = SM \sqcup {\rm Core}(W)$ where $SM$ is the symplectization of $(M, \xi = \ker\alpha_0)$ and it can be identified with $\R_+ \times M$, in coordinate $(u, x)$. In this coordinate $W = \{u\leq 1\}$, $M = \{u = 1\}$ and ${\rm Core}(W) = \{u=0\}$. 
Let $\alpha \in O_{\xi}(\alpha_0)$ and assume that $\alpha = e^f \alpha_0$ for some function $f: M \to \R$. Define 
\begin{equation} \label{geo-alpha}
W^{\alpha} := \left\{(u, x) \in \hat{W} \,| \, u < e^{f(x)} \right\}.
\end{equation}
Observe that $W^{\alpha}$ is also a Liouville domain of $(\hat{W}, \omega, L)$. Then we have another main result as follows. 

\begin{theorem} \label{thm-dc-lb} For any $\alpha_1, \alpha_2 \in O_{\xi}(\alpha_0)$, $d_{\rm SBM}(W^{\alpha_1}, W^{\alpha_2}) \leq d_{\rm CBM}(\alpha_1, \alpha_2)$. In particular, $d_{c}(W^{\alpha_1}, W^{\alpha_2}) \leq d_{\rm CBM}(\alpha_1, \alpha_2)$. 
\end{theorem}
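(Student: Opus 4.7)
The strategy is to start from a contactomorphism of $M$ realizing the CBM condition and lift it to a compactly supported Hamiltonian diffeomorphism of the completion $\hat{W}$ realizing the SBM condition at the same scale. Fix $C > \exp(d_{\rm CBM}(\alpha_1, \alpha_2))$ and pick $\phi \in \Cont_0(M, \xi)$ with $\tfrac{1}{C}\alpha_1 \preceq \phi^*\alpha_2 \preceq C \alpha_1$. Writing $\alpha_i = e^{f_i}\alpha_0$ and $\phi^*\alpha_0 = e^{g}\alpha_0$, so that $\phi^*\alpha_2 = e^{f_2\circ\phi + g}\alpha_0$, this reads
\[ f_1 - \ln C \;\leq\; f_2\circ\phi + g \;\leq\; f_1 + \ln C \quad \text{on } M. \]
The natural exact symplectic lift of $\phi$ to $SM \simeq \R_+ \times M$ (the symplectization part of $\hat{W}$, with Liouville form $u\alpha_0$) is $\Phi(u,x) = (e^{-g(x)}u,\, \phi(x))$, and one verifies directly that $\Phi^*(u\alpha_0) = u\alpha_0$.

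A change of variables in the $(u,x)$-coordinates gives
\[ \Phi(W^{\alpha_1}) \;=\; \{(u', x') \in SM \,:\, u' < e^{(f_1 - g)\circ\phi^{-1}(x')}\}. \]
Since the Liouville flow acts on $SM$ by $\phi_L^s(u, x) = (e^s u, x)$, we have $\phi_L^{\pm\ln C}(W^{\alpha_2}) = \{u < e^{\pm\ln C + f_2(x)}\}$. Substituting $y = \phi^{-1}(x')$, the two halves of the inequality above translate directly into
\[ \phi_L^{-\ln C}(W^{\alpha_2}) \;\subset\; \Phi(W^{\alpha_1}) \;\subset\; \phi_L^{\ln C}(W^{\alpha_2}), \]
which is precisely the inclusion chain defining the symplectic Banach--Mazur distance at scale $\ln C$.

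To conclude I must realize these inclusions by a symplectomorphism genuinely defined on all of $\hat{W}$. Since $\phi \in \Cont_0(M,\xi)$, there is a contact isotopy $\{\phi_t\}$ from $\mathds{1}$ to $\phi$ generated by a contact Hamiltonian $H_t$, whose lift to $SM$ is the Hamiltonian flow $\{\Phi_t\}$ of $K_t(u,x) = u H_t(x)$. Multiplying $K_t$ by a smooth cutoff $\chi(u)$ that equals $1$ on a compact interval $[\epsilon_0, U_0] \subset \R_+$ large enough to contain the $u$-support of $W^{\alpha_1}$, of $\phi_L^{\pm\ln C}(W^{\alpha_2})$, and of their images under all $\Phi_t$, and that vanishes near $u=0$ and for $u$ very large, produces a compactly supported Hamiltonian flow on $\hat{W}$ that extends by the identity over $\mathrm{Core}(W)$ and agrees with $\Phi_1$ in the region of interest; its time-$1$ map therefore realizes the inclusion chain above. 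Letting $C \searrow \exp(d_{\rm CBM}(\alpha_1,\alpha_2))$ gives $d_{\rm SBM}(W^{\alpha_1},W^{\alpha_2}) \leq d_{\rm CBM}(\alpha_1,\alpha_2)$, and the inequality for $d_c$ then follows from the general comparison $d_c \leq d_{\rm SBM}$. The main technical obstacle is the cutoff step: one must verify that the entire isotopy $\{\Phi_t\}$ stays inside the region where $\chi \equiv 1$, which follows from the uniform bound on the accumulated conformal factor of $\{\phi_t\}$ along the isotopy, allowing $\epsilon_0$ and $U_0$ to be chosen with a large enough margin.
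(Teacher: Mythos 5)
Your proof follows essentially the same route as the paper: the paper's Lemma \ref{lemma-alpha} computes exactly the lift $\Phi(u,x)=(e^{-g_{\phi,\alpha_0}(x)}u,\phi(x))$ on the region where the cutoff equals $1$, translates the inequality $\tfrac1C\alpha_1\preceq\phi^*\alpha_2\preceq C\alpha_1$ into the same chain of domain inclusions, and then observes that $\Phi$ and $\Phi^{-1}$ restrict to Liouville embeddings whose composition is the inclusion, giving the $d_{\rm SBM}$ bound. The one place where your write-up is imprecise is the cutoff step: you cannot choose $[\epsilon_0,U_0]\subset\R_+$ to ``contain the $u$-support of $W^{\alpha_1}$,'' since $W^{\alpha_1}$ extends all the way down to the core ($u\to0$), so the isotopy cannot stay entirely in the region where $\chi\equiv1$. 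What must actually be checked is that on the transition annulus where $\chi$ interpolates between $0$ and $1$ the inclusions still hold; the paper does this by estimating the $u$-component of the cut-off Hamiltonian vector field and pushing the cutoff close enough to the core that the whole transition region (and its image) lands inside $\bigl\{u<\min_x e^{f_2(x)}\bigr\}$, hence inside all three domains. With that substitution for your justification, the argument is complete; your truncation at large $u$ is harmless since all domains and their images are bounded in $u$ by compactness of $M$.
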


The proof of Theorem \ref{thm-dc-lb} will be provided in subsection \ref{subsec-pf-dc-lb}. We point out that $d_{\rm CBM}$ could vanish identically, without any constraint on $(M, \xi)$. Even though one considers the rescaling $C\alpha_0$ (with the expectation that $d_{\rm CBM}(C \alpha_0, \alpha_0) = \ln C$), the inequality $d_{\rm CBM}(C \alpha_0, \alpha_0)>0$ does not always hold. One easy example is $\R^3$ with $\alpha_0 = \frac{1}{2} (xdy - ydx) + dz$. The rescaling of $\alpha_0$ can be realized by a contactomorphism, which implies that $d_{\rm CBM}(C\alpha_0, \alpha_0) = 0$. This is in contrast with the rescaling property $d_{c}(CU,U) = \ln C$ in the symplectic case. For some cases, this issue can be fixed by Theorem \ref{thm-dc-lb} as demonstrated by the following example.

\begin{ex} \label{ex-nonzero-cbm} Suppose that $(M, \xi = \ker \alpha_0)$ is compact and Liouville-fillable, and $W^{\alpha_0}$ is its filling. Since by definition $d_{\rm SBM}(CU, U) = \ln C$, we know 
\begin{equation} \label{non-trivial-cbm}
\ln C = d_{\rm SBM}(CW^{\alpha_0}, W^{\alpha_0}) = d_{\rm SBM}(W^{C\alpha_0}, W^{\alpha_0}) \leq d_{\rm CBM}(C\alpha_0, \alpha_0) \leq \ln C,
\end{equation}
which implies that $d_{\rm CBM}(C\alpha_0, \alpha_0) = \ln C$. This relation also holds for any $\alpha \in O_{\xi}(\alpha_0)$. In particular, $d_{\rm CBM}$ is not identically zero. More generally, without the hypothesis of being Liouville-fillable (so we only consider the symplectization of $M$), by the compactness of $M$ we know ${\rm Vol}(\{s<1\})< \infty$ where the volume {\rm Vol} is taken inside $(SM, \omega)$. Then, similarly to the argument in (\ref{non-trivial-cbm}), one can also show that $d_{\rm CBM}(C\alpha_0, \alpha_0) = \ln C$ simply by the volume-preserving property of symplectomorphisms. \end{ex}

\subsection{Results on large-scale geometry} Similarly to the main results in \cite{SZ18} and \cite{Ush18}, a driving force of introducing and studying the distances between objects in the contact geometry set-ups is to investigate large-scale geometric properties of the spaces of such objects with respect to the corresponding distances. The general scheme of obtaining such properties is to establish certain stability results with respect to some algebraic obstructions, which can provide large gaps and also serve as the lower bound of the distances in our discussion. Here are our main results in this direction. 

\medskip

For $d_{\rm CBM}$ between domains, we will use the contact shape invariant (\cite{Eli91}) of split domains $U \times S^1$ of $W \times S^1$ to provide a meaningful lower bound of $d_{\rm CBM}$. For two subsets $A, B$ of $\R^n$ containing $0 \in \R^n$, define $\delta(A,B) := \inf \{C>1 \,| \, \frac{1}{C} \cdot A \subset B \subset C \cdot A\}$. Then we have the following stability result.

\begin{theorem} \label{thm-stability} Let $W= T^*\T^n$, and $U = \T^n \times A_U$ and $V = \T^n \times A_V$, where $A_U, A_V$ are subsets of $\R^n$ containing $0 \in \R^n$. Then 
\[  \delta(A_U, A_V) \leq d_{\rm CBM}(U \times S^1, V \times S^1).\]
\end{theorem}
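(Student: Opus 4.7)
The plan is to translate the bound on $d_{\rm CBM}$ into set-theoretic inclusions in $\R^n$ by applying the contact shape invariant to the rescaled split domains.

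First, I would unwind the rescaling operation for split domains. On $W = T^*\T^n$ with the standard Liouville structure, the Liouville vector field acts only in the cotangent fibers by $\phi_L^t(q,p) = (q,e^t p)$, so Definition~\ref{dfn-cont-rescale} gives
\[ (U \times S^1)/k = \T^n \times \tfrac{1}{k} A_U \times S^1, \qquad (V \times S^1)/l = \T^n \times \tfrac{1}{l} A_V \times S^1. \]
Hence any witness to $\rho_c(U \times S^1, V \times S^1) < k/l$ is a compactly supported contact isotopy $\phi$ satisfying
\[ \phi_1\bigl(\T^n \times \tfrac{1}{k} A_U \times S^1\bigr) \subset \T^n \times \tfrac{1}{l} A_V \times S^1, \]
and symmetrically in the other direction.

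The main step is to feed this inclusion into the contact shape invariant of Eliashberg \cite{Eli91}. The key property is that for a split domain $\T^n \times B \times S^1 \subset T^*\T^n \times S^1$ with $0 \in B$, the contact shape recovers precisely the set $B \subset \R^n$, and this shape is monotone under compactly supported contact embeddings. Applied to the inclusion above, monotonicity forces $\tfrac{1}{k} A_U \subset \tfrac{1}{l} A_V$, equivalently $A_U \subset (k/l)\,A_V$. I expect this to be the main technical obstacle: one has to check that the shape of a rescaled split domain is not reduced modulo the integer lattice in $\R^n$ (the hypothesis $0 \in A_U \cap A_V$ ensures the rescaled domains still contain the zero section, so only the component through the origin of the shape is relevant), and that Eliashberg's monotonicity applies uniformly to contact isotopies which may move the $S^1$-factor nontrivially.

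Finally, I would combine the two directions. Fix any $C$ with $\ln C > d_{\rm CBM}(U \times S^1, V \times S^1)$, so that both $\rho_c(U \times S^1, V \times S^1)$ and $\rho_c(V \times S^1, U \times S^1)$ are strictly less than $C$. Choose rationals $k/l, k'/l' < C$ and contact isotopies realizing the inclusions in the definition of $\rho_c$ in each direction. Applying the previous step twice yields $A_U \subset (k/l)\, A_V$ and $A_V \subset (k'/l')\, A_U$, hence
\[ \tfrac{1}{C}\, A_U \subset A_V \subset C\, A_U. \]
By the definition of $\delta$ this forces $\delta(A_U, A_V) \leq C$, and letting $\ln C$ decrease to $d_{\rm CBM}(U \times S^1, V \times S^1)$ gives the asserted stability inequality (read on the logarithmic scale to match the convention of Definition~\ref{dfn-ccbm}).
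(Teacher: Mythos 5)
Your proposal is correct and follows essentially the same route as the paper: unwind the rescaling for split domains, invoke the contact shape invariant's monotonicity under compactly supported contact isotopies (Lemma~\ref{csh-functor}(2)) together with its explicit computation ${\rm csh}(\T^n \times B \times S^1;\tau)=B$ via Sikorav/Gromov (Example~\ref{ex-torus}) to force $\tfrac{1}{k}A_U \subset \tfrac{1}{l}A_V$, and then combine the two directions. The "technical obstacle" you flag is precisely what the paper disposes of in Lemma~\ref{csh-functor} and Example~\ref{ex-torus}.
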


A brief background of contact shape invariant, as well as its key properties in the set-up in this paper, will be recalled and demonstrated in subsection \ref{ssec-csi}. Denote by $\mathcal S(T^*\T^n)$ the set of all the star-shaped domains of $T^*\T^n$. Then Lemma \ref{prop-cbm-c} and Theorem \ref{thm-stability} imply the following large-scale geometric property of the pseudo-metric space $(\mathcal S(T^*\T^n), d_c)$. (cf.~Remark \ref{rmk-dc}). 

\begin{cor} \label{cor-large-scale-torus} If $n \geq 2$, then for any $k \in \N$, there exists an embedding $\Psi_k: (\R^k, |\cdot|_{\infty}) \to (\mathcal S(T^*\T^n), d_c)$ and a constant $C>1$ such that for any $v, w \in \R^k$,
\[ \frac{1}{2}|v- w|_{\infty} \leq d_c(\Psi_k(v), \Psi_k(w)) \leq C |v - w|_{\infty}.\]
In other words, $\Psi_k$ is a quasi-isometric embedding for any $k \in \N$.  
\end{cor}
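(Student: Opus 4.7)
The plan is to embed $\R^k$ into $\mathcal{S}(T^*\T^n)$ by a family of star-shaped domains whose radial profiles encode $k$ independent real parameters along $k$ well-separated radial directions in the cotangent fiber. The hypothesis $n \geq 2$ is used precisely to accommodate arbitrarily many pairwise disjoint rays from the origin of $\R^n$.

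I would fix $R > 0$, choose rays $r_1, \ldots, r_k$ through $0 \in \R^n$, and thicken each $r_i$ to a narrow open cone $C_i$ so that the $C_i$ are pairwise disjoint outside $B_R$. For $v = (v_1, \ldots, v_k) \in \R^k$ define a smooth positive function $\rho_v \colon S^{n-1} \to \R_+$ by $\rho_v(r_i) = R e^{v_i}$, $\rho_v \equiv R$ off $\bigcup_i C_i$, and $\rho_v(\theta) = R e^{f_i(\theta) v_i}$ for $\theta \in C_i$, where $f_i$ is a fixed bump function on $C_i \cap S^{n-1}$ attaining $1$ at $r_i$. Set $A_v := \{\, r\theta : r < \rho_v(\theta)\,\}$, an open star-shaped domain of $\R^n$, and put $\Psi_k(v) := \T^n \times A_v \in \mathcal{S}(T^*\T^n)$.

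For the upper bound, a direct comparison of radial profiles gives $\rho_v(\theta) \leq e^{|v-w|_\infty} \rho_w(\theta)$ for every $\theta$, hence $\Psi_k(v) \subset e^{|v-w|_\infty}\Psi_k(w)$ via the identity Hamiltonian isotopy; by the definition of $d_c$ this yields $d_c(\Psi_k(v), \Psi_k(w)) \leq C|v-w|_\infty$ with any $C \geq 1$. For the lower bound I would invoke the chain
\[
\ln \delta(A_v, A_w) \;\leq\; d_{\rm CBM}\bigl(\Psi_k(v) \times S^1,\, \Psi_k(w) \times S^1\bigr) \;\leq\; d_c(\Psi_k(v), \Psi_k(w)),
\]
whose first inequality is Theorem \ref{thm-stability} and whose second is Proposition \ref{prop-cbm-c}. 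The Euclidean quantity $\delta$ is bounded from below directly on the $k$ rays: if $\frac{1}{C}A_v \subset A_w \subset CA_v$, then evaluating at $r_i$ forces $C \geq e^{|v_i - w_i|}$ for each $i$, so $\ln \delta(A_v, A_w) \geq |v-w|_\infty$, comfortably yielding the claimed factor $\tfrac{1}{2}$.

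The main obstacle I anticipate is arranging the smooth radial profiles $\rho_v$ so that both sides of the argument go through with constants independent of $k$: the reach at each $r_i$ must be controlled exactly for the lower bound (handled by the common-exponential ansatz), while inside each cone the interpolation must remain compatible with Euclidean scaling so that the identity-inclusion upper bound survives (again handled by the ansatz). A secondary check is that $\Psi_k$ is genuinely an embedding, which is immediate once the strict two-sided comparison $d_c(\Psi_k(v), \Psi_k(w)) \asymp |v-w|_\infty$ is in hand.
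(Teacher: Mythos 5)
Your proposal is correct, and it reaches the conclusion by a genuinely simpler construction than the paper's, while relying on the same two key lemmas for the lower bound (Theorem \ref{thm-stability}, whose engine is the Sikorav/Gromov computation of the contact shape in Example \ref{ex-torus}, composed with Proposition \ref{prop-cbm-c}). The paper does not parametrize by $\R^k$ directly: it first folds $\R^k$ quasi-isometrically into $[0,\infty)^{2k}$ by an ``L-shape'' map --- this detour is exactly where the factor $\tfrac12$ in the statement comes from --- and then realizes each point of $[0,\infty)^{2k}$ as a thin thickened $1$-skeleton built from $2k$ non-collinear segments in a $2$-plane of the fiber, with the widths $\ep(v)$ tuned so that all domains have equal volume; the upper bound for $d_c$ then requires the averaging estimate $e^{w_1-v_1}\ep(w)\geq \ep(v)$ to control the varying widths. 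Your exponential radial profiles $\rho_v=Re^{f_i v_i}$ over $k$ disjoint cones absorb negative coordinates directly, make the inclusion $A_v\subset e^{|v-w|_\infty}A_w$ a pointwise comparison of profiles (so the upper bound holds with the identity isotopy and constant $1$), and give $\delta(A_v,A_w)\geq e^{|v-w|_\infty}$ by evaluating along the rays $r_i$; you therefore obtain an isometric embedding, which is strictly sharper than the stated $(\tfrac12, C)$ bounds, and you avoid the volume normalization entirely (it plays no role in the lower bound, since only the reach of $A_v$ in the chosen directions enters $\delta$). One small point in your favor: you correctly write $\ln\delta(A_v,A_w)\leq d_{\rm CBM}$, whereas Theorem \ref{thm-stability} as printed asserts $\delta\leq d_{\rm CBM}$; what its proof actually establishes is $\delta\leq\gamma_{\rm CBM}=e^{d_{\rm CBM}}$, and the paper's own proof of the corollary silently switches to logarithmic units, so your reading is the consistent one.
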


\begin{remark} When $n =1$, the real dimension of $T^*S^1$ is two, which implies that the only symplectic invariant of the star-shaped domains of $T^*S^1$ is just the area. Therefore, the conclusion in Corollary \ref{cor-large-scale-torus} does not hold in this case, and then the hypothesis $n \geq 2$ in Corollary \ref{cor-large-scale-torus} is necessary. \end{remark}

\begin{remark} \label{rmk-hz} Another proof of Corollary \ref{cor-large-scale-torus}, using the symplectic shape invariant (cf.~(\ref{dfn-ssi})), will appear in a forthcoming work \cite{HZ19}, where the coarse symplectic Banach-Mazur distance $d_c$ considered in \cite{HZ19} is slightly more general than the one defined in (\ref{dfn-csbm}). The way that proves Corollary \ref{cor-large-scale-torus} in this paper via Theorem \ref{thm-stability} can be viewed as a refinement of the argument in \cite{HZ19}. As a matter of fact, \cite{HZ19} proves the same conclusions as in Corollary \ref{cor-large-scale-torus} for $T^*(\T^n \times N)$, where $n \geq 2$ and $N$ is any closed manifold. \end{remark}

The proofs of both Theorem \ref{thm-stability} and Corollary \ref{cor-large-scale-torus} will be presented in subsection \ref{ssect-proof-csi}. For $d_{\rm CBM}$ between forms, based on the main results in \cite{SZ18} and \cite{Ush18} where the algebraic obstructions were obtained from persistence modules theory (\cite{PRSZ19}), very different from the application of the contact shape invariant above, we obtain the following result. 

\begin{theorem} \label{cont-large-scale} Let $(M, \xi = \ker \alpha_0)$ be either $(S^{2n-1}, \xi_{\rm std} = \ker \alpha_{\rm std})$ where $n \geq 2$ or $(S^*_g \Sigma_{\geq 1}, \xi_{\rm can} = \ker \alpha_{\rm can})$ where $S^*_g \Sigma_{\geq 1}$ denotes the unit co-sphere bundle over an oriented surface $\Sigma_{\geq 1}$ of genus at least 1. Then for any $N \in \N$, there exists a quasi-isometric embedding from $(\R^N, |\cdot|_{\infty})$ into $(O_{\xi}(\alpha_0), d_{\rm CBM})$ where $\alpha_0 = \alpha_{\rm std}$ or $\alpha_0 = \alpha_{\rm can}$, respectively.\end{theorem}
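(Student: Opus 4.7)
The plan is to combine Theorem \ref{thm-dc-lb}, which controls $d_{\rm CBM}$ between contact forms from below by $d_{\rm SBM}$ between the associated Liouville domains $W^{\alpha}$, with the quasi-isometric embedding results for $d_{\rm SBM}$ already established in \cite{SZ18} and \cite{Ush18}. The point is that the filling of $(S^{2n-1}, \xi_{\rm std})$ is $B^{2n}$, with completion $\C^n$, which is exactly the setting of Usher's quasi-isometric embeddings of $(\R^N, |\cdot|_\infty)$ into $(\mathcal{S}(\C^n), d_{\rm SBM})$ for $n\geq 2$; and the filling of $(S^*_g\Sigma_{\geq 1}, \xi_{\rm can})$ is the unit co-disk bundle, with completion $T^*\Sigma_{\geq 1}$, where the positive growth of symplectic homology yields a quasi-isometric embedding of $(\R^N,|\cdot|_\infty)$ into $(\mathcal{S}(T^*\Sigma_{\geq 1}), d_{\rm SBM})$ via \cite{SZ18}.

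For the setup, I would first choose a family of smooth functions $\{f_v : M \to \R\}_{v \in \R^N}$ such that the star-shaped domains $W^{\alpha_v}$, with $\alpha_v = e^{f_v}\alpha_0$, are precisely (or can be continuously reparametrized to) the family of star-shaped domains in $\hat W$ realizing the quasi-isometric embedding from \cite{SZ18} or \cite{Ush18}. Since those constructions can be arranged so that the defining functions are graph-like over $M$ and depend Lipschitz-continuously on $v$, I would further demand the uniform Lipschitz estimate $\|f_v - f_w\|_\infty \leq K|v-w|_\infty$.

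The upper bound is immediate from Definition \ref{dfn-cbm-forms} by taking $\phi = \mathds{1}$: the condition $\frac{1}{C}\alpha_v \preceq \alpha_w \preceq C\alpha_v$ is equivalent to $\|f_v - f_w\|_\infty \leq \ln C$, so
\[
d_{\rm CBM}(\alpha_v, \alpha_w) \leq \|f_v - f_w\|_\infty \leq K|v-w|_\infty.
\]
For the lower bound, Theorem \ref{thm-dc-lb} gives $d_{\rm CBM}(\alpha_v,\alpha_w) \geq d_{\rm SBM}(W^{\alpha_v}, W^{\alpha_w})$, and by the chosen family the right-hand side is bounded below by $c|v-w|_\infty$ for a constant $c>0$. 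Combining these, $v \mapsto \alpha_v$ is the desired quasi-isometric embedding.

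The main obstacle is Step 1: verifying that the star-shaped domains used in the QI embeddings of \cite{SZ18} and \cite{Ush18} can be realized (or deformed through a Lipschitz family) as graphs of the form $\{u < e^{f_v(x)}\}$ over our specific contact boundary $(M,\xi)$, so that they arise as $W^{\alpha_v}$ for contact forms in $O_\xi(\alpha_0)$. In both references the perturbations are supported near the Liouville boundary (localized around Reeb orbits in the sphere case, and around closed geodesics in the cotangent bundle case), so after pushing forward by the Liouville flow they do sit in a collar of $M$ and can be rewritten in the graph form $W^{\alpha_v}$. Checking that this translation preserves both the Lipschitz dependence of $f_v$ on $v$ and the lower bound furnished by the persistence-module invariants used in \cite{SZ18} and \cite{Ush18} is the technical heart of the argument; once it is in place, the theorem follows by assembling the two bounds above.
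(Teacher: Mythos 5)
Your proposal is correct and follows essentially the same route as the paper: the paper's proof is a one-line combination of Theorem \ref{thm-dc-lb} with the quasi-isometric embedding results of \cite{Ush18} (Theorem 1.5) and \cite{SZ18} (Theorem 1.11), exactly as you propose. You are in fact more careful than the paper, since you explicitly flag the need to realize the domains from those references in the graph form $W^{\alpha_v}$ and to get the Lipschitz upper bound via $\phi = \mathds{1}$ — points the paper's proof leaves implicit.
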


\begin{proof} Since both $(S^{2n-1}, \xi = \ker \alpha_{\rm std})$ and $(S^*_g \Sigma_{\geq 1}, \xi = \ker \alpha_{\rm can})$ are compact and Liouville-fillable, the desired conclusion directly comes from Theorem 1.5 in \cite{Ush18} and Theorem 1.11 in \cite{SZ18} for two different situations in the hypothesis respectively, together with the stability result Theorem \ref{thm-dc-lb} above. \end{proof}

\begin{remark} On the one hand, the conclusion in Theorem \ref{cont-large-scale} seems not surprising since as a set the space $O_{\xi}(\alpha_0)$ is identified with the space $C^{\infty}(M, \R)$, which tends to be large. On the other hand, the condition in the definition of $d_{\rm CBM}$ in Definition \ref{dfn-cbm-forms} can be rewritten in terms of functions. Explicitly, assume that $\alpha_1 = e^{f_1} \alpha_0$ and $\alpha_2 = e^{f_2} \alpha_0$, then it is easy to check that the relation $(1/C) \alpha_1 \preceq \phi^*\alpha_2 \preceq C\alpha_1$ is equivalent to 
\begin{equation} \label{cbm-fcn}
f_1 - \ln C \leq f_2 \circ \phi + g_{\phi, \alpha_0} \leq f_1 + \ln C,
\end{equation}
where $g_{\phi, \alpha_0}$ is the conformal factor of $\phi$ with respect to $\alpha_0$. Due to the existence of $g_{\phi, \alpha_0}$ in the comparison (\ref{cbm-fcn}) which depends on $\phi$, seeking for the optimal $C$ in (\ref{cbm-fcn}), i.e., the value of $d_{\rm CBM}$, becomes non-trivial at all. \end{remark} 

\subsection{Special properties of contact isotopies} There are two more special parts in this paper, which take advantages of two particular properties of contact isotopies. (1) Fix a contact 1-form, then a contact isotopy admits a unique contact Hamiltonian function. Based on the construction in \cite{EP00} and \cite{EKP06}, this contact Hamiltonian function can be used to transfer this contact isotopy to a fiberwise star-shaped domain $U \subset W \times S^1$, which lies exactly in our interest from the discussion above. (2) A contact isotopy of a contact manifold $M$ lifts to a 1-homogenous symplectic Hamiltonian isotopy of the symplectization of $M$. When $M = S_g^*X$, the unit co-sphere bundle of a manifold $X$, the groundbreaking work in \cite{GKS12} associates to such a 1-homogenous symplectic Hamiltonian isotopy an element in the derived category of sheaves of modules over $X \times X \times I$, called the sheaf quantization. The following two subsections will discuss several related results in these two directions. 

\subsubsection{From dynamics to geometry} \label{dyn-geo} Let $(M, \xi)$ be a closed contact manifold. Recall that $\overrightarrow{G} = {\rm PCont}_+(M, \xi)/{\sim}$ where $\phi \sim \psi$ if and only if $\phi_1 = \psi_1$ and they are homotopic with fixed endpoints through positive paths in ${\rm PCont}_+(M, \xi)$. Consider an element $[\phi] \in \overrightarrow{G}$. Due to Lemma 3.1.A in \cite{EP00}, there exists a representative of the class $[\phi]$ which is 1-periodic and denote by $\phi = \{\phi_t\}_{t \in S^1}$ where $S^1 = \R/\Z$. Fix a contact 1-form $\alpha$, and denote by $h(t,x): S^1 \times M \to \R$ the contact Hamiltonian function of $\phi$. From the discussion above, it lifts to a 1-homogenous (symplectic) Hamiltonian function $H(s,t,x) = s\cdot h(t,x)$ on symplectization $SM \times S^1$. Assume that $(M, \xi)$ is Liouville-fillable and denote by $W$ its filling, and by $\hat{W}$ the completion of $W$. Suggested by \cite{EKP06}, consider the following subset of $\hat{W} \times S^1$, 
\begin{equation} \label{geo-ss}
U(\phi) = \{(s, t, x) \in SM \times S^1 \,| \, H(s,t,x) <1 \} \cup ({\rm Core}(\hat{W}) \times S^1).
\end{equation}
It is easy to check that $U(\phi)$ is a star-shaped domain of Liouville manifold $\hat{W} \times S^1$. More explicitly, since $h(t,x)>0$ for any $(t,x) \in S^1 \times M$, due to the compactness of $M$, there exist a global minimum and a global maximum of $h(t,x)$, denote by $m_-$ and $m_+$, respectively. Both $m_-$ and $m_+$ are positive. The condition in (\ref{geo-ss}) $H(s,t,x)<1$ is equivalent to $h(t,x) < \frac{1}{s}$. Therefore, if $s \geq \frac{1}{m_-}$, then $U(\phi)|_{s} = \emptyset$, and if $s \leq \frac{1}{m_+}$, the $U(\phi)|_{s} =  S^1 \times M$. One standard example of this construction is that $M = S^{2n-1}$, the standard sphere of $\R^{2n}$ and $\phi = \{\phi_t\}_{t \in [0,1]} = \{e^{2\pi i t} z\}_{t\in [0,1]}$ the 1-periodic Reeb flow on $S^{2n-1}$. The corresponding contact Hamiltonian function is $\pi|z|^2$, and then $U(\phi) = B^{2n}(1) \times S^1 \subset \R^{2n} \times S^1$. One can modify the speed of the Reeb flow to adjust the radius of the ball. One important observation is that an autonomous positive contact isotopy, i.e., the contact Hamiltonian function is independent of $t \in S^1$, corresponds to a split fiberwise star-shaped domain of $\hat{W} \times S^1$. 

Due to Lemma 1.21 in \cite{EKP06}, the construction in (\ref{geo-ss}) is in fact well-defined for the class $[\phi] \in \overrightarrow{G}$, up to an ambient contactomorphism in ${\rm Cont}_0({\hat W} \times S^1)$. We use the notation $U([\phi])$ to denote the equivalence class of the fiberwise star-shaped domains in ${\hat W} \times S^1$ up to contactomorphisms. Suppose that $(M, \xi)$ is orientable. Given two $[\phi], [\psi] \in \overrightarrow{G}$, on the one hand, due to (\ref{G1-prop}) and definition (\ref{dfn-rgr}), we can compare them via the relative growth rate $\gamma_{\geq_+}([\phi], [\psi])$ with respect to the bi-invariant partial order $\geq_+$ defined in (\ref{order-2}); on the other hand, by passing to the corresponding fiberwise star-shaped domains in ${\hat W} \times S^1$ defined in (\ref{geo-ss}), one can also compare them via the contact Banach-Mazur distance in Definition \ref{dfn-ccbm}, i.e., $d_{\rm CBM}(U([\phi]), U([\psi]))$. Due to the second conclusion of Proposition \ref{lemma-ccbm-property}, the value $d_{\rm CBM}(U([\phi]), U([\psi]))$ is well-defined. The following theorem shows that these two approaches are related, and its proof will be given in subsection \ref{subsec-pf-rgr-cbm}. 

\begin{theorem} \label{rgr-cbm}
Let $(M, \xi)$ be a compact and Liouville-fillable contact manifold, and $\hat{W}$ be the completion of its filling. Suppose that $\mathcal N_{\hat{W} \times S^1} \neq \emptyset$ and denote by $\overrightarrow{G}$ the semi-group of the equivalence classes of positive contact isotopies on $(M, \xi)$. Then for any $[\phi], [\psi] \in  \overrightarrow{G}$, we have 
\[ d_{\geq_+}([\phi], [\psi]) \geq d_{\rm CBM}(U([\phi]), U([\psi])),\]
where $[\phi], [\psi]$ are the equivalence classes of fiberwise star-shaped domains in $\mathcal N_{\hat W \times S^1}$ up to contactomorphisms. 
\end{theorem}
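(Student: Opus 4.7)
My plan is to reduce the inequality to a one-sided comparison of growth rates. Since both $d_{\geq_+}$ and $d_{\rm CBM}$ are logarithms of symmetrized growth rates, it suffices to prove $\rho^+_{\geq_+}([\phi],[\psi]) \geq \rho_c(U([\phi]), U([\psi]))$ and then swap the roles of $[\phi]$ and $[\psi]$. So fix $k/l \in \Q_+$ with $[\phi]^k \geq_+ [\psi]^l$; the task is to produce a compactly supported contact isotopy of $\hat W \times S^1$ whose time-1 map sends $U([\phi])/k$ into $U([\psi])/l$.

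The first ingredient identifies the $k$-th semi-group power with the rescaling, namely that $U([\phi]^k)$ is related to $U([\phi])/k$ by a compactly supported element of ${\rm Cont}_0(\hat W \times S^1)$. Pick via Lemma 3.1.A of \cite{EP00} a 1-periodic representative $\phi_t$ of $[\phi]$ with contact Hamiltonian $h \colon S^1 \times M \to \R_{>0}$, and take as a representative of $[\phi]^k$ the reparametrized iterate $\tilde\phi_t := \phi_{kt}$, whose contact Hamiltonian is $\tilde h(t, x) = k\, h(kt, x)$. Since the Liouville flow acts on the symplectization by $\phi_L^{\ln k}(s, x) = (ks, x)$, the defining formula (\ref{geo-ss}) gives $\tau_k^{-1}(U(\phi)) = U(\tilde\phi)$ as subsets of $\hat W \times S^1$; Lemma 1.21 of \cite{EKP06} then upgrades this to the claimed contactomorphism on the level of classes.

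The second and main ingredient is that $[\phi] \geq_+ [\psi]$ yields a compactly supported contact isotopy of $\hat W \times S^1$ sending $U([\phi])$ into $U([\psi])$. Writing $[\phi] = [\psi][\theta]$ with $[\theta]$ positive, choose representatives so that $\phi_t = \psi_t \theta_t$. Differentiating and using that $\psi_t^*\alpha = e^{g_t}\alpha$ yields the product rule for contact Hamiltonians,
\[ h_\phi(t, z) = h_\psi(t, z) + e^{g_t(\psi_t^{-1}(z))}\, h_\theta(t, \psi_t^{-1}(z)). \]
Since $h_\theta > 0$ and the exponential factor is positive, $h_\phi > h_\psi$ pointwise, which by (\ref{geo-ss}) forces the plain inclusion $U(\phi) \subset U(\psi)$. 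Lemma 1.21 of \cite{EKP06} then lifts this inclusion to the level of classes through compactly supported contactomorphisms. Applying the second ingredient to $[\phi]^k \geq_+ [\psi]^l$ and combining with the first, one obtains a compactly supported contact isotopy sending $U([\phi])/k$ into $U([\psi])/l$; hence $\rho_c(U([\phi]), U([\psi])) \leq k/l$. Infimizing over admissible pairs $(k,l)$ and symmetrizing completes the proof.

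The main obstacle will be verifying that the contactomorphisms produced by Lemma 1.21 of \cite{EKP06} can be arranged to be compactly supported. This should follow from the uniform fiberwise boundedness of the relevant domains: each $U([\phi])$ sits inside $W \times S^1 \cup \{s \leq 1/\min h\} \times M \times S^1 \subset \hat W \times S^1$, so the positive-homotopy constructions needed to pass between representatives can be truncated outside a sufficiently large bounded region without altering their effect on the domains. A related subtlety is that, within the class $[\phi]^k$, the pointwise semi-group power $\{\phi_t^k\}$ should agree with the reparametrized iterate $\{\phi_{kt}\}$ used above; for 1-periodic $\phi_t$ both paths share the endpoint $\phi_1^k = \phi_k$, and a standard Eckmann--Hilton-type interpolation carried out through positive paths connects them in $\overrightarrow G$.
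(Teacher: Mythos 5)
Your proposal is correct and follows essentially the same route as the paper's proof: identify $U([\phi]^k)$ with $U([\phi])/k$ via the covering map $\tau_k$, derive the pointwise inequality of contact Hamiltonians from the decomposition $[\phi]^k=[\psi]^l[\theta]$ using the product rule, deduce the domain inclusion, and invoke Lemma 1.21 of \cite{EKP06} to pass between homotopic positive representatives by ambient contactomorphisms. In fact you are somewhat more careful than the paper on two points it glosses over (the compact support of the ambient isotopies and the positive homotopy between the pointwise power $\{\phi_t^k\}$ and the reparametrized iterate $\{\phi_{kt}\}$), and your one-sided reduction via $\rho^+_{\geq_+}$ has the index bookkeeping in the consistent direction.
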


\begin{remark} Consider the following subset of $\overrightarrow{G}$, 
\[ \overrightarrow{A} : = \{[\phi] \in \overrightarrow{G} \,| \, \mbox{$\phi$ is an autonomous contact isotopy}\}. \]

(1) For any $[\phi] \in \overrightarrow{A}$, as mentioned above, $U([\phi])$ is a split domain up to a contactomorphism of ${\hat W} \times S^1$. Then the right hand side of the conclusion in Theorem \ref{rgr-cbm} reduces to $d_{\rm CBM}(U \times S^1, V \times S^1)$ for some star-shaped domains $U, V$ of $\hat{W}$. Then, for certain contact manifolds, e.g., $M = S_g^*(\T^n \times N)$ with $n \geq 2$ and any closed manifold $N$, the proof of Corollary \ref{cor-large-scale-torus} (also see Remark \ref{rmk-hz}) implies a large-scale geometric property of $(\overrightarrow{A}, d_{\geq_+})$, and then also of $(\overrightarrow{G}, d_{\geq_+})$. More explicitly, both $(\overrightarrow{A}, d_{\geq_+})$ and $(\overrightarrow{G}, d_{\geq_+})$ contain an image of a quasi-isometric embedding of an arbitrarily large dimensional Euclidean space. In the case where $M = S_g^*\T^n$ with $n \geq 2$, a large-scale geometric property of $(\overrightarrow{A}, d_{\geq_+})$ can also be obtained by Theorem 1.7.F in \cite{EP00} together with Corollary \ref{cor-po+} above. It seems to us that these two approaches do not coincide, essentially because the way that we use the contact shape invariant in this paper is different from \cite{EP00}. 

(2) The way that we obtain the large-scale geometric property of $(\overrightarrow{A}, d_{\geq_+})$ elaborated in (1) above can be regarded as an analogue of the proof of the main result, Theorem 1.1, in Usher's work \cite{Ush13}, where it proves that, when the symplectic manifold $(M, \omega)$ satisfies a certain condition, the group ${\rm Ham}(M, \omega)$ contains an image of a quasi-isometric embedding of an infinite-dimensional Euclidean space. From the construction of the quasi-isometric embedding in \cite{Ush13}, this large-scale geometric property in fact holds for the subset of all the autonomous Hamiltonian diffeomorphisms (i.e., generated by autonomous Hamiltonian functions). 

On the other hand, for certain symplectic manifolds $(M, \omega)$, Polterovich-Shelukhin's work \cite{PS16} (and also \cite{Zha19}) provides another method that obtains a large-scale geometric property of ${\rm Ham}(M, \omega)$, which goes beyond the scope of automorphism Hamiltonian diffeomorphisms. More explicitly, it finds a sequence of non-autonomous Hamiltonian diffeomorphisms whose Hofer distances $d_{\rm Hofer}$ from the set of autonomous Hamiltonian diffeomorphisms go to infinity. Therefore, an analogue and also interesting question in our contact set-up is whether there exists a sequence of non-split equivalence classes of fiberwise star-shaped domains in $\hat{W} \times S^1$ (i.e., not a split domain up to any contactomorphisms) such that their contact Banach-Mazur distances $d_{\rm CBM}$ from the set of all split equivalence classes can be large. To this end, one needs to understand how to use obstructions (for instance, the contact shape invariant applied in this paper) to effectively distinguish non-split and split equivalence classes. This certainly deserves some further development. 
\end{remark}

\subsubsection{Algebraic distance between sheaves} Let $\k$ be a fixed ground field, and $X$ be a closed manifold. Denote by $\D^b(\k_{X \times X \times I})$ the bounded derived category of sheaves of $\k$-modules over $X \times X \times I$ where $I$ is an interval of $\R$ containing $0$. The sheaf convolution denoted by $``\circ|_I"$ (see (1.13) in \cite{GKS12} or subsection \ref{subsec-pf-prop-sheaf-order} below) provides a well-defined bi-operator on $\D^b(\k_{X \times X \times I})$. Moreover, the derived category $\D^b(\k_{X \times X \times I})$ contains a subcategory denoted by $\D_{\rm adm}^b(\k_{X \times X \times I})$ consisting of those objects which, roughly speaking, admit its inverse under the sheaf convolution. Recall that the singular support of an element $\F \in \D^b(\k_{X \times X \times I})$, denoted by $SS(\F)$, is a conical subset of $T^*(X \times X \times I) = T^*X \times T^*X \times T^*I$. We will be mainly interested in the $T^*I$-component where $(r,\tau)$ denotes the coordinates. We use the notation $T^*_{\{\tau \leq 0\}} (X \times X \times I)$ to denote the subset of $T^*(X \times X \times I)$ where the co-vector $\tau$ satisfies the condition that $\tau \leq 0$.

\begin{dfn} \label{sheaf-order} For $\F, \G \in \D^b(\k_{X \times X \times I})$, define the relation $\geq_s$ between $\F$ and $\G$ by 
\[ \F \geq_s \G \,\,\,\,\mbox{if and only if} \,\,\,\, SS(\F^{-1} \circ|_I \G) \subset T^*_{\{\tau \leq 0\}} (X \times X \times I).\]
We call $\F$ a {\it dominant} if, for any $\G \in \D^b(\k_{X \times X \times I})$, there exists some $k \in \N$ (depending on $\G$) such that such that $\F^k \geq_s G$, where $\F^k: =\F \circ |_I \circ \cdots \circ|_I \circ \F$ for $k$ many $\F$. The set of all the dominants in $\D^b(\k_{X \times X \times I})$ is denoted by $\D_{\rm dom}^b(\k_{X \times X \times I})$.
\end{dfn}

For brevity, denote by $\D_{{\rm adm} \cap {\rm dom}}^b(\k_{X \times X \times I}) : = \D_{\rm adm}^b(\k_{X \times X \times I}) \cap \D_{\rm dom}^b(\k_{X \times X \times I})$. Recall that ${\rm PCont}_+(M, \xi)$ denotes the set of all the positive path of contactomorphisms of $(M, \xi)$ starting at $\mathds{1}$. Then we have the following main result, and its proof will be given in subsection \ref{subsec-pf-prop-sheaf-order}.

\begin{theorem} \label{prop-sheaf-order} The relation $\geq_s$ defined in Definition \ref{sheaf-order} is a partial order on the subcategory $\D_{\rm adm}^b(\k_{X \times X \times I})$. Moreover, if $M = S_g^*X$, the unit co-sphere bundle with the canonical contact structure $\xi$, then there exists a monotone decreasing embedding 
\begin{equation} \label{emb-sheaf}
\sigma: ({\rm PCont}_+(M, \xi), d_{\geq_o}) \to (\D_{{\rm adm} \cap {\rm dom}}^b(\k_{X \times X \times I}), d_{\geq s}),
\end{equation}
where $d_{\geq_o}$ and $d_{\geq_s}$ are the pseudo-metrics induced by the partial order $\geq_o$ and $\geq_s$ respectively via the relative growth rate as constructed in (\ref{dfn-pm}). 
\end{theorem}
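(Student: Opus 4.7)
The proof splits into two parts: showing $\geq_s$ is a partial order, and constructing the embedding $\sigma$. For the first part, I would verify the three properties directly using standard microlocal tools. Reflexivity is immediate: on $\D^b_{\rm adm}(\k_{X \times X \times I})$, $\F^{-1} \circ|_I \F$ is the identity object $\k_{\Delta_X \times I}$ of the convolution, whose singular support lies in the zero section and in particular in $T^*_{\{\tau \leq 0\}}(X \times X \times I)$. For transitivity, associativity gives $\F^{-1} \circ|_I \H \cong (\F^{-1} \circ|_I \G) \circ|_I (\G^{-1} \circ|_I \H)$, and the Kashiwara--Schapira estimate on the singular support of a convolution (as recalled in Proposition~1.11 of \cite{GKS12}) shows that convolving two sheaves whose singular supports have $\tau \leq 0$ produces a sheaf with the same property. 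For antisymmetry, suppose $\F \geq_s \G$ and $\G \geq_s \F$; set $\K := \F^{-1} \circ|_I \G$, so $\K^{-1} \cong \G^{-1} \circ|_I \F$. Because the involution ``inverse under $\circ|_I$'' flips the sign of the $\tau$-coordinate of the singular support, both hypotheses together force $SS(\K) \subset \{\tau = 0\}$. The microlocal propagation theorem then shows that $\K$ is locally constant along $I$, and combining with $\K|_{t=0} \cong \k_{\Delta_X}$ (the convolution identity restricted to $t=0$) yields $\K \cong \k_{\Delta_X \times I}$, so $\F \cong \G$.

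For the second part, I would invoke the sheaf quantization of Guillermou--Kashiwara--Schapira. A positive contact isotopy $\phi = \{\phi_t\}_{t \in [0,1]}$ on $M = S_g^*X$ lifts uniquely to an $\R_{>0}$-equivariant Hamiltonian isotopy $\widehat\phi$ on $T^*X \setminus 0_X$ generated by a positive $1$-homogeneous Hamiltonian, and the main theorem of \cite{GKS12} produces a unique $K(\widehat\phi) \in \D^b(\k_{X \times X \times I})$ satisfying $K(\widehat\phi)|_{t=0} \cong \k_{\Delta_X}$ and whose singular support away from the zero section is the conic Lagrangian swept out by the graph of $\widehat\phi$. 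I set $\sigma(\phi) := K(\widehat\phi)$ and verify: (i) $\sigma(\phi) \in \D^b_{\rm adm}$, using the sheaf quantization of the time-reversed isotopy as the two-sided inverse under $\circ|_I$; (ii) $\sigma$ is a semigroup morphism $\sigma(\phi \cdot \psi) \cong \sigma(\phi) \circ|_I \sigma(\psi)$, which is part of the GKS functoriality; (iii) for $\phi$ positive, $\sigma(\phi)$ is dominant, which reduces via (ii) and (iv) below to the known dominance of positive elements in $\widetilde{\rm Cont}_0(M,\xi)$ (on compact $M$, a positive contact Hamiltonian is bounded below by a positive constant, so a sufficiently large multiple dominates any other contact Hamiltonian); (iv) if $\phi \geq_o \psi$, then $\phi \psi^{-1}$ is generated by a non-negative contact Hamiltonian, whose $1$-homogeneous lift is non-negative, and the explicit singular-support description shows that the $\tau$-component of $SS(\sigma(\phi) \circ|_I \sigma(\psi)^{-1})$ is non-positive, i.e., $\sigma(\phi) \geq_s \sigma(\psi)$.

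Claims (i)--(iv) together show that $\sigma$ is a monotone semigroup homomorphism into $\D^b_{{\rm adm} \cap {\rm dom}}(\k_{X \times X \times I})$. A formal property of the relative growth rate construction (\ref{dfn-rgr})--(\ref{dfn-pm}) is that any monotone semigroup homomorphism between partially ordered semigroups is non-expansive in the induced pseudo-metrics: the proof is a direct comparison of the infima defining $\rho^+_\geq$. Applied to $\sigma$, this gives $d_{\geq_o}(\phi, \psi) \geq d_{\geq_s}(\sigma(\phi), \sigma(\psi))$, which is the required monotone-decreasing embedding inequality. The main obstacle I anticipate is the antisymmetry step: the passage from $SS(\K) \subset \{\tau = 0\}$ to ``$\K$ is locally constant in $t$'' requires a careful invocation of the microlocal propagation theorem on $I$, and the final identification with the convolution identity uses the uniqueness part of GKS. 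A secondary difficulty is aligning sign conventions throughout (iv), so that positivity of the contact Hamiltonian translates correctly into the sign of $\tau$ in the singular support of the sheaf quantization.
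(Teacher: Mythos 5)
Your proposal is correct and follows essentially the same route as the paper: reflexivity via the identity convolution $\k_{\Delta \times I}$, transitivity via the singular-support estimate for $\circ|_I$ and additivity of the $\tau$-components, antisymmetry by forcing $SS(\F^{-1}\circ|_I\G)\subset\{\tau=0\}$ and then using propagation along $I$ together with the normalization $\F|_{t=0}\simeq\k_{\Delta}$, and the embedding via the GKS sheaf quantization combined with the formal non-expansiveness of relative growth rates under monotone semigroup homomorphisms. The only cosmetic slip is that $SS(\k_{\Delta\times I})$ is the conormal to $\Delta\times I$ rather than the zero section, but its $\tau$-component still vanishes, so reflexivity is unaffected.
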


Not surprisingly, the embedding in Theorem \ref{prop-sheaf-order} is given by the sheaf quantization from \cite{GKS12} or Example \ref{ex-sq} in subsection \ref{subsec-pf-prop-sheaf-order}, which associates to a contact isotopy $\phi$ a unique (up to isomorphism) element $\K_{\phi} \in \D_{\rm adm}^b(\k_{X \times X \times I})$, but the proof that the relation $\geq_s$ defined in Definition \ref{sheaf-order} is a partial order heavily depends on certain particular properties of singular supports (see Proposition \ref{prop-ss-1} and Proposition \ref{prop-ss-2}). 

\medskip

\noindent {\bf Discussion.} Let us end this subsection with the following discussion. Let $I = [0,1]$. By Proposition 4.3 in \cite{Zha20}, when $M = S_g^*X$ for some metric $g$ on $X$, there exists a well-defined embedding 
\begin{equation} \label{emb-sheaf-2}
[\sigma]: \widetilde{{\rm Cont}}_0(M, \xi) \to \D_{{\rm adm}}^b(\k_{X \times X \times I})/\sim,
\end{equation} 
which can be regarded as a homotopy version of the embedding in (\ref{emb-sheaf}), on the level of sets instead of the pseudo-metric spaces. Here, for $\F, \G \in \D_{{\rm adm}}^b(\k_{X \times X \times I})$, the equivalence relation $\sim$ between $\F$ and $\G$ is defined as follows: $\F \sim \G$ if and only if (i) $\F|_{t=1} \simeq \G|_{t=1}$ and (ii) there exists $\Theta \in \D_{{\rm adm}}^b(\k_{X \times X \times I \times I})$, where the $t$ denotes the coordinate of the first $I$ and $s$ denotes the coordinate of the second $I$, such that $\Theta|_{s=0} = \F$ and $\Theta|_{s=1} = \G$. We call {\it $\F$ is homotopic to $\G$} if the second condition (ii) is satisfies. What Proposition 4.3 in \cite{Zha20} essentially proves is that if $\phi$ is homotopic to $\psi$ through contact isotopies, then,  $\K_{\phi}|_{t=1} = \K_{\psi}|_{t=1}$, and there exists a homotopy of sheaves as above in $\D_{\rm adm}^b(\k_{X \times X \times I})$ from $\K_{\phi}$ to $\K_{\psi}$. In other words, $\K_{\phi} \sim K_{\psi}$. The proof of this is similar to the proof of the uniqueness of the sheaf quantization, which eventually comes from the fact that $I$ is contractible (so $I \times I$ is also contractible). The embedding in (\ref{emb-sheaf-2}) is then given by $\sigma: [\phi] \mapsto [\K_{\phi}]$.

Let $M = S_g^*X$ and $\xi_{\rm can}$ be the canonical contact structure on $M$. We call the subcategory {\it $\D_{{\rm adm}}^b(\k_{X \times X \times I})/\sim$ orderable} if the partial order $\geq_s$ descends to a well-defined partial order to $\D_{{\rm adm}}^b(\k_{X \times X \times I})/\sim$. The following glossary summarizes the correspondences between the elements from contact geometry and the elements from derived category. 

\medskip

\begin{center}
\begin{tabular}{ P{1em} | P{5cm} | P{5.5cm}  } 
\hline
& C (= contact geometry) & D (= derived category) \\ 
\hline
1 & $\phi$, \,\,$\phi^{-1}$, \,\,$\mathds{1}$ & $\K_{\phi}$, \,\,$\K_{\phi}^{-1}$, \,\,$\k_{\Delta \times I}$ \\ 
\hline
2 & $\phi \sim \psi$ & $\K_{\phi} \sim \K_{\psi}$ \\ 
\hline
3& ${\rm PCont}_0(M,\xi)$ & $\D_{{\rm adm}}^b(\k_{X \times X \times I})$ \\ 
\hline
4& ${\rm PCont_+}(M,\xi)$ & $\D_{{\rm adm} \cap {\rm dom}}^b(\k_{X \times X \times I})$ \\ 
\hline
5& $\widetilde{{\rm Cont}}_0(M, \xi)$ & $\D_{{\rm adm}}^b(\k_{X \times X \times I})/\sim$ \\ 
\hline
6& \mbox{$\geq_o$ on ${\rm PCont}_0(M,\xi)$} & \mbox{$\geq_s$ on $\D_{{\rm adm} \cap {\rm dom}}^b(\k_{X \times X \times I})$}  \\ 
\hline
7& \mbox{$(M, \xi_{\rm can})$ is orderable}  & \mbox{$\D_{{\rm adm}}^b(\k_{X \times X \times I})/\sim$ is orderable} \\ 
\hline
8& any non-negative contractible loop of contactomorphisms of $(M, \xi_{\rm can})$ is trivial & Conjecture \ref{conj-anti-sym} \\ 
\hline
\end{tabular}
\end{center}

\medskip

Some explanations are in order. The correspondence between 1C and 1D are given by the sheaf quantization from \cite{GKS12}, which yields an embedding from 3C to 3D (as well as an embedding from 4C to 4D as in Theorem \ref{prop-sheaf-order}). Meanwhile, as elaborated above, 2C implies 2D due to Proposition 4.3 in \cite{Zha20}, which yields an embedding from 5C to 5D as in (\ref{emb-sheaf-2}). The partial order $\geq_o$ in 6C is the standard partial 
order defined in \cite{EP00}, which will be elaborated in details in subsection \ref{subsec-pf-g1-prop}, while the relation $\geq_s$ in 6D is the partial order defined and confirmed in Definition \ref{sheaf-order} and Theorem \ref{prop-sheaf-order}, respectively. Due to Proposition 2.1.A in \cite{EP00}, 7C and 8C are equivalent. The 8D, formulated as Conjecture \ref{conj-anti-sym} below, can be viewed as a sheaf-version of 8C. Moreover, following the same idea as in the proof of Proposition 2.1.A in \cite{EP00}, one can easily show that 7D and 8D are equivalent. As a matter of fact, since $M = S^*_gX$, \cite{CN10} proves that 7C indeed holds (thus 8C also holds). Proving 7C is not directly via 8C; instead, the proof in \cite{CN10} requires more advanced machinery. On the other hand, it is unknown to us that the corresponding 7D (thus also 8D) holds. Since \cite{GKS12} also proves 7C from a perspective of the sheaf quantization, we expect that the proof of Theorem 4.13 in \cite{GKS12}, in particular some homotopy version of its Proposition 4.8, can be helpful.

\begin{conj} \label{conj-anti-sym} Suppose that $\F \in \D_{{\rm adm}}^b(\k_{X \times X \times I})$ satisfies the conditions (i) $\F|_{t=1} = \k_{\Delta}$, (ii) $SS(\F) \subset T^*_{\{\tau\leq0\}} (X \times X \times I)$ and (iii) $\F$ is homotopic to $\k_{\Delta \times I}$. Then $\F \simeq \k_{\Delta \times I}$. \end{conj}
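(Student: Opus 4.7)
The plan is to translate the contact-geometric proof of Proposition 2.1.A in \cite{EP00}---that a non-negative contractible loop at the identity is trivial on an orderable contact manifold---into the derived-category setting, using the microlocal propagation and uniqueness techniques of \cite{GKS12}, in particular Theorem 4.13 and Proposition 4.8.

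First, I would reformulate the desired conclusion in terms of $\geq_s$. Since $\k_{\Delta \times I}$ is the unit of $\circ|_I$, condition (ii) is exactly the relation $\k_{\Delta \times I} \geq_s \F$. By the first assertion of Theorem \ref{prop-sheaf-order}, $\geq_s$ is a partial order, hence anti-symmetric, on $\D_{\rm adm}^b(\k_{X \times X \times I})$, so it suffices to establish $\F \geq_s \k_{\Delta \times I}$, i.e., $SS(\F^{-1}) \subset T^*_{\{\tau \leq 0\}}(X \times X \times I)$.

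Second, I would use $\Theta \in \D_{\rm adm}^b(\k_{X \times X \times I_t \times I_s})$ from condition (iii) to run a continuation argument in the parameter $s$. For each $s \in I_s$, the restriction $\Theta_s := \Theta|_s$ lies in $\D_{\rm adm}^b(\k_{X \times X \times I_t})$, with $\Theta_0 \simeq \F$ and $\Theta_1 \simeq \k_{\Delta \times I}$. Let
\[ S := \left\{s \in I_s \,:\, SS(\Theta_s^{-1}) \subset T^*_{\{\tau \leq 0\}}(X \times X \times I_t)\right\}. \]
Then $1 \in S$, since $\Theta_1^{-1} \simeq \k_{\Delta \times I}$ has singular support on the conormal of $\Delta \times I$, for which the $\tau$-component vanishes. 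The aim is to show $S = I_s$, so that $0 \in S$ furnishes the required bound $SS(\F^{-1}) \subset T^*_{\{\tau \leq 0\}}$.

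The main obstacle is establishing that $S$ is both open and closed. I expect openness to follow from the usual continuity of singular support in admissible families, but closedness demands a uniform microlocal estimate on the ambient $\Theta$. Concretely, I would hope to upgrade admissibility of $\Theta$, together with the boundary data at $s=0$ (condition (ii)) and $s=1$ ($\k_{\Delta \times I}$), to the global bound
\[ SS(\Theta) \cap \{\sigma = 0\} \subset \{\tau \leq 0\} \]
in the coordinates $(\tau, \sigma)$ dual to $(t, s)$; an adaptation of the microlocal cut-off lemma underlying \cite[Proposition~4.8]{GKS12} to the two-parameter family $\Theta$ should then propagate the $\tau \leq 0$ condition on $\Theta_s^{-1}$ across the whole interval in $s$. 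Once $S = I_s$, anti-symmetry of $\geq_s$ on $\D_{\rm adm}^b$ completes the proof.
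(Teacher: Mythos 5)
You should first be aware that the paper offers no proof of this statement: it is stated explicitly as Conjecture \ref{conj-anti-sym}, and the authors write that it is unknown to them whether 7D/8D hold, suggesting only that ``some homotopy version'' of Proposition 4.8 in \cite{GKS12} might help. So there is nothing in the paper to compare your argument against; what matters is whether your proposal closes the gap, and it does not. Your opening reduction is correct and costs nothing: condition (ii) is indeed $\k_{\Delta \times I} \geq_s \F$, and by anti-symmetry of $\geq_s$ on $\D^b_{\rm adm}$ it suffices to prove $SS(\F^{-1}) \subset T^*_{\{\tau \leq 0\}}$. But note that this reduction makes no use of (i) or (iii), and for a sheaf quantization $\K_\phi$ of a genuinely non-constant non-negative loop one has $SS(\K_\phi^{-1}) = SS(\K_{\phi^{-1}}) \subset \{\tau \geq 0\}$ with $\tau$ not identically zero; so the entire difficulty of the conjecture is concentrated in showing that (i) and (iii) force the sign of $\tau$ on $SS(\F^{-1})$ to flip, and that is exactly where your argument is not a proof.

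The continuation argument has two genuine gaps. First, ``openness of $S$ from continuity of singular support in admissible families'' is not available: singular support is not continuous in families; it satisfies at best one-sided estimates under specialization and can jump, so there is no general principle making $\{s : SS(\Theta_s^{-1}) \subset \{\tau\leq 0\}\}$ open. Second, and more fundamentally, $S = I_s$ is simply not to be expected from the hypotheses. Condition (iii) only supplies \emph{some} admissible two-parameter family $\Theta$ with the prescribed endpoints; it imposes no sign condition whatsoever on the slices $\Theta_s$ for $0 < s < 1$. In the geometric analogue, a homotopy from a non-negative contractible loop to the constant loop passes in general through loops that are neither non-negative nor non-positive, so the analogue of your set $S$ need not be all of $I_s$ even when the conclusion of the conjecture holds. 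For the same reason, the ``global bound'' $SS(\Theta) \cap \{\sigma = 0\} \subset \{\tau \leq 0\}$ is not a consequence of admissibility of $\Theta$ plus the boundary data: hypothesis (ii) constrains only the slice $s=0$. The actual proofs of the geometric statement 8C (Eliashberg--Polterovich, Chernov--Nemirovski, or Theorem 4.13 of \cite{GKS12}) do not proceed by showing every intermediate loop is non-positive; they extract a contradiction from iterating the loop and using a propagation/finiteness argument, and it is precisely a sheaf-theoretic version of that mechanism (a homotopy version of \cite[Proposition 4.8]{GKS12} applied to $\Theta$ itself) that would need to be developed. Your proposal correctly identifies where such input should enter but does not supply it, so the conjecture remains open.
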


Recall that $G_+$ is defined in (\ref{dfn-G+1}), the set of all dominants in $\widetilde{{\rm Cont}}_0(M, \xi)$. Assume that Conjecture \ref{conj-anti-sym} holds, then the following result is immediate. 

\begin{cor} [Assume Conjecture \ref{conj-anti-sym}] The embedding defined in (\ref{emb-sheaf-2}) restricts to a monotone decreasing embedding $[\sigma]: (G_+, d_{\geq_o}) \to (\mathcal D_{{\rm adm} \cap {\rm dom}}^b(\k_{X \times X \times I})/\sim, d_{\geq_s})$, where $d_{\geq_o}$ and $d_{\geq_s}$ are the pseudo-metrics induced from the corresponding relative growth rates of the partial orders $d_{\geq_o}$ and $d_{\geq_s}$, respectively. \end{cor}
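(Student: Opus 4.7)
The plan proceeds in three steps. First, granting Conjecture \ref{conj-anti-sym}, we promote $\geq_s$ from a partial order on $\D^b_{\rm adm}(\k_{X \times X \times I})$ (Theorem \ref{prop-sheaf-order}) to a well-defined partial order on the quotient $\D^b_{\rm adm}(\k_{X \times X \times I})/{\sim}$. Reflexivity and transitivity descend automatically. For anti-symmetry (and well-definedness under $\sim$), if $\F \geq_s \G$ and $\G \geq_s \F$, then $SS(\F^{-1} \circ|_I \G)$ and $SS(\G^{-1} \circ|_I \F)$ both lie in $T^*_{\{\tau \leq 0\}}(X \times X \times I)$; since the second sheaf is the convolution-inverse of the first, a standard singular-support computation forces $SS(\F^{-1} \circ|_I \G) \subset \{\tau = 0\}$. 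Combining with $\F|_{t=1} \simeq \G|_{t=1}$ and the homotopy supplied by $\sim$, we would verify the three hypotheses of Conjecture \ref{conj-anti-sym} for $\F^{-1} \circ|_I \G$ and conclude $\F \sim \G$. This is the sheaf-theoretic analogue of the argument $7\mathrm{C} \Leftrightarrow 8\mathrm{C}$ of \cite{EP00}.

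Second, we verify that the embedding (\ref{emb-sheaf-2}) restricts to an injection $G_+ \hookrightarrow \D^b_{{\rm adm} \cap {\rm dom}}(\k_{X \times X \times I})/{\sim}$. Each $[\phi] \in G_+$ admits a positive representative inside its class in $\widetilde{\rm Cont}_0(M, \xi)$ (a standard consequence of being a dominant, cf.~\cite{EP00}), and Theorem \ref{prop-sheaf-order} applied to this representative places $\K_\phi$ in $\D^b_{{\rm adm} \cap {\rm dom}}$. Injectivity is inherited directly from the injectivity of (\ref{emb-sheaf-2}). Moreover, the same theorem, together with the convolution functoriality $\K_{\phi \circ \psi} \simeq \K_\phi \circ|_I \K_\psi$ from \cite{GKS12}, yields the crucial monotonicity: if $[\phi] \geq_o [\psi]$ in $\widetilde{\rm Cont}_0(M,\xi)$, then $[\K_\phi] \geq_s [\K_\psi]$ in $\D^b_{{\rm adm} \cap {\rm dom}}/{\sim}$.

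Third, this monotonicity converts into the desired pseudo-metric inequality by unwinding the definitions (\ref{dfn-rgr}) and (\ref{dfn-pm}). Whenever $[\phi]^k \geq_o [\psi]^l$, Step 2 gives $[\K_\phi]^k \geq_s [\K_\psi]^l$; taking infimum over such ratios on both sides yields
\[ \rho^+_{\geq_s}([\K_\phi], [\K_\psi]) \leq \rho^+_{\geq_o}([\phi], [\psi]), \]
and symmetrically for $\rho^-$. Taking the maximum of the two and the logarithm gives $d_{\geq_s}([\K_\phi], [\K_\psi]) \leq d_{\geq_o}([\phi], [\psi])$, i.e., the monotone decreasing property.

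The main obstacle will be Step 1: carefully verifying the homotopy-to-$\k_{\Delta \times I}$ hypothesis of Conjecture \ref{conj-anti-sym} from the $\sim$ equivalence, and ensuring the singular-support estimate for the convolution-inverse is sharp enough to force $\tau = 0$ rather than merely $\tau \leq 0$. Once Step 1 is secured, Steps 2 and 3 are essentially formal and parallel the proof of Corollary \ref{cor-po+}.
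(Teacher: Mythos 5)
The paper offers no written proof here (it declares the corollary ``immediate'' after Conjecture \ref{conj-anti-sym}), and your Steps 2 and 3 supply exactly the formal content the authors have in mind: positivity of a representative puts $\K_\phi$ among the dominants, $\K_{\phi\circ\psi}\simeq\K_\phi\circ|_I\K_\psi$ transports ordering pairs, and the infimum/maximum/logarithm manipulation is the same as in Corollary \ref{cor-po+} and the second half of Theorem \ref{prop-sheaf-order}. Those steps are fine.

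Step 1, however, does not work as written, and this is precisely the step that carries the weight of the conjecture. If $\F\geq_s\G$ and $\G\geq_s\F$ hold for the \emph{same} pair of objects $\F,\G\in\D^b_{\rm adm}(\k_{X\times X\times I})$, then the singular-support cancellation you describe already appears in the paper's proof of Theorem \ref{prop-sheaf-order}, and together with Proposition \ref{prop-ss-2} and the normalization at $t=0$ it yields $\F\simeq\G$ with no appeal to the conjecture at all. The conjecture is needed only for anti-symmetry (equivalently, well-definedness) of the \emph{descended} relation, where $[\F]\geq_s[\G]$ and $[\G]\geq_s[\F]$ are witnessed by \emph{different} representatives $\F_1\sim\F_2\sim\F$, $\G_1\sim\G_2\sim\G$; the two singular-support conditions then live on the distinct sheaves $\A=\F_1^{-1}\circ|_I\G_1$ and $\B=\G_2^{-1}\circ|_I\F_2$ and cannot be combined on a single $\F^{-1}\circ|_I\G$. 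Worse, your proposed application of Conjecture \ref{conj-anti-sym} to $\F^{-1}\circ|_I\G$ is circular: hypothesis (iii), that $\F^{-1}\circ|_I\G$ is homotopic to $\k_{\Delta\times I}$, is (after convoluting with $\F$) exactly the assertion that $\F$ is homotopic to $\G$, which is what you are trying to prove. The non-circular route, paralleling Proposition 2.1.A of \cite{EP00}, is to apply the conjecture to $\C:=\A\circ|_I\B$: its homotopy to $\k_{\Delta\times I}$ comes for free because $\C$ represents the class $[\F^{-1}\circ|_I\G]\circ|_I[\F^{-1}\circ|_I\G]^{-1}=[\k_{\Delta\times I}]$ (using $\F_1\sim\F_2$ and $\G_1\sim\G_2$), $SS(\C)\subset T^*_{\{\tau\leq0\}}(X\times X\times I)$ by Proposition \ref{prop-ss-1}, and $\C|_{t=1}\simeq\k_\Delta$; the conjecture then gives $\C\simeq\k_{\Delta\times I}$, so $\B\simeq\A^{-1}$, both $SS(\A)$ and $SS(\A^{-1})$ lie in $\{\tau\leq0\}$, hence $SS(\A)\subset\{\tau=0\}$, and Proposition \ref{prop-ss-2} forces $\A\simeq\k_{\Delta\times I}$ and $\F\sim\G$. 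With Step 1 repaired along these lines, the rest of your argument goes through.
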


\section{Proofs} \label{ssec-results}
In this section, we will provide the necessary background of the concepts appeared in the main results in the introduction, and their detailed proofs will also be presented.  

\subsection{Proof of Theorem \ref{G1-prop}} \label{subsec-pf-g1-prop}
First, let us recall some standard background on the (non)-orderability of contact manifolds (\cite{EP00}). Let $(M, \xi)$ be a closed contact manifold with a co-oriented contact structure. Denote by ${\rm PCont}_0(M, \xi)$ the set of paths of contactomorphisms on $(M, \xi)$ which are parametrized by $[0,1]$ and start from $\mathds{1}$. Each element in ${\rm PCont}_0(M, \xi)$ is called a contact isotopy. Fix a contact 1-form $\alpha \in \Omega^1(M)$, then each contact isotopy $\phi = \{\phi_t\}_{t \in [0,1]} \in {\rm PCont}_0(M, \xi)$ is generated by a unique  contact Hamiltonian function $h_{\alpha}(\phi): [0,1] \times M \to \R$. We call $\phi$ {\it positive (or non-negative)} if the corresponding $h_{\alpha}(\phi)$ is pointwisely positive (or non-negative). Observe that being positive (or non-negative) is independent of the choice of the contact 1-forms. Denote by $SM = \R_+ \times M$ the symplectization of $(M, \xi)$ and $s$ denotes its $\R_+$-coordinate, then a contact Hamiltonian function $h_{\alpha}(\phi)$ lifts to a 1-homogenous (symplectic) Hamiltonian function $H_{\alpha}(\phi) = s\cdot h_{\alpha}(\phi)$ with respect to the symplectic structure $\omega = d(s\pi_M^*\alpha)$ on $SM$, where $\pi_M: SM \to M$ is the projection onto $M$. 

Now, consider $\widetilde{\rm Cont}_0(M, \xi)$ defined by ${\rm PCont}_0(M, \xi)/{\sim}$, where, for two elements $\phi, \psi \in {\rm PCont}_0(M, \xi)$, $\phi \sim \psi$ if and only if $\phi_1 = \psi_1$ and they are homotopic with fixed endpoints through elements in ${\rm PCont}_0(M, \xi)$. 
We call an element $[\phi] \in \widetilde{\rm Cont}_0(M, \xi)$ {\it positive (or non-negative)} if it admits a positive (or non-negative) representative. Due to Criterion 1.2.C in \cite{EP00}, a contact manifold $(M, \xi)$ is {\it non-orderable} if $[\mathds 1] \in \widetilde{\rm Cont}_0(M, \xi)$, otherwise $(M, \xi)$ is {\it orderable}. A standard example of a non-orderable contact manifold is $(S^{2n-1}, \xi_{\rm std})$ for $n \geq 2$, and a standard example of an orderable contact manifold is $(S_g^*X, \xi_{\rm can})$, the unit co-sphere bundle of a closed manifold $X$ with respect to a fixed Riemannian metric $g$ on $X$ (see Corollary 9.1 in \cite{CN10}). 

When $(M, \xi)$ is orderable, for brevity denote $G := \widetilde{\rm Cont}_0(M, \xi)$ and
\begin{equation} \label{dfn-G+1}
G_+ := \{[\phi] \in G\,| \, \mbox{$[\phi]$ is positive}\}.
\end{equation}
It is readily to check that this $G$ can be endowed with a partial order $\geq$ defined as follows. Fix any contact 1-form $\alpha$ on $M$. For any two elements $[\phi], [\psi] \in G$, define 
\begin{equation} \label{po-ep}
[\phi] \geq_o [\psi]\,\,\,\,\mbox{if and only if} \,\,\,\,\mbox{$h_{\alpha}(\phi) \geq h_{\alpha}(\psi)$ pointwisely}
\end{equation}
for some representatives (without loss of generality, again denoted by) $\phi$ and $\psi$ of $[\phi]$ and $[\psi]$, respectively (see Proposition 1.4.B in \cite{EP00}). This condition is equivalent to the condition that their (symplectic) Hamiltonian lifts satisfy $H_{\alpha}(\phi) \geq H_{\alpha}(\psi)$ pointwisely on $[0,1] \times SM$. Note that the partial order $\geq_o$ is also bi-invariant under the group structure of $G$. Moreover, in terms of $\geq_o$, elements in $G_+$ defined in (\ref{dfn-G+1}) are exactly the dominants of $G$. The relative growth rate $\gamma_{\geq_o}([\phi], [\psi])$ is studied in (\cite{EP00}), and $(G_+, d_{\geq_o})$ is a pseudo-metric space, where $d_{\geq_o}$ is defined as in (\ref{dfn-pm}). Recall that the relation $\geq_+$ is defined in (\ref{order-2}) between elements in the equivalence classes of positive paths from $\overrightarrow{G}$. Now, we are ready to give the proof of Theorem \ref{G1-prop}. 

\begin{proof} [Proof of Theorem \ref{G1-prop}] The reflexive property of the relation $\geq_+$ is obvious. Let us show the transitive property. Take $[\phi], [\psi], [\theta] \in \overrightarrow{G}$, and assume that $[\phi] \geq_+ [\psi]$ and $[\psi] \geq_+ [\theta]$. Then $[\phi] \geq_+ [\theta]$ if $[\phi] = [\psi]$ or $[\psi] = [\theta]$. Otherwise, there exist $[\xi], [\zeta] \in \overrightarrow{G}$ such that $[\phi] = [\psi][\xi]$ and $[\psi] = [\theta][\zeta]$. Then $[\phi] = [\theta]([\zeta][\xi])$ where $[\zeta][\xi] \in \overrightarrow{G}$, which implies that $[\phi] \geq_+ [\theta]$ by definition. Now, let us show the anti-symmetric property. For $[\phi], [\psi] \in \overrightarrow{G}$, if $[\phi] \geq_+ [\psi]$ and $[\psi] \geq_+ [\phi]$, then by the same argument as above, either $[\phi] =  [\psi]$ (then we obtain the desired conclusion) or there exists some non-trivial element $[\theta] \in \overrightarrow{G}$ such that $[\phi] = [\phi] [\theta]$. Therefore, if $[\phi] \neq [\psi]$, then $\pi([\phi]) = \pi([\phi]) \pi([\theta])$, where $\pi: \overrightarrow{G} \to \widetilde{\rm Cont}_0(M, \xi)$ is the canonical projection. Since ${\widetilde{\rm Cont}}_0(M, \xi)$ is a group, then one gets $\pi([\theta]) = \mathds{1}$. This contradicts our hypothesis that $(M, \xi)$ is orderable. 

In order to prove the bi-invariant property, observe that $[\phi] \geq_+ [\psi]$ if and only if $[\phi] = [\psi]$ or $[\phi] = [\xi] [\psi]$ (left multiplication) for some $[\xi] \in \overrightarrow{G}$. In fact, if $[\phi] = [\psi]$, then nothing needs to be proved. If $[\phi] \neq [\psi]$, then there exists some $[\theta] \in \overrightarrow{G}$ such that $[\phi] = [\psi] [\theta]$. Now, consider path $\{\xi_t\}_{t \in [0,1]} : = \{\psi_1 \theta_t \psi_1^{-1}\}_{t \in [0,1]}$, and we claim that $\{\xi_t \psi_t\}_{t \in [0,1]} \sim \{\psi_t \theta_t\}_{t \in [0,1]}$. Then denote by $\xi = \{\xi_t\}_{t \in [0,1]}$, we know that $[\phi] = [\psi][\theta] = [\xi] [\psi]$. To prove our claim, similarly to the proof of the formula (84) in \cite{EKP06}, consider new paths
\begin{equation*}
\psi_t' = \left\{ \begin{array}{llr} \psi_{2t} & \mbox{for} & t \in [0, 1/2]\\ \psi_1 & \mbox{for} & t \in [1/2, 1] \end{array} \right. \,\,\,\,\,\mbox{and}\,\,\,\,\, \theta_t' = \left\{ \begin{array}{llr} \mathds{1}& \mbox{for} & t \in [0, 1/2]\\ \theta_{2t-1} & \mbox{for} & t \in [1/2, 1] \end{array} \right..
\end{equation*}
Then $\{\psi_t\}_{t\in [0,1]}$ is homotopic with fixed endpoints to $\{\psi'_t\}_{t \in [0,1]}$, and $\{\theta_t\}_{t\in [0,1]}$ is homotopic with fixed endpoints to $\{\theta'_t\}_{t \in [0,1]}$. Meanwhile, all the intermediate paths, i.e., except $\{\psi'_t\}_{t \in [0,1]}$ and $\{\theta'_t\}_{t\in [0,1]}$ themselves, can be chosen to be positive. Moreover, for any $t \in [0,1]$, $\psi_t' \theta_t' = \psi_1\theta_t'\psi_1^{-1} \psi_t'$. Therefore, by a ``smooth the corner'' argument, we obtain the desired homotopy. Next, for $[\phi], [\psi] \in \overrightarrow{G}$ such that $[\phi] \geq_+ [\psi]$ and for any $[\zeta] \in \overrightarrow{G}$, by definition, either $[\zeta] [\phi] = [\zeta] [\psi]$ or $[\zeta][\phi] = [\zeta] [\psi] [\theta]$ for some $[\theta] \in \overrightarrow{G}$, which implies that $[\zeta][\phi] \geq_+ [\zeta][\psi]$. On the other hand, using the claim above, $[\phi] \geq_+ [\psi]$ implies that either $[\phi] = [\psi]$ or $[\phi] = [\xi][\psi]$ for some $[\xi] \in \overrightarrow{G}$. Then either $[\phi][\zeta] = [\psi][\zeta]$ or $[\phi] [\zeta] = [\xi] [\psi] [\zeta]$, which also implies that $[\phi][\zeta] \geq_+ [\psi][\zeta]$. 

Finally, let us show the monotonicity of the projection $\pi: \overrightarrow{G} \to \widetilde{\rm Cont}_0(M, \xi)$. For $[\phi], [\psi] \in \overrightarrow{G}$ such that $[\phi] \geq_+[\psi]$, if $[\phi] = [\psi]$, then there is nothing to prove. If $[\phi] \neq [\psi]$, then $[\phi] = [\psi] [\theta]$ for some $[\theta] \in \overrightarrow{G}$. Fix a contact 1-form $\alpha$ on $M$. Assume $[\psi]$ is represented by a positive path $\psi = \{\psi_t\}_{t \in [0,1]}$, and $[\theta]$ is represented by a positive path $\theta = \{\theta\}_{t \in [0,1]}$. Denote by $h_{\alpha}(\psi)$ and $h_{\alpha}(\phi)$ the positive contact Hamiltonian functions. Note that the representatives of a class in $\overrightarrow{G}$ are also representatives of its image under $\pi$ in ${\widetilde{\rm Cont}}_0(M, \xi)$. Therefore, for $\pi([\phi])$, one can choose the representative $\{\psi_t \theta_t\}_{t \in [0,1]}$. One can easily check that the corresponding contact Hamiltonian functions satisfy $h_{\alpha}(\psi \theta) = h_{\alpha}(\psi) + (e^{g_{\alpha}(\psi)}\cdot h_{\alpha}(\theta)) \circ \psi_t^{-1}$, where $g_{\alpha}(\psi)$ is the conformal factor of $\psi$ with respect to $\alpha$. By the positivity of $h_{\alpha}(\theta)$, we know $h_{\alpha}(\psi \theta) \geq h_{\alpha}(\psi)$ pointwisely, and then $\pi([\phi]) \geq_+ \pi([\psi])$ by (\ref{po-ep}). 
\end{proof}

When $(M, \xi)$ is non-orderable, we claim that there does not exist such an analogue result as in Theorem \ref{G1-prop}. Recall that the condition $(M, \xi)$ is non-orderable is equivalent to $\mathds{1} \in \pi(\overrightarrow{G})$, where $\pi: \overrightarrow{G} \to {\widetilde{\rm Cont}}_0(M, \xi)$ is the natural projection. Denote by $\overrightarrow{G}_0 = \pi^{-1}(\mathds{1})$. There is an interesting observation from Corollary 6.6 in \cite{EKP06} that $\overrightarrow{G}_0$ contains a unique ``stable'' elements denoted by $\theta_{\rm st}$ in that sense that any element $\theta \in \overrightarrow{G}_0$ admits a power $N \in \N$ such that $\theta^N = \theta_{\rm st}$. This makes the structure of $\overrightarrow{G}$ delicate. Consider the following subset of $\overrightarrow{G}$, 
\begin{equation} \label{dfn-Gamma}
\Gamma = \{ x \in \overrightarrow{G} \,| \, x \theta_{\rm st} = x\}.
\end{equation}
The partial order $\geq_+$ defined in (\ref{order-2}) does not apply to $\overrightarrow{G}$, but, due to (iv) in Proposition 6.10, it does define a genuine partial order on $\overrightarrow{G} \backslash \Gamma$. Note that $\overrightarrow{G} \backslash \Gamma$ is not always empty (see Proposition 6.13 in \cite{EKP06} for the case where $(M, \xi) = (S^{2n-1}, \xi_{\rm std})$ and $n \geq 2$). However, we observe that the relative growth rate is not well-defined on $\overrightarrow{G} \backslash \Gamma$ mainly due to (iii) in Proposition 6.9 in \cite{EKP06}. Explicitly, $\overrightarrow{G}$ is an epigroup with respect to $\Gamma$ in the sense that every element $x \in \overrightarrow{G}$ admits a power $N \in \N$ such that $x^N \in \Gamma$. Therefore, $\overrightarrow{G} \backslash \Gamma$ is not even closed under the semi-group multiplication of $\overrightarrow{G}$. If one wants to relax the discussion of $\geq_+$ to the entire $\overrightarrow{G}$, then we claim that the relative growth rate $\gamma_{\geq _+} \equiv 0$, so $d_{\geq _+}$ is still not well-defined. In fact, for any sufficiently large $k \in \N$ such that $x^k \in \Gamma$, we can take an arbitrarily large $l \in \N$ such that $y^{l} \in \Gamma$. Then, by (ii) in Proposition 6.10 in \cite{EKP06}, $x^k \geq y^{l}$. Therefore, $\rho^+_{\geq _+}(x,y) = 0$. Similarly, $\rho^-_{\geq_+}(y,x) =0$, which implies that the relative growth rate $\gamma_{\geq_+}(x,y)= 0$. 

\medskip

In this subsection, we also want to provide a useful observation of the relation $\geq_o$. Explicitly, on an orderable contact manifold, the relative growth rate derived from the partial order $\geq_o$ is closely related to certain norms on $G$. Recall that (see Remark 7 in \cite{She17}) a pseudo-norm $\nu$ defined on a group $G$ is called {\it quasi-conjugate invariant} if for each $b \in G$, there exists a constant $C(b)$ such that $\nu(bab^{-1}) \leq C(b) \cdot \nu(a)$ for any $a \in G$. Let $G = \widetilde{\rm Cont}_0(M, \xi)$ and $G_+$ be the subset defined in (\ref{dfn-G+1}) above. For any $[\phi] \in G_+$ and any $[\psi] \in G$, define 
\begin{equation} \label{nu+}
\nu_{[\phi]}^+([\psi]) := \min\{k \in \Z\,| \, [\phi]^k \geq_o [\psi]\},
\end{equation}
and
\begin{equation} \label{nu-}
\nu_{[\phi]}^-([\psi]) := \max\{l \in \Z\,| \, [\psi] \geq_o [\phi]^l\}.
\end{equation}
Then define
\begin{equation} \label{nu}
\nu_{[\phi]}([\psi]) := \max\left\{ |\nu_{[\phi]}^+([\psi])|, |\nu_{[\phi]}^-([\psi])| \right\}.
\end{equation}
We have the following proposition. 
\begin{prop} \label{gamma-nu}
Let $(M, \xi)$ be a closed orderable contact manifold, $G = \widetilde{\rm Cont}_0(M, \xi)$ and $G_+$ be defined in (\ref{dfn-G+1}). Then we have the following two properties.
\begin{itemize}
\item[(a)] For any element $[\phi] \in G_+$, the function $\nu_{[\phi]}: G \to \R$ defines a quasi-conjugate invariant norm on $G$.
\item[(b)] For any $[\psi] \in G$, the stabilization of $\nu_{[\phi]}([\psi])$ satisfies the following relation,
\begin{equation} \label{stab-gamma-nu}
\lim_{\ell \to \infty} \frac{\nu_{[\phi]}([\psi]^\ell)}{\ell} = \max\left\{ |\rho^+_{\geq_o}([\phi], [\psi])|, |\rho^+_{\geq_o}([\phi], [\psi]^{-1})|\right\},
\end{equation}
\end{itemize}
where $\rho^+_{\geq_o}$ is defined as in (\ref{dfn-rgr}) with respect to the partial order $\geq_o$ on $G$.
\end{prop}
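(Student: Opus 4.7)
For part (a), my plan is to first recast $\nu_{[\phi]}$ in a symmetric form. Bi-invariance of $\geq_o$ yields the equivalence $[\phi]^k \geq_o [\psi]^{-1} \Leftrightarrow [\psi] \geq_o [\phi]^{-k}$, which implies $\nu_{[\phi]}^+([\psi]^{-1}) = -\nu_{[\phi]}^-([\psi])$ and hence
\[
\nu_{[\phi]}([\psi]) \;=\; \max\bigl(\nu_{[\phi]}^+([\psi]),\ \nu_{[\phi]}^+([\psi]^{-1})\bigr).
\]
In this form, the symmetry $\nu_{[\phi]}([\psi]^{-1}) = \nu_{[\phi]}([\psi])$ is immediate. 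Non-negativity and non-degeneracy follow because if both $\nu_{[\phi]}^+([\psi]) < 0$ and $\nu_{[\phi]}^+([\psi]^{-1}) < 0$ held, then $\mathds{1} \geq_o [\phi]^{-1} \geq_o [\psi]$ and simultaneously $\mathds{1} \geq_o [\psi]^{-1}$; the anti-symmetry of $\geq_o$ (granted by orderability of $(M,\xi)$) then forces $[\psi] = \mathds{1}$, in which case both quantities are zero. For the triangle inequality, I will prove the subadditivity $\nu_{[\phi]}^+([\psi_1][\psi_2]) \leq \nu_{[\phi]}^+([\psi_1]) + \nu_{[\phi]}^+([\psi_2])$ by successively multiplying the two defining inequalities via bi-invariance, and then combine it with the symmetric formula together with the elementary inequality $\max(a+b,\,c+d) \leq \max(a,c) + \max(b,d)$.

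For the quasi-conjugate invariance, fix $[\chi] \in G$. Since $[\phi]$ is a dominant, one can choose a positive integer $c([\chi])$ with $[\phi]^{c([\chi])} \geq_o [\chi][\phi][\chi]^{-1}$; iterating bi-invariance gives $[\phi]^{c([\chi]) k} \geq_o [\chi][\phi]^{k} [\chi]^{-1}$ for every $k \geq 0$. Hence if $k = \nu_{[\phi]}^+([\psi]) \geq 0$, conjugating $[\phi]^k \geq_o [\psi]$ by $[\chi]$ yields $\nu_{[\phi]}^+([\chi][\psi][\chi]^{-1}) \leq c([\chi])\cdot k$. For $k < 0$ the alternative observation $[\chi][\phi]^{-1}[\chi]^{-1} \leq_o \mathds{1}$, valid because $[\phi]^{-1} \leq_o \mathds{1}$, immediately gives $\nu_{[\phi]}^+([\chi][\psi][\chi]^{-1}) \leq 0$. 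Running the same argument on $[\psi]^{-1}$ and invoking the symmetric formula produces $\nu_{[\phi]}([\chi][\psi][\chi]^{-1}) \leq c([\chi]) \cdot \nu_{[\phi]}([\psi])$, which is the desired estimate with $C([\chi]) := c([\chi])$.

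For part (b), the subadditivity of $\nu_{[\phi]}^+$ established in (a) makes $\{\nu_{[\phi]}^+([\psi]^\ell)\}_{\ell \in \N}$ a subadditive sequence, so Fekete's lemma combined with definition (\ref{dfn-rgr}) yields
\[
\lim_{\ell \to \infty} \frac{\nu_{[\phi]}^+([\psi]^\ell)}{\ell} \;=\; \rho_{\geq_o}^+([\phi],[\psi]).
\]
Applying the identity $\nu_{[\phi]}^+([\psi]^{-\ell}) = -\nu_{[\phi]}^-([\psi]^\ell)$ from the opening of (a) to $[\psi]^{-1}$ in place of $[\psi]$ produces the companion limit $\lim_{\ell \to \infty} (-\nu_{[\phi]}^-([\psi]^\ell)/\ell) = \rho_{\geq_o}^+([\phi],[\psi]^{-1})$. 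Finally, writing $\nu_{[\phi]}([\psi]^\ell)/\ell = \max(|\nu_{[\phi]}^+([\psi]^\ell)|/\ell,\ |\nu_{[\phi]}^-([\psi]^\ell)|/\ell)$ and using continuity of $|\cdot|$ and $\max$ under convergence delivers formula (\ref{stab-gamma-nu}).

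The main technical obstacle is the quasi-conjugate invariance in (a): the iteration bound $[\chi \phi \chi^{-1}]^k \leq_o [\phi]^{c([\chi]) k}$ requires $k \geq 0$, so the case $\nu_{[\phi]}^+([\psi]) < 0$ must be treated by the separate observation $[\chi][\phi]^{-1}[\chi]^{-1} \leq_o \mathds{1}$. This is precisely where the positivity hypothesis $[\phi] \in G_+$ is used in an essential way, and once this case is dispatched the entire argument collapses to a clean application of subadditivity and Fekete's lemma.
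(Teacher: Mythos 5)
Your proposal is correct, and for most of the statement it runs parallel to the paper: the identity $\nu_{[\phi]}^{+}([\psi]^{-1})=-\nu_{[\phi]}^{-}([\psi])$ via bi-invariance, subadditivity by multiplying the defining inequalities (the paper packages this as the equivalence $\nu_{[\phi]}([\psi])\leq\sigma \Leftrightarrow [\phi]^{\sigma}\geq_o[\psi]\geq_o[\phi]^{-\sigma}$), and for (b) the fact that $\nu_{[\phi]}^{+}([\psi]^{\ell})/\ell\to\rho^{+}_{\geq_o}([\phi],[\psi])$, which the paper reads off directly from (\ref{dfn-rgr}); your appeal to Fekete's lemma is harmless but not needed since the definition already asserts the limit. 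The genuine divergence is in the quasi-conjugate invariance. The paper proves it analytically: it conjugates the contact Hamiltonian, invokes the $1$-periodic reparametrization lemma of Eliashberg--Polterovich, and extracts the constant $C(\mu)$ from pointwise bounds involving the conformal factor $g_{\alpha}(\mu)$ and the positivity/negativity of the conjugated Hamiltonians. You argue purely order-theoretically: since $[\phi]$ is a dominant, there is $c([\chi])\in\N$ with $[\phi]^{c([\chi])}\geq_o[\chi][\phi][\chi]^{-1}$, and bi-invariance iterates this to $[\phi]^{c([\chi])k}\geq_o[\chi][\phi]^{k}[\chi]^{-1}$ for $k\geq 0$; your reformulation $\nu_{[\phi]}(\cdot)=\max\bigl(\nu^{+}_{[\phi]}(\cdot),\nu^{+}_{[\phi]}(\cdot^{-1})\bigr)$ (legitimate because $\nu^{+}_{[\phi]}\geq\nu^{-}_{[\phi]}$ forces at least one of the two entries to be non-negative) then reduces everything to upper bounds on $\nu^{+}_{[\phi]}$, with the case $\nu^{+}_{[\phi]}([\psi])<0$ dispatched by $\mathds{1}\geq_o[\psi]$. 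This is more elementary and strictly more general — it works in any bi-invariantly partially ordered group possessing a dominant, with no contact geometry at all — and it avoids the paper's delicate choice of constant; what the paper's analytic route buys in exchange is an explicit, geometrically meaningful $C(\mu)$ expressed through the conformal factor of the conjugating isotopy.
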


\begin{remark} When $[\phi] \in G_+$ is a homotopy class represented by a loop, $\nu_{[\phi]}$ is in fact a conjugate-invariant norm. This easily comes from the fact that any $[\phi] \in \pi_1({\rm Cont}(M, \xi), \mathds{1})$ lies in the center of $G$. Then, by the bi-invariant property of the partial order $\geq_o$, both $\nu_{[\phi]}^+$ and $\nu_{[\phi]}^-$ are conjugate-invariant. If, furthermore, when $[\phi]$ is represented by a 1-periodic Reeb flow (if it exists), then the definition (\ref{nu+}) and (\ref{nu-}) are the functions $\nu_+$ and $\nu_-$ defined in subsection 2.2 in \cite{FPR18}. Therefore, the item (a) in Proposition \ref{gamma-nu} can be regarded as a generalization of Theorem 2.15 in \cite{FPR18}.  \end{remark}

\begin{remark} Let $G$ be a group endowed with a norm $\nu$. Recall that (see subsection 1.3 in \cite{Pol06}) an element $g \in G$ is called {\it undistorted with respect to $\nu$} if the stabilization $\lim_{\ell \to \infty} \frac{\nu(g^\ell)}{\ell} >0$, otherwise it is called {\it distorted with respect to $\nu$}. In general, it is difficult to determine whether an element is (un)distorted or not. The relation (\ref{stab-gamma-nu}) in (b) in Proposition \ref{gamma-nu} implies that an element $[\psi] \in G$ is undistorted with respect to $\nu_{[\phi]}$ if and only if at least one of $\rho^+_{\geq_o}([\phi], [\psi])$ and $\rho^+_{\geq_o}([\phi], [\psi]^{-1})$ is non-zero. For instance, if $[\psi]$ or $[\psi]^{-1}$ is in $G_+$, then due to the inequality (1.1.A) in \cite{EP00}, $\rho^+_{\geq_o}([\phi], [\psi]) \neq 0$ or $\rho^+_{\geq_o}([\phi], [\psi]^{-1}) \neq 0$. This implies the undistortedness of $[\psi]$ or $[\psi]^{-1}$ with respect to $\nu_{[\phi]}$. 
\end{remark}

\begin{proof} [Proof of Proposition \ref{gamma-nu}] Let us prove (a) first. The value of $\nu_{[\phi]}$ is obviously non-negative. Moreover, if $\nu_{[\phi]}([\psi]) = 0$, then $\nu_{[\phi]}^+([\psi])=0$ and $\nu_{[\phi]}^-([\psi])=0$, which implies that ${\mathds 1} \geq_o [\psi] \geq_o {\mathds 1}$. Then by the reflexivity of the partial order $\geq_o$, we know that $[\psi] = \mathds 1$. Therefore, $\nu_{[\phi]}$ is non-degenerate. Now, for $[\psi]^{-1}$, 
\begin{align*}
\nu_{[\phi]}^+([\psi]^{-1})& = \min\{k \in \Z\,| \, [\phi]^k \geq_o [\psi]^{-1} \}\\
& = \min\{k \in \Z\,| \, [\psi] \geq_o [\phi]^{-k} \}\\
& = \min\{-l \in \Z\,|\, [\psi] \geq_o [\phi]^{l}\} \,\,\,\,\,\, (\mbox{set $l = -k$})\\
& = - \max\{ l \in \Z \,|\, [\psi] \geq_o [\phi]^l\} = - \nu_{[\phi]}^-([\psi]),
\end{align*}
where the second equality comes from the fact that the partial order $\geq_o$ is bi-invariant. Similarly, one can show that $\nu^{-}_{[\phi]}([\psi]^{-1}) = - \nu_{[\phi]}^{+}([\psi])$. Therefore, 
\begin{align*}
\nu_{[\phi]}([\psi]^{-1}) & = \max\left\{|\nu_{[\phi]}^+([\psi]^{-1})|, |\nu^{-}_{[\phi]}([\psi]^{-1})|\right\} \\
& = \max\left\{|- \nu_{[\phi]}^-([\psi])|, |- \nu_{[\phi]}^{+}([\psi])|\right\} = \nu_{[\phi]}([\psi]).
\end{align*}
Now, observe that, for any $\sigma \geq 0$, 
\begin{equation} \label{tri}
\nu_{[\phi]}([\psi]) \leq \sigma \,\,\,\,\mbox{if and only if} \,\,\,\, [\phi]^{\sigma} \geq_o [\psi] \geq_o [\phi]^{-\sigma}.
\end{equation}
This comes from definitions (\ref{nu+}) and (\ref{nu-}), as well as the fact that $\nu^{-}_{[\phi]} \leq \nu^{+}_{[\phi]}$ (by the transitivity of the partial order $\geq_o$). Next, for $[\psi_1], [\psi_2] \in G$, denote by $\sigma_1 = \nu_{[\phi]}([\psi_1])$ and $\sigma_2 = \nu_{[\phi]}([\psi_2])$. By (\ref{tri}), $[\phi]^{\sigma_1} \geq_o [\psi_1] \geq_o [\phi]^{-\sigma_1}$ and $[\phi]^{\sigma_2} \geq_o [\psi_2] \geq_o [\phi]^{-\sigma_2}$. Then 
\[ [\phi]^{\sigma_1 + \sigma_2} \geq_o [\psi_1][\psi_2] \geq_o [\phi]^{-(\sigma_1+\sigma_2)}.\]
Therefore, $\nu_{[\phi]}([\psi_1][\psi_2])\leq \sigma_1 + \sigma_2 = \nu_{[\phi]}([\psi_1]) + \nu_{[\phi]}([\psi_2])$. 
Thus, we conclude that $\nu_{[\phi]}$ is a norm. 

Now, fix an element $[\mu] \in G$, and consider $\nu_{[\phi]}([\mu] [\psi] [\mu]^{-1})$. By definition, 
\begin{align*} 
\nu^+_{[\phi]}([\mu] [\psi] [\mu]^{-1}) & = \min\{k \in \Z \,| \, [\phi]^k \geq_o [\mu] [\psi] [\mu]^{-1}\}\\
& = \min\{k \in \Z \,| \, [\mu]^{-1} [\phi]^k [\mu] \geq_o [\psi]\}\\
& = \min\{k \in \Z \,| \, ([\mu]^{-1} [\phi] [\mu])^k \geq_o [\psi]\} = \nu^+_{[\mu]^{-1}[\phi][\mu]}([\psi]). 
\end{align*} 
A similar conclusion holds for $\nu^-_{[\phi]}([\mu] [\psi] [\mu]^{-1})$. Therefore, $\nu_{[\phi]}([\mu] [\psi] [\mu]^{-1}) =   \nu_{[\mu]^{-1}[\phi][\mu]}([\psi])$.

Next, fix a contact 1-form $\alpha$ on $M$. Assume that $\nu_{[\phi]}([\psi]) = \sigma$, then the relation (\ref{tri}) follows, which implies that there exist a representative $\phi^{(\sigma)}$ of $[\phi]^{\sigma}$ (so $(\phi^{(\sigma)})^{-1}$ is a representative of $[\phi]^{-\sigma}$) and representatives $\psi', \psi''$ of $[\psi]$ such that their corresponding contact Hamiltonian functions satisfy the relations
\[ h_{\alpha}(\phi^{(\sigma)}) \geq h_{\alpha}(\psi') \,\,\,\,\mbox{and} \,\,\,\, h_{\alpha}(\psi'') \geq h_{\alpha}((\phi^{(\sigma)})^{-1}).\]
We can assume that $h_{\alpha}(\phi^{(\sigma)})>0$ since $[\phi] \in G_+$, and then $h_{\alpha}((\phi^{(\sigma)})^{-1})<0$. Choose any representative $\mu$ of $[\mu]$, then $\mu^{-1} \phi^{(\sigma)} \mu$ represents the class $[\mu]^{-1}[\phi]^{\sigma} [\mu]$. Denote by $g_{\alpha}(\mu)$ the conformal factor of the path $\mu$ under the contact 1-form $\alpha$, one can easily check that the contact Hamiltonian function of the path $\mu^{-1} \phi^{(\sigma)} \mu$ is $e^{-g_{\alpha}(\mu)} (h_{\alpha}(\phi^{(\sigma)}) \circ \mu)$, which is, in particular, positive. Due to Lemma 3.1.A in \cite{EP00}, there exists a representative of $[\mu]^{-1}[\phi]^{\sigma} [\mu]$ such that the corresponding contact Hamiltonian function, denoted by $f_{\mu}(t,x)$, is 1-periodic and positive. Meanwhile, denote the contact Hamiltonian function of $\mu^{-1} (\phi^{(\sigma)})^{-1} \mu$ by $\overline{f}_{\mu}(t,x)$ which is 1-periodic and negative. Meanwhile, we can modify $h_{\alpha}(\psi')$ near $t=0, 1$ to be $\tilde h_{\alpha}(\psi')$ such that $\tilde h_{\alpha}(\psi')=0$ near $t=0,1$ and $\tilde h_{\alpha}(\psi')$ still generates a representative of the class $[\psi']$. Therefore, we can regard $\tilde h_{\alpha}(\psi')$ as a function defined on $\R/\Z \times M$ (by identifying the endpoints $t=0,1$). The same conclusion holds for $h_{\alpha}(\psi'')$, and denote by $\tilde{h}_{\alpha}(\psi'')$ the modified function defined on $\R/\Z \times M$.

By the positivity of $f_{\mu}(t,x)$ and the negativity of $\overline{f}_{\mu}(t,x)$, there exists a sufficiently large constant $C>1$, which only depends on $[\phi]$ and $[\mu]$, such that for any $(t,x) \in \R/\Z \times M$, 
\[ (Ce^{g_{\alpha}(\mu)}) \cdot f_{\mu}(t,x) \geq \max_{(t,x) \in \R/\Z \times M} \tilde h_{\alpha}(\psi'),\]
and 
\[ \min_{(t,x) \in \R/\Z \times M} \tilde h_{\alpha}(\psi'') \geq (Ce^{g_{\alpha}(\mu)}) \cdot \overline{f}_{\mu}(t,x).\]
Denote by $C(\mu) := \ceil*{C e^{g_{\alpha}(\mu)}}$. We know that $C(\mu) \cdot f_{\mu}(C(\mu) t,x)$ generates a representative of $([\mu]^{-1}[\phi]^{\sigma} [\mu])^{C(\mu)} = ([\mu]^{-1} [\phi][\mu])^{C(\mu)\sigma}$, and $C(\mu) \cdot f_{\mu}(C(\mu) t,x) \geq \tilde h_{\alpha}(\psi')$ pointwisely on $\R/\Z \times M$. Therefore, $([\mu]^{-1} [\phi][\mu])^{C(\mu)\sigma} \geq_o [\psi]$. Similarly, $C(\mu) \cdot \overline{f}_{\mu}(C(\mu) t,x)$ generates a representative of $([\mu]^{-1}[\phi]^{-\sigma} [\mu])^{C(\mu)} = ([\mu]^{-1} [\phi][\mu])^{-C(\mu)\sigma}$, and $\tilde h_{\alpha}(\psi'') \geq C(\mu) \cdot \overline{f}_{\mu}(\sigma(\mu) t,x)$ pointwisely on $\R/\Z \times M$. Therefore, $[\psi] \geq_o ([\mu]^{-1} [\phi][\mu])^{-C(\mu)\sigma}$. Together, we get 
\[ ([\mu]^{-1} [\phi][\mu])^{C(\mu)\sigma} \geq_o [\psi] \geq_o ([\mu]^{-1} [\phi][\mu])^{-C(\mu)\sigma}. \]
Therefore,
\begin{align*}
\nu_{[\phi]}([\mu] [\psi] [\mu]^{-1}) & = \nu_{[\mu]^{-1}[\phi][\mu]}([\psi]) \leq C(\mu)\sigma = C(\mu)\cdot \nu_{[\phi]}([\psi]). 
\end{align*}
This holds for any $[\psi] \in G$, then we conclude that the norm $\nu_{[\phi]}$ is quasi-conjugate invariant.

Finally, let us proof (b). By definition, 
\begin{equation} \label{stab-1}
\lim_{\ell \to \infty} \frac{\nu_{[\phi]}^+([\psi]^\ell)}{\ell} = \lim_{\ell \to \infty} \frac{\min\{k \in \Z \,| \, [\phi]^k \geq_o [\psi]^\ell\}}{\ell} = \rho_{\geq_o}^+([\phi], [\psi]). 
\end{equation}
Due to the discussion above, we know that $\nu_{[\phi]}^-([\psi]) = -\nu_{[\phi]}^+([\psi]^{-1})$. Then 
\begin{equation} \label{stab-2}
\lim_{\ell \to \infty} \frac{\nu_{[\phi]}^-([\psi]^\ell)}{\ell} = \lim_{\ell \to \infty}  \frac{-\nu_{[\phi]}^+(([\psi]^{-1})^\ell)}{\ell}  = -\rho_{\geq_o}^+([\phi], [\psi]^{-1}). 
\end{equation}
Therefore, 
\begin{align*}
\lim_{\ell \to \infty} \frac{\nu_{[\phi]}([\psi]^\ell)}{\ell} & = \max\left\{ \left|\lim_{\ell \to \infty} \frac{\nu_{[\phi]}^+([\psi]^\ell)}{\ell}\right|, \left|\lim_{\ell \to \infty} \frac{\nu_{[\phi]}^-([\psi]^\ell)}{\ell}\right|\right\}\\
& =\max\left\{ |\rho_+([\phi], [\psi])|, |\rho_+([\phi], [\psi]^{-1})|\right\}.
\end{align*}
Therefore, we get the desired conclusion.
\end{proof}

\subsection{Proof of Theorem \ref{lemma-ccbm-property}} \label{subsec-pf-ccbm}

Let us first briefly recall the definitions of various symplectic-style Banach-Mazur distances. For two open star-shaped domains $U, V$ of a Liouville manifold $(W, \omega, L)$, where $L$ is the Liouville vector field, a quantitative and symplectic way to compare $U$ and $V$ is via the coarse symplectic Banach-Mazur distance (\cite{GU19}, \cite{Ush18}). Explicitly, it is defined by
\begin{equation} \label{dfn-csbm}
d_c(U, V):= \inf\left\{\ln C >1\,\bigg| \,(\phi, \psi) \,\,(U \xhookrightarrow{\phi} CV\,\,\mbox{and}\,\, V \xhookrightarrow{\psi} CU)\right\},
\end{equation}
where (i) the notation ``$\xhookrightarrow{\phi}$'' between two star-shaped domains $U$ and $V$ means that there exists a Hamiltonian isotopy $\phi = \{\phi_t\}_{t \in [0,1]}$ defined on $W$ such that $\phi_0 = \mathds{1}$ and $\phi_1(U) \subset V$; (ii) the rescaling is defined by $CV := \phi_{L}^{\ln C}(V)$, where $\phi_L^t$ is the flow along the Liouville vector field $L$ (similarly to the definition of $CU$). Meanwhile, there exists a ``stronger'' version of $d_c$ called the symplectic Banach-Mazur distance $d_{\rm SBM}$ (\cite{SZ18}, \cite{Ush18}), which requires the composition $\psi \circ \phi$ in (\ref{dfn-csbm}) to be isotopic {\it inside $CU$} to the identity map of $W$ through Hamiltonian isotopies. This is one way to put the ``unknottedness'' condition of the morphisms in symplectic geometry. Obviously $d_c \leq d_{\rm SBM}$. In fact, due to the work in \cite{GU19}, for many cases the inequality ``$\leq$'' of $d_c \leq d_{\rm SBM}$ can be strict ``$<$''. Then main reason to put this unknottedness condition is to guarantee that filtered symplectic homology can be applied.

As explained in subsection \ref{section-cbm-intro} in the introduction, in the contact geometry set-up we will consider contact manifolds in the form of $W \times S^1$ where $W = (W, \omega, L)$ is a Liouville manifold. The contact structure of $W \times S^1$ is given by the (standard) contact 1-form $\alpha = \iota_L \omega + dt$, where $t$ is the coordinate of $S^1$-component. In this case, a certain rigidity of contact embeddings does appear (see Theorem 1.2 in \cite{EKP06} or Theorem 1.2' in \cite{Fra16}). We call an open domain $U \subset W \times S^1$ a {\it fiberwise star-shaped domain} if (i) $\partial \overline{U}$ is transversal to the fibers and (ii) $U \cap (W \times \{t\})$ is a star-shaped domain of $W$. For instance, let $B^{2n}(R) = \{x \in \R^{2n} \,|\, \pi |x|^2 < R\}$, and then $B^{2n}(R) \times S^1$ is a fiberwise star-shaped domain of $\R^{2n} \times S^1$. The main result in \cite{Fra16} (which generalizes the main result in \cite{EKP06}) says that for any $1< R_1 < R_2$, there does not exist any compactly supported contactomorphism $\phi$ on $\R^{2n} \times S^1$ such that $\phi(B^{2n}(R_2) \times S^1) \subset B^{2n}(R_1) \times S^1$. 

Recall that in this set-up the rescaling of a fiberwise star-shaped domain $U \subset W \times S^1$ is defined via a covering map defined in Definition \ref{dfn-cont-rescale}. Inspired by the relative growth rate, this results in the definition of the contact Banach-Mazur distance $d_{\rm CBM}$ in Definition \ref{dfn-ccbm}. Before we present the proof of Theorem \ref{lemma-ccbm-property}, let use point out an essential difference between $d_{\rm CBM}$ and $d_c$, that is, $d_{\rm CBM}$ is not always well-defined. This is deeply due to the fact that contactomorphisms do not necessarily preserve volumes, and it is in a sharp contrast with the fact that any Hamiltonian diffeomorphism preserve volumes, which implies that, for any star-shaped domain $U \subset W$, $d_c(U,U) = \ln 1 = 0$. Let us elaborate on this particular phenomenon of $d_{\rm CBM}$ from the following definition. 

\begin{dfn} \label{dfn-sq} We call a fiberwise star-shaped domain $U \subset W \times S^1$ {\it squeezable} if there exists a pair $(k,l) \in \N^2$ with $k<l$ and a compactly supported contact isotopy $\{\phi_t\}_{t \in [0,1]}$ of $W \times S^1$ such that $\phi_1(U/k) \subset U/l$. A fiberwise star-shaped domain $U \subset W \times S^1$ is called {\it non-squeezable} if it is not squeezable. \end{dfn}

From the definition, it is obvious that $d_{\rm CBM}(U,U) = 0$ or equivalently $\gamma_{\rm CBM}(U,U) = 1$ for any non-squeezable fiberwise star-shaped domain $U \subset W \times S^1$. When $W = \R^{2n}$ with $n \geq 2$, due to Theorem 1.3 in \cite{EKP06}, the contact ball $B^{2n}(R) \times S^1$ is squeezable for any $R>0$ since we can take $k, l$ sufficiently large such that both $R/k$ and $R/l$ are smaller than $1$. The concept of squeezable domains is related to the one of negligible domains (see subsection 6.3.3 in \cite{EKP06}). Recall that a fiberwise star-shaped domain $U \subset W \times S^1$ is {\it negligible} if there exists a compactly supported contact isotopy $\phi = \{\phi_t\}_{t \in [0,1]}$ such that for any open neighborhood $V$ of ${\rm Core}(W) \times S^1$, we have $\phi_1(U) \subset V$. Obviously a negligible domain $U$ is squeezable since we can take the desired pair $(k,l) \in \N^2$ to be $(1,l)$ for any large $l$, where $U/l$ is regarded as an open neighborhood of ${\rm Core}(W) \times S^1$. Therefore, due to Proposition 6.12 in \cite{EKP06}, there exist squeezable fiberwise star-shaped domains of $W \times S^1$ whenever $W$ admits a non-orderable contact hypersurface. On the other hand, the celebrated contact non-squeezing theorem in \cite{EKP06}, as well as its generalizations in \cite{Fra16} and \cite{Chiu17}, says that the contact ball $B^{2n}(R) \times S^1$ is not negligible for any $R\geq1$. Therefore, the set of all the non-squeezable fiberwise star-shaped domains of $W\times S^1$ in general strictly contains the set of all the negligible fiberwise star-shaped domains of $W\times S^1$. The following property shows that the existence of certain squeezable fiberwise star-shaped domains makes the definition of $d_{\rm CBM}$ invalid. 

\begin{prop} \label{prop-gamma-0} If there exists a squeezable {\it split} fiberwise star-shaped domain of $W \times S^1$, then $\gamma_{\rm CBM}(U,V) = 0$ for any fiberwise star-shaped domains $U, V \subset W \times S^1$. In particular, $d_{\rm CBM}$ is not well-defined. \end{prop}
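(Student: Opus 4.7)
The plan is to iterate the given squeezing of the split domain (call it $D$, with $\phi_1(D/k_0) \subset D/l_0$) to produce, for each integer $n \geq 1$, a compactly supported contact isotopy whose time-$1$ map squeezes $D/k_0^n$ into $D/l_0^n$, and then to sandwich arbitrary fiberwise star-shaped domains between rescalings of $D$ in order to transfer the squeezing to them. Since $(k_0/l_0)^n \to 0$, this will force $\rho_c(U,V) = 0$ for any pair of fiberwise star-shaped $U, V$.

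The key tool is the $k$-fold covering map $\tau_k$ from Definition \ref{dfn-cont-rescale}. A direct computation gives $\tau_k^{*}(\lambda + dt) = k(\lambda + dt)$, so $\tau_k$ is a local contactomorphism. Consequently, any compactly supported contact isotopy $\psi$ on $W \times S^1$ admits a unique isotopy lift $\tau_k^{*}\psi$ through $\tau_k$ starting at the identity; this lift is again a contact isotopy, and remains compactly supported because $\tau_k$ is a proper finite cover (its support is the full preimage of $\operatorname{supp}\psi$, consisting of $k$ compact sheets). The identities $\tau_k \circ \tau_m = \tau_{km}$ give $\tau_k^{-1}(D/m) = D/(km)$, and since $D$ is split we also have the monotone nesting $D/m \supset D/m'$ whenever $m \leq m'$.

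From the base case $\phi_1(D/k_0) \subset D/l_0$, construct $\Phi^{(n)}$ inductively with $\Phi^{(n)}_1(D/k_0^n) \subset D/l_0^n$ as follows: lift $\Phi^{(n)}$ by $\tau_{k_0}$ to obtain $(\tau_{k_0}^{*}\Phi^{(n)})_1(D/k_0^{n+1}) \subset D/(k_0 l_0^n)$, lift the base squeezing $\phi$ by $\tau_{l_0^n}$ to obtain $(\tau_{l_0^n}^{*}\phi)_1(D/(k_0 l_0^n)) \subset D/l_0^{n+1}$, and concatenate these two compactly supported contact isotopies (smoothing at the join) to produce $\Phi^{(n+1)}$.

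For arbitrary fiberwise star-shaped $U$ and $V$, the boundedness of $U$ in the $W$-direction gives an integer $c_U$ with $U/c_U \subset D$ (take $c_U$ large enough that $\phi_L^{-\ln c_U}$ shrinks the $W$-projection of $U$ into $\check D$), and the fact that $V$ contains an open neighborhood of $\mathrm{Core}(W) \times S^1$ gives an integer $d_V$ with $D/d_V \subset V$ (take $d_V$ large enough that $D/d_V$ shrinks inside this neighborhood). Setting $k_n := c_U k_0^n$ and $l_n := \lfloor l_0^n/d_V \rfloor$, the chain
\[ U/k_n \;=\; (U/c_U)/k_0^n \;\subset\; D/k_0^n \;\xrightarrow{\Phi^{(n)}_1}\; D/l_0^n \;\subset\; D/(d_V l_n) \;\subset\; V/l_n \]
produces a compactly supported contact isotopy squeezing $U/k_n$ into $V/l_n$, while $k_n/l_n \sim c_U d_V (k_0/l_0)^n \to 0$. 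Hence $\rho_c(U, V) = 0$, and by symmetry $\rho_c(V, U) = 0$, so $\gamma_{\rm CBM}(U, V) = 0$ and $d_{\rm CBM}(U, V) = \ln 0$ is undefined. The main technical point is the compatibility of the cover-lift operation with rescaling and compact support; the split hypothesis on $D$ is what makes both the identity $\tau_k^{-1}(D/m) = D/(km)$ and the monotone nesting of rescalings available, ensuring the iteration stays within the family of rescalings of $D$.
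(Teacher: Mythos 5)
Your proof is correct, but it takes a genuinely different route from the paper's. The paper reduces everything to the notion of a \emph{negligible} domain: it composes the time-$1$ map $\phi_1$ with itself $p$ times, uses splitness to get the nesting $\hat{U}/l \subsetneqq \hat{U}/k$ so that the iterates $\phi_1^{(p)}(\hat{U}/k)$ form a decreasing chain, asserts that these iterates eventually enter any neighborhood of ${\rm Core}(W)\times S^1$ (hence $\hat{U}/k$ is negligible), and then squeezes an arbitrary $U/k$ into $V/l$ for every $l$ via negligibility. You instead iterate by \emph{rescaling the isotopy itself}: lifting the given squeezing through the covering maps $\tau_{k_0}$ and $\tau_{l_0^n}$ produces explicit squeezings $D/k_0^n \to D/l_0^n$ at every scale, and the sandwiching $U/c_U \subset D$, $D/d_V \subset V$ transfers these to arbitrary pairs with ratio $k_n/l_n \sim c_U d_V (k_0/l_0)^n \to 0$. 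Your version buys quantitative control and, notably, avoids the weakest step of the paper's argument --- a nested decreasing sequence of open sets need not a priori shrink into every neighborhood of the core, whereas your geometric decay of the ratio is explicit; the covering-lift mechanism you rely on is exactly the one the paper itself uses in the proof of Theorem \ref{lemma-ccbm-property}, so it is fully within the paper's toolkit. Two small remarks: the identity $\tau_k^{-1}(D/m) = D/(km)$ holds for \emph{any} fiberwise star-shaped domain (it is just $\tau_k \circ \tau_m = \tau_{km}$ plus functoriality of preimages), so splitness is needed only for the monotone nesting $D/m' \subset D/m$ when $m \leq m'$ --- which is also precisely where the paper uses it; and your sandwiching step silently uses that fiberwise star-shaped domains are bounded in the $W$-direction and contain a uniform collar $\{u<\epsilon\}\times S^1$ of the core, both of which follow from compactness of $\overline{U}$ and transversality of $\partial\overline{U}$ to the fibers and are used implicitly by the paper as well.
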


\begin{proof} First, let us claim that the existence of a negligible domain implies that $\gamma(U,V) =0$ for any fiberwise star-shaped domains $U,V \subset W \times S^1$. In fact, denote by $A \subset W \times S^1$ the negligible fiberwise star-shaped domain. For the domain $U$, there exists a sufficiently large $k \in \N$ such that $U/k \subset A$. Then $U/k$ is negligible. Hence, by definition, for any arbitrarily large $l \in \N$, there exists a compactly supported contact isotopy denoted by $\phi_{k,l} = \{\phi_t\}_{t \in [0,1]}$ such that $\phi_1(U/k) \subset V/l$, which implies that $\rho_c(U,V) = 0$. The same argument yields that $\rho_c(V, U) = 0$. Then $\gamma_{\rm CBM}(U,V) =0$. Second, suppose that $\hat{U} \subset W \times S^1$ is a squeezable split domain from our hypothesis. By definition, there exists a pair $(k,l) \in \N^2$ with $k <l$, and a compactly supported contact isotopy $\phi = \{\phi_t\}_{t \in [0,1]}$ such that $\phi_1(\hat{U}/k) \subset \hat{U}/l$. Observe that we have the following relation (where the second strict-containing relation is exactly due to the hypothesis that the domain $\hat{U}$ is split),
\begin{equation} \label{inclusion}
\phi_1(\hat{U}/k) \subset \hat{U}/l \subsetneqq \hat{U}/k.
\end{equation}
Repeatedly using relation (\ref{inclusion}), we obtain a sequence of strictly decreasing subsets $\phi_1(\hat{U}/k)$, $\phi^{(2)}_1(\hat{U}/k)$, $\phi^{(3)}_1(\hat{U}/k)$, etc., where $\phi^{(p)}_1$ is the time-1 map of the composition of the contact isotopy $\phi$ for $p$ times. For any given neighborhood $O$ of ${\rm Core}(W) \times S^1$, there exists a sufficiently large $p$ such that $\phi^{(p)}_1(\hat{U}/k) \subset O$. In other words, ${\hat U}/k$ is  a negligible domain. Then our desired conclusion follows from the claim at the beginning of this proof. \end{proof}

On the other hand, the following example shows that there indeed exist non-squeezable domains for certain $W \times S^1$ (therefore Definition \ref{dfn-sq} is not empty). 

\begin{ex} Let $W  = T^* \T^n$ for $n \geq 1$. Let $U = B^n(R) \times \T^n \times S^1$ for any $R>0$. Then $U$ is a fiberwise star-shaped domain of $W \times S^1$. Then we claim that $U$ is non-squeezable. It is non-trivial to check this claim. In fact, this comes from the obstruction obtained from the contact shape invariant; see Theorem \ref{thm-stability}, the functorial property (2) in Lemma \ref{csh-functor} and Example \ref{ex-torus}. \end{ex} 

\begin{remark} We conjecture that on $W \times S^1$ where $W  = T^*N$ for any closed manifold $N$, there does {\it not} exist any squeezable fiberwise star-shaped domain. We are not able to prove this at present. In fact, we do not even know how to prove a weaker conjecture that if $W = T^*N$ as above, then $W \times S^1$ does not admit any negligible domains. In general, we know that if $W$ admits a non-orderable star-shaped hypersurface, then $W \times S^1$ admits negligible domains (see Proposition 6.12 in \cite{EKP06}), but we don't know whether the converse is true or not. \end{remark}

Recall that $\mathcal N_{W \times S^1}$ denotes the set of all the non-squeezable fiberwise star-shaped domains of $W\times S^1$. Now, let us give the proof of Theorem \ref{lemma-ccbm-property}. 

\begin{proof} [Proof of Theorem \ref{lemma-ccbm-property}] First, we claims that for any $U, V \in \mathcal N_{W \times S^1}$, $\rho_c(U, V) \rho_c(V,U) \geq 1$. In fact, for any $\ep>0$, there exist pairs $(k,l)$ and $(m, n)$ in $\N^2$, and compactly supported contact isotopies of $W \times S^1$, $\phi = \{\phi_t\}_{t \in [0,1]}$ and $\psi = \{\psi_t\}_{t \in [0,1]}$, such that 
\[ \phi_1(U/k) \subset V/l \,\,\,\,\mbox{and}\,\,\,\, \psi_1(V/m) \subset U/n,  \]
and $\frac{k}{l} \leq \rho_c(U, V) +\ep$ and $\frac{m}{n}\leq \rho_c(V, U) + \ep$. Consider the covering map $\tau_m: W \times S^1 \to W \times S^1$ defined in (\ref{cover}). Then we have the following commutative diagram\footnote{This argument is borrowed from a part of the proof of Lemma 2.3 in \cite{Fra16}.},
\begin{equation} \label{lift}
\xymatrix{
W \times S^1 \ar[r]^-{\tilde{\phi}_t} \ar[d]^-{\tau_m} \ar@{-->}[rd]^-{\hat{\phi}_t} & W \times S^1 \ar[d]^-{\tau_m}\\
W \times S^1 \ar[r]^-{\phi_t} & W \times S^1}
\end{equation}
where $\hat{\phi}_t = \phi_t \circ \tau_m$, and $\tilde{\phi}_t$ is the lift of $\phi_t$. In fact, $(\hat{\phi}_t)_*(\pi_1(W \times S^1)) = (\tau_m)_*(\pi_1(W \times S^1))$ since the restriction $\phi_t|_W$ is compactly supported in $W$ and $W$ is always non-compact. By the lifting criterion (see Proposition 1.33 in \cite{Hat02}), there exists a unique lift of $\hat{\phi}_t$, denoted by $\tilde{\phi}_t$. This holds for any $t \in [0,1]$. In particular, $\phi_1$ lifts to a morphism $\tilde{\phi}_1$ and it is a contactomorphism (see Remark \ref{rmk-cont}). This implies that $\tilde{\phi}_1(\tau_m^{-1}(U/k)) \subset \tau_m^{-1}(V/l)$. Similarly, we obtain a lift of $\psi_1$ and the relation $\tilde{\psi}_1(\tau_l^{-1}(V/m)) \subset \tau_l^{-1}(U/n)$. By definition, we know that
\[ \tilde{\phi}_1(U/mk) \subset V/ml \,\,\,\,\mbox{and}\,\,\,\, \tilde{\psi}_1(V/ml) \subset U/nl. \]
Therefore, $(\tilde{\psi}_1 \circ \tilde{\phi}_1)(U/mk) \subset U/nl$. Since $U \in \mathcal N_{W \times S^1}$ is non-squeezable, by the defining property, $mk \geq nl$, which is equivalent to the inequality $\frac{k}{l} \frac{m}{n} \geq 1$, and then $\rho_c(U,V) \rho_c(V,U) + O(\ep) \geq 1$. Let $\ep \to 0$, and we get our claim. This implies that, for any $U, V \in \mathcal N_{W \times S^1}$, 
\[ d_{\rm CBM}(U,V) = \max\{ \ln \rho_c(U,V), \ln \rho_c(V,U)\} \geq 0. \]
For any $U$, $\rho_c(U,U) = 1$ since the optimal choice of the pair $(k,l) \in \N^2$ is just $(1,1)$. This implies that $d_{\rm CBM}(U,U) = 0$. Second, the symmetry of $d_{\rm CBM}$ is obvious. Third, for $U_1, U_2, U_3 \in \mathcal N_{W \times S^1}$, if $\phi_1(U_1/k) \subset U_2/l$ and $\psi_1(U_2/m) \subset U_3/n$ for some compactly supported contact isotopies of $W \times S^1$, $\{\phi_t\}_{t \in [0,1]}$ and $\{\psi_t\}_{t \in [0,1]}$, then by the argument above, we obtain the following relations with respect to the lifting contact isotopies,  
\[ \tilde{\phi}_1(U_1/mk) \subset U_2/ml \,\,\,\,\mbox{and}\,\,\,\, \tilde{\psi}_1(U_2/ml) \subset U_3/nl. \]
Therefore, $(\tilde{\psi}_1 \circ \tilde{\phi}_1)(U_1/mk) \subset U_3/nl$. For any $\ep>0$, suppose $\frac{k}{l} \leq \rho_c(U_1, U_2) +\ep$ and $\frac{m}{n} \leq \rho_c(U_2, U_3) + \ep$, then $\rho_c(U_1, U_3) \leq \frac{k}{l} \frac{m}{n} \leq \rho_c(U_1, U_2) \rho_c(U_2, U_3) + O(\ep)$. Let $\ep \to 0$, and we get $\rho_c(U_1, U_3) \leq \rho_c(U_1, U_2) \rho_c(U_2, U_3)$. A similar conclusion holds for $\rho_c(U_3, U_1)$, then the triangle inequality of $d_{\rm CBM}$ is proved. 

For the last conclusion, it suffices to prove that $\rho_c(U, \psi(V)) = \rho_c(U,V)$, and the rest part can be confirmed in a similar way. Suppose that $\psi = \psi_1$ is the time-1 map of a compactly supported contact isotopy $\psi' = \{\psi_t\}_{t \in [0,1]}$ on $W \times S^1$. By the lifting diagram (\ref{lift}), there exists a lift contact isotopy $\tilde{\psi'} = \{\tilde{\psi}_t\}_{t \in [0,1]}$ such that for any $l \in \N$, $\tilde{\psi}_t(U/l) = \psi_t(U)/l$ for any $t \in [0,1]$. For any $\ep>0$, there exists a compactly supported contact isotopy $\phi = \{\phi_t\}_{t \in [0,1]}$ and $(k,l) \in \N^2$ such that $\phi_1(U/k) \subset V/l$ and $\frac{k}{l} \leq \rho_c(U,V) + \ep$. Then the contact isotopy $\tilde{\psi'} \circ \phi : = \{\tilde{\psi}_t \circ \phi_t\}_{t \in [0,1]}$ satisfies 
\[ (\tilde{\psi'} \circ \phi)_1 (U/k) = \tilde{\psi}_1(\phi_1(U/k)) \subset \tilde{\psi}_1(V/l) =\psi(V)/l. \]
Therefore, $\rho_c(U, \psi(V)) \leq \frac{k}{l} \leq \rho_c(U, V) + \ep$, which implies that $\rho(U, \psi(V)) \leq \rho(U, V)$ if we let $\ep \to 0$. A symmetric argument then yields the desired conclusion.\end{proof}

Moreover, recall that Proposition \ref{prop-cbm-c} provides an explicit relation between $d_c$, $d_{\rm SBM}$ on the star-shaped domains of $W$ and $d_{\rm CBM}$ on the split fiberwise star-shaped domains of $W \times S^1$, via a canonical lift from $W$ to $W \times S^1$ as shown in (\ref{lift-cont}). Here, let us provide the proof of Proposition \ref{prop-cbm-c}.

\begin{proof} [Proof of Proposition \ref{prop-cbm-c}] Denote by $C := d_c(U, V)$. By definition, up to an $\ep>0$, there exist compactly supported Hamiltonian isotopies $\phi = \{\phi_t\}_{t \in [0,1]}$ and $\psi = \{\psi_t\}_{t \in [0,1]}$ such that $\phi_1(U) \subset CV$ and $\psi_1(V) \subset CU$. Then $\{\phi_t' : = \phi_L^{-\ln C} \circ \phi_t \circ \phi_L^{\ln C}\}_{t \in [0,1]}$ is a compactly supported Hamiltonian isotopy such that $\phi_1'(\frac{1}{C} U) \subset V$. For any $\delta>0$, by the density property of rational numbers, there exists a pair $(k, l) \in \N^2$ such that $\frac{l}{k} \in (\frac{1}{C}-\delta, \frac{1}{C})$. Then $\phi_1'(\frac{l}{k} U) \subset V$. By another conjugation as above, we know that there exists a compactly supported Hamiltonian isotopy $\phi'' = \{\phi''_t\}_{t \in [0,1]}$ such that $\phi_1''(\frac{1}{k} U) \subset \frac{1}{l} V$. 

Now, lift $\phi''$ to a compactly supported contact isotopy $\tilde{\phi}''$ as in (\ref{lift-cont}), and note that for the split domain $U \times S^1$, $(U \times S^1)/k \simeq \frac{1}{k} U \times S^1$. The relation $\phi_1''(\frac{1}{k} U) \subset \frac{1}{l} V$ implies that $\rho_c(U \times S^1, V \times S^1) \leq \frac{k}{l}$. Let $\delta$ go to zero, and then $\frac{k}{l}$ approaches to $C$ (as $\frac{l}{k}$ approaches to $\frac{1}{C}$). Therefore, $\rho_c(U \times S^1, V \times S^1) \leq C$. A symmetric argument implies that $\rho_c(V \times S^1, U \times S^1) \leq C$, which leads to the desired conclusion. Finally, the second conclusion is trivial since $d_c(U,V) \leq d_{\rm SBM}(U,V)$ holds by definitions. \end{proof}

\subsection{Proof of Theorem \ref{thm-dc-lb}} \label{subsec-pf-dc-lb}
Recall that subsection \ref{subsec-cbm-forms} in the introduction defines a different version of contact Banach-Mazur distance, still denoted by $d_{\rm CBM}$, between contact 1-forms of a contact manifold $(M, \xi)$ (see Definition \ref{dfn-cbm-forms}). Moreover, when $(M, \xi)$ is Liouville fillable, each contact 1-form $\alpha$ can be used to produce a star-shaped domain of $\hat{W}$ denoted by $W^{\alpha}$, where $\hat{W}$ is the completion of a filling $W$ of $(M, \xi)$. This construction is carried out in (\ref{geo-alpha}). Theorem \ref{thm-dc-lb} establishes a relation between $d_c$ and $d_{\rm CBM}$ in this set-up. In this subsection, we will give its proof. First of all, let us confirm that $(O_{\xi}(\alpha_0), d_{\rm CBM})$ is indeed a pseudo-metric space, where $\alpha_0$ is a fixed base point (as a contact 1-form of $\xi$). 

\begin{prop} \label{prop-cbm} The contact Banach-Mazur distance $d_{\rm CBM}$ on $O_{\xi}(\alpha_0)$ satisfies the following properties. For any $\alpha_1, \alpha_2, \alpha_3 \in O_{\xi}(\alpha_0)$,
\begin{itemize}
\item[(1)] $d_{\rm CBM}(\alpha_1, \alpha_2) \geq 0$, and $d_{\rm CBM}(\alpha_1, \alpha_1) = 0$;
\item[(2)] $d_{\rm CBM}(\alpha_1, \alpha_2) = d_{\rm CBM}(\alpha_2, \alpha_1)$;
\item[(3)] $d_{\rm CBM}(\alpha_1, \alpha_3) \leq d_{\rm CBM}(\alpha_1, \alpha_2) + d_{\rm CBM}(\alpha_2, \alpha_3)$; 
\item[(4)] $d_{\rm CBM}(\phi^*\alpha_1, \psi^*\alpha_2) = d_{\rm CBM}(\alpha_1, \alpha_2)$ for any $\phi, \psi \in {\rm Cont}_0(M, \xi)$;
\end{itemize}
\end{prop}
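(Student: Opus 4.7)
The plan is to reduce all four properties to two elementary compatibilities of the partial order $\preceq$ on $O_\xi(\alpha_0)$ with the operations appearing in Definition \ref{dfn-cbm-forms}. First, $\preceq$ is preserved under scaling by a positive constant, since passing from $\alpha=e^f\alpha_0$ to $C\alpha=e^{f+\ln C}\alpha_0$ merely shifts the defining function by the same additive constant on both sides of any inequality. Second, $\preceq$ is preserved under pullback by any $\psi\in{\rm Cont}(M,\xi)$, because $\psi^*(e^h\alpha_0)=e^{h\circ\psi+g_{\psi,\alpha_0}}\alpha_0$, so if $f\leq g$ pointwise then $f\circ\psi+g_{\psi,\alpha_0}\leq g\circ\psi+g_{\psi,\alpha_0}$ --- the conformal factor $g_{\psi,\alpha_0}$ enters identically on both sides and cancels out of the comparison. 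I would establish these two facts first, straight from the function description of $O_\xi(\alpha_0)$ reviewed just before Definition \ref{dfn-cbm-forms}.

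Property (1) is then immediate: the defining infimum ranges over $\ln C\geq 0$, and the choice $\phi=\mathds{1}$, $C=1$ realizes the value $0$ when $\alpha_1=\alpha_2$. For (2), given a witness $(\phi, C)$ for $d_{\rm CBM}(\alpha_1,\alpha_2)\leq\ln C$, I would apply $(\phi^{-1})^*$ to the chain $\tfrac{1}{C}\alpha_1\preceq\phi^*\alpha_2\preceq C\alpha_1$, use compatibility (ii), then rescale each inequality by $C^{\pm 1}$ to rearrange the result into $\tfrac{1}{C}\alpha_2\preceq(\phi^{-1})^*\alpha_1\preceq C\alpha_2$. This exhibits $\phi^{-1}\in{\rm Cont}_0(M,\xi)$ as a witness for $d_{\rm CBM}(\alpha_2,\alpha_1)\leq\ln C$; taking infima and swapping roles gives equality.

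For (3), with witnesses $(\phi,C_1)$ and $(\psi,C_2)$ for the two summands, I would pull back the chain $\tfrac{1}{C_2}\alpha_2\preceq\psi^*\alpha_3\preceq C_2\alpha_2$ by $\phi^*$ and then splice it with $\tfrac{1}{C_1}\alpha_1\preceq\phi^*\alpha_2\preceq C_1\alpha_1$ (after rescaling by $C_2^{\pm 1}$) to obtain $\tfrac{1}{C_1C_2}\alpha_1\preceq(\psi\phi)^*\alpha_3\preceq C_1C_2\,\alpha_1$; since ${\rm Cont}_0(M,\xi)$ is closed under composition, $\psi\phi$ is an admissible witness, and passing to infimum yields the triangle inequality. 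For (4), starting from a witness $\theta\in{\rm Cont}_0(M,\xi)$ for $\tfrac{1}{C}\alpha_1\preceq\theta^*\alpha_2\preceq C\alpha_1$, I pull back by $\phi^*$ to get $\tfrac{1}{C}\phi^*\alpha_1\preceq(\theta\phi)^*\alpha_2\preceq C\phi^*\alpha_1$, then rewrite $(\theta\phi)^*\alpha_2=(\psi^{-1}\theta\phi)^*\psi^*\alpha_2$; the element $\eta:=\psi^{-1}\theta\phi$ lies in ${\rm Cont}_0(M,\xi)$ (it is a group), which gives one direction of the claimed equality, and applying the same argument with $\phi^{-1},\psi^{-1}$ in place of $\phi,\psi$ yields the reverse.

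The whole argument is essentially formal, so no step is a genuine obstacle. The only point worth flagging is the verification of compatibility (ii): although contactomorphisms in ${\rm Cont}_0(M,\xi)$ do not act by isometries on the individual forms in $O_\xi(\alpha_0)$ (the factor $g_{\psi,\alpha_0}$ generally varies over $M$), they nevertheless preserve the partial order $\preceq$ because $g_{\psi,\alpha_0}$ appears identically on both sides of any comparison. This single algebraic fact is what makes $d_{\rm CBM}$ simultaneously ${\rm Cont}_0(M,\xi)$-invariant in (4) and well behaved under composition in (2) and (3), and so underlies the entire proposition.
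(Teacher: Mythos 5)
Your proposal is correct and follows essentially the same route as the paper's proof: all four properties are obtained by formally manipulating the chains $\tfrac{1}{C}\alpha_1 \preceq \phi^*\alpha_2 \preceq C\alpha_1$ using the compatibility of $\preceq$ with constant rescaling and with pullback (the paper uses these facts implicitly, whereas you isolate them up front, which is a presentational improvement rather than a different argument). The only cosmetic difference is in (4), where the paper first reduces to $d_{\rm CBM}(\alpha_1,\psi^*\alpha_2)=d_{\rm CBM}(\alpha_1,\alpha_2)$ while you treat $\phi$ and $\psi$ simultaneously via the witness $\psi^{-1}\theta\phi$; both are valid.
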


\begin{proof} The item (1) is trivial and the item (2) comes from the follow equivalence, 
\[ \frac{1}{C} \alpha_1 \preceq \phi^*\alpha_2 \preceq C \alpha_1 \Leftrightarrow \frac{1}{C} \alpha_2 \preceq (\phi^{-1})^*\alpha_1 \preceq C \alpha_2. \]
For (3), if $\frac{1}{C} \alpha_1 \preceq \phi^*\alpha_2 \preceq C \alpha_1$ and $\frac{1}{D} \alpha_2 \preceq \psi^*\alpha_3 \preceq D \alpha_2$ for some $C, D \geq 1$ and $\phi, \psi \in {\rm Cont}_0(M, \xi)$, then 
\[ \frac{1}{CD} \alpha_1 \preceq \frac{1}{D} \phi^*\alpha_2 \preceq \phi^* \psi^* \alpha_3 \preceq D \phi^*\alpha_2 \preceq (CD) \alpha_1. \]
For (4), it suffices to show $d_{\rm CBM}(\alpha_1, \psi^*\alpha_2) = d_{\rm CBM}(\alpha_1, \alpha_2)$. If $\frac{1}{C} \alpha_1 \preceq \phi^*\alpha_2 \preceq C \alpha_1$, then one has 
\[ \frac{1}{C} \alpha_1 \preceq (\phi^*(\psi^{-1})^*) (\psi^*\alpha_2) \preceq C \alpha_1\]
and vice versa. Thus we obtain the desired conclusion. 
\end{proof}

\begin{remark}\footnote{This remark is indebted to L. Polterovich.} The item (4) in Proposition \ref{prop-cbm} implies that $d_{\rm CBM}$ between contact 1-forms descends to a well-defined pseudo-metric, still denoted by $d_{\rm CBM}$, on the quotient space $O_{\xi}(\alpha_0)/{\rm Cont}_0(M, \xi)$.  One can readily check that this descended $d_{\rm CBM}$ is equal to the pseudo-distance defined and denoted by $d(a, b)$ on page 1528 in \cite{Pol02}. Since the contact mapping class group ${\rm Cont}(M,\xi)/{\rm Cont}_0(M, \xi)$ acts by isometries (of $d(a,b)$, hence also of $d_{\rm CBM}$) on $O_{\xi}(\alpha_0)/{\rm Cont}_0(M, \xi)$. This descended $d_{\rm CBM}$ can be used to study the dynamics of those contactomorphisms beyond ${\rm Cont}_0(M,\xi)$ as in, for instance, Proposition 3.1 in \cite{Pol02}. \end{remark}

Suppose that $(M, \xi = \ker\alpha_0)$ is closed and Liouville-fillable. Before we present the proof of Theorem \ref{thm-dc-lb}, here is an important observation. Any $\phi \in {\rm Cont}_0(M, \xi)$ can be lifted to an element $\Phi \in {\rm Symp}_0(\hat{W}, \omega)$, the identity component of the group of symplectomorphisms ${\rm Symp}(\hat{W}, \omega)$. To this end, we will take the approach from \cite{AM13}. Explicitly, consider any path $\{\phi_t\}_{t \in [0,1]}$ connecting $\mathds{1}$ and $\phi$ in ${\rm Cont}_0(M, \xi)$, and denote by $h(t,x):[0,1] \times M \to \R$ the associated contact Hamiltonian function with respect to any fixed $\alpha \in O_{\xi}(\alpha_0)$. Then its lifting to $[0,1] \times SM$ is defined by $H(t,u,x) = u \cdot h(t,x)$. For any (sufficiently large) $c>0$, consider a cut-off function $\beta_c \in C^{\infty}([0, \infty), [0,1])$ such that $\beta'_c\geq 0$, $\beta_c(u) = 1$ if $u \in [e^{-c}, \infty)$ and $\beta_c(u) = 0$ if $u \in [0, e^{-2c}]$. Modify the lift $H$ to be a smooth function $H_c:[0,1] \times \hat{W} \to \R$ defined by 
\begin{equation} \label{mod-H}
H_c|_{[0,1] \times (\hat{W} \backslash SM)} = 0, \,\,\,\,\mbox{and}\,\,\,\, H_c(t,u,x) = \beta_c(u)\cdot H(t,u,x).
\end{equation}
Denote by $\Phi_c$ the (symplectic) Hamiltonian diffeomorphism on $\hat{W}$ generated by $H_c$. Note that there always exists some (small) neighborhood of ${\rm Core}(\hat W)$ which is pointwisely fixed by $\Phi_c$, where the size of this neighborhood depends on $c$. Let us denote this neighborhood by $U_c(\hat{W})$, i.e., $U_c = \{(u,x) \in \hat{W} \,| \, u \in [0,e^{-2c}], x \in M\}$. 

For any Liouville domain $U \subset \hat{W}$ and $C>0$, recall that $CU$ is the image of $U$ under the flow of $L$ for time $\ln C$. Then we have the following lemma.  
\begin{lemma} \label{lemma-alpha}  The domain $W^{\alpha}$ constructed from (\ref{geo-alpha}) satisfies the following properties.
\begin{itemize}
\item[(1)] For $\alpha_1, \alpha_2 \in O_{\xi}(\alpha_0)$, if $\alpha_1 \preceq \alpha_2$, then $W^{\alpha_1} \subset W^{\alpha_2}$.
\item[(2)] For any $\alpha \in O_{\xi}(\alpha_0)$ and any $C \geq 0$, $W^{C \alpha} = C W^{\alpha}$. 
\item[(3)] For any $\alpha \in O_{\xi}(\alpha_0)$ and any $\phi \in {\rm Cont}_0(M, \xi)$, there exists a constant $c(\alpha)$, depending on $\alpha$, such that $W^{\phi^*\alpha} = \Phi_{c(\alpha)}^{-1}(W^{\alpha})$, where $\Phi_{c(\alpha)}$ is the (symplectic) Hamiltonian diffeomorphism generated by the modified Hamiltonian function defined in (\ref{mod-H}). 
\end{itemize}
\end{lemma}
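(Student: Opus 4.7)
The plan is to carry out direct coordinate calculations on the symplectization piece $SM = \R_+ \times M$ of $\hat{W}$, where the coordinates $(u, x)$ are adapted to $\alpha_0$ and the Liouville form restricts to $\lambda = u\,\pi_M^*\alpha_0$.

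For (1), writing $\alpha_i = e^{f_i}\alpha_0$, the hypothesis $\alpha_1 \preceq \alpha_2$ unfolds as $f_1 \leq f_2$ pointwise. Hence the defining inequality $u < e^{f_1(x)}$ for $W^{\alpha_1}$ is stronger than $u < e^{f_2(x)}$ for $W^{\alpha_2}$, proving $W^{\alpha_1} \subset W^{\alpha_2}$. For (2), the Liouville vector field $L$ restricts to $u\,\partial_u$ on $SM$, so its time-$\ln C$ flow sends $(u,x) \mapsto (Cu, x)$. Applying this to $W^\alpha = \{u < e^{f(x)}\}$ produces $\{v < C e^{f(x)}\} = \{v < e^{f(x) + \ln C}\}$, which is exactly $W^{C\alpha}$ since $C\alpha = e^{f + \ln C}\alpha_0$.

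For (3), I first compute
\[
\phi^*\alpha \;=\; \phi^*(e^f \alpha_0) \;=\; e^{f\circ\phi}\cdot \phi^*\alpha_0 \;=\; e^{f\circ\phi + g_{\phi,\alpha_0}}\alpha_0,
\]
so $W^{\phi^*\alpha} = \{(v,y)\in \hat{W} \,|\, v < e^{f(\phi(y)) + g_{\phi,\alpha_0}(y)}\}$. The canonical 1-homogeneous Liouville lift of $\phi$ on $SM$ has the explicit form
\[
\Phi(u,x) \;=\; \bigl(e^{-g_{\phi,\alpha_0}(x)}\,u,\;\phi(x)\bigr),
\]
which preserves $u\alpha_0$ and is therefore a symplectomorphism; this is the time-$1$ map of the (uncut) flow of $H = u\cdot h$. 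A direct substitution then gives $\Phi^{-1}(W^\alpha) = W^{\phi^*\alpha}$: a point $(v,y)$ lies in the preimage iff $e^{-g_{\phi,\alpha_0}(y)}\,v < e^{f(\phi(y))}$ iff $v < e^{f(\phi(y)) + g_{\phi,\alpha_0}(y)}$.

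It remains to promote this identity from $\Phi$ to the modified diffeomorphism $\Phi_{c(\alpha)}$. Using compactness of $M$ and the smoothness of $f$ together with the conformal factors $g_t$ along the chosen path $\{\phi_t\}_{t\in[0,1]}$, I will pick $c(\alpha)$ large enough that two conditions hold: (i) the core-neighborhood $U_{c(\alpha)} = \{(u,x)\,|\,u \leq e^{-2c(\alpha)}\}\cup \text{Core}(\hat{W})$ lies strictly inside both $W^\alpha$ and $W^{\phi^*\alpha}$, and (ii) the Hamiltonian flow of $H$ on $[0,1]$ starting at any point in $W^{\phi^*\alpha}\setminus U_{c(\alpha)}$ stays within $\{u > e^{-c(\alpha)}\}$, where $H_{c(\alpha)} = H$. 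With such a choice, $\Phi_{c(\alpha)}$ agrees with $\Phi$ on $W^{\phi^*\alpha}\setminus U_{c(\alpha)}$ and acts as the identity on $U_{c(\alpha)}$, mapping this subset of $W^{\phi^*\alpha}$ trivially into $W^\alpha$. Gluing the two pieces yields the desired equality $\Phi_{c(\alpha)}^{-1}(W^\alpha) = W^{\phi^*\alpha}$.

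The main obstacle is verifying condition (ii) carefully: the cutoff transition zone where $\beta_{c(\alpha)}$ interpolates from $0$ to $1$ can in principle cause $\Phi_{c(\alpha)}$ to differ from $\Phi$ there. Choosing $c(\alpha)$ sufficiently large, using uniform bounds on $f$ and on the conformal factors $g_t$ guaranteed by compactness of $M$ and of $[0,1]$, ensures that trajectories originating from the relevant portion of $W^{\phi^*\alpha}$ never enter this zone. (Although the notation $c(\alpha)$ suggests dependence on $\alpha$ alone, the argument implicitly uses bounds along the chosen path $\{\phi_t\}$ as well.)
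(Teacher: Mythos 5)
Parts (1) and (2) and the exact computation $\Phi^{-1}(W^\alpha)=W^{\phi^*\alpha}$ for the uncut, $1$-homogeneous lift $\Phi$ are correct and match the paper. The gap is in how you pass from $\Phi$ to the truncated map $\Phi_{c(\alpha)}$. You partition $W^{\phi^*\alpha}$ into the core neighborhood $U_{c(\alpha)}=\{u\leq e^{-2c(\alpha)}\}$, where $\Phi_{c(\alpha)}=\mathds{1}$, and its complement, where you want $\Phi_{c(\alpha)}=\Phi$. But your condition (ii) is unsatisfiable as stated: $W^{\phi^*\alpha}\setminus U_{c(\alpha)}$ always contains points with $u\in(e^{-2c(\alpha)},e^{-c(\alpha)})$, i.e.\ points already sitting inside the cutoff transition zone where $\beta_{c(\alpha)}$ interpolates between $0$ and $1$; a trajectory starting there cannot ``never enter'' that zone, no matter how large $c(\alpha)$ is. Enlarging $c(\alpha)$ moves the transition zone closer to the core but never removes it, so your two-piece gluing always leaves an unhandled annular region on which $\Phi_{c(\alpha)}$ is neither the identity nor equal to $\Phi$. (A secondary issue: even for points with $u\geq e^{-c(\alpha)}$, the flow of $H_{c(\alpha)}$ can dip into the transition zone during $t\in[0,1]$ when the conformal factor is positive, so the identification $\Phi_{c(\alpha)}=\Phi$ on that set also needs an argument, not just the endpoint formula.)

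The paper closes exactly this gap with a quantitative estimate rather than an exact identification: from the explicit splitting $X_{H_c}=X_L\partial_u+X_M\partial_x$ one sees that on the transition zone the radial component $X_L=-\beta_c(u)\,u\cdot dh(R_\alpha)$ differs from the homogeneous case only through the uniformly bounded factor $\beta_c(u)$, so the ratio of the $u$-component of $\Phi_{c(\alpha)}(u',x')$ to $u'$ is uniformly close to $1$. Choosing $c(\alpha)$ large then forces the image of the transition-zone part of $W^{\phi^*\alpha}$ into $\{u<\min_{x\in M}e^{f(x)}\}\subset W^{\alpha}$, which yields the inclusion $\Phi_{c(\alpha)}(W^{\phi^*\alpha})\subset W^{\alpha}$; the reverse inclusion is then obtained by running the same argument for $\phi^{-1}$ and the form $\phi^*\alpha$. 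To repair your proof you need to add this third case (or an equivalent estimate) and supply the symmetric argument for the opposite inclusion, since on the transition zone you no longer have the exact biconditional that gave you equality for the uncut $\Phi$.
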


\begin{proof} Suppose that $\alpha_1 = e^{f_1} \alpha_0$ and $\alpha_2 = e^{f_2} \alpha_0$. Then (1) and (2) directly come from the definition (\ref{geo-alpha}). For (3), suppose that $\phi^*\alpha_0 = e^{g_{\phi, \alpha_0}} \alpha_0$. Then $\phi^*\alpha = e^{f \circ \phi + g_{\phi, \alpha_0}} \alpha_0$ for some $f: M \to \R$. Then the definition (\ref{geo-alpha}) says that 
\[ W^{\phi^*\alpha} = \{(u',x') \in \hat{W} \,| \, u' < e^{f(\phi(x')) + g_{\phi, \alpha_0}(x')} \}. \]
Choose any path of contactomorphisms connecting $\mathds{1}$ and $\phi$, and denote by $h$ the associated contact Hamiltonian function with respect to $\alpha$. For any $c>0$ the Hamiltonian vector field $X_{H_c}$ of the modified lift Hamiltonian in (\ref{mod-H}) can be computed as follows. Split $X_{H_c} = X_L \frac{\partial}{\partial u} + X_M \frac{\partial}{\partial x}$ into two components. By definition, $dH_c = (\beta_c'(u) u + \beta_c(u)) h \cdot du + \beta_c(u) u \cdot dh$. By solving $\iota_X d(u\alpha) = - dH_c$, one obtain that 
\begin{equation} \label{ham-vf}
X_L = - \beta_c(u) u \cdot dh(R_\alpha) \,\,\,\, \mbox{and}\,\,\,\, X_M = (\beta_c'(u) u + \beta_c(u)) X_h + Y,
\end{equation}
where $X_h$ is the contact vector field on $M$ generated by $h$ and $Y$ is solved from the equation $d\alpha(Y) = \beta_c'(u) u \cdot (dh - dh(R_{\alpha}) \alpha)$. In particular, for $u \in [e^{-c}, \infty)$, $\beta_c(u)$ is constant $1$. Then $\beta_c'(u) = 0$, which implies that $Y = 0$, and then $X_L = - u \cdot dh(R_{\alpha})$ and $X_M = X_h$. Therefore, one easily obtain the following formula for $(u,x) \in \hat{W}$ where $u \in [e^{-c}, \infty)$, 
\begin{equation} \label{formular-Phi}
\Phi_c(u,x) = (e^{-g_{\phi, \alpha_0}(x)} u, \phi(x)). 
\end{equation}

For the star-shaped domain $W^{\phi^*\alpha}$, choose a sufficiently large $c(\alpha)>0$ (which will be specified later). For any point $(u',x') \in W^{\phi^*\alpha}$ where $u' \in [e^{-c(\alpha)}, \infty)$, by (\ref{formular-Phi}), $\Phi_{c(\alpha)}(u',x') =(e^{-g_{\phi, \alpha_0}(x')} u', \phi(x'))$. Then by the defining property of $W^{\phi^*\alpha}$, $e^{-g_{\phi, \alpha_0}(x')} u' < e^{f(\phi(x'))}$, which implies that $\Phi(u',x') \in W^{\alpha}$. On the other hand, for $u' \in [e^{-2c}, e^{-c}]$, the formula of $\Phi_{c(\alpha)}$ will not be as simple as the one in (\ref{formular-Phi}). However, by (\ref{ham-vf}), it is important to observe that the difference of $X_L$, compared with the previous case where $u' \in [e^{-c}, \infty)$, only involves with $\beta_c(u)$, which is uniformly bounded for any $x \in M$ and any $c >0$. Therefore, we can choose $c(\alpha)$ sufficiently large such that the ratio of the $u$-component of $\Phi_{c(\alpha)}(u',x')$ and $u'$ is sufficiently close to $1$ and also uniformly for any $x' \in M$. Explicitly, we will choose $c(\alpha)$ such that 
\[ \Phi_{c(\alpha)}(u',x') \in \left\{(u,x) \in \hat{W} \,\bigg| \, u < \min_{x \in M} e^{f(x)} \right\},\]
where $f$ is the function used to define $\alpha$, i.e., $\alpha = e^f \alpha_0$. Note that the minimum in the condition above exists due to the compactness of $M$. Then obviously $\Phi_{c(\alpha)}(u',x') \in W^{\alpha}$. Finally, for $u' \in [0, e^{-2c(\alpha)}]$, we can shrink $c(\alpha)$ further if it is necessary such that the $c(\alpha)$-dependent neighborhood $U_{c(\alpha)}$ of ${\rm Core}(\hat W)$ satisfies $U_{c(\alpha)} \subset W^{\alpha}$. Note that over $U_{c(\alpha)} $ the morphism $\Phi_{c(\alpha)}$ is just the identity, and then $\Phi_{c(\alpha)}(u',x') = (u',x') \in W^{\alpha}$ holds trivially. Thus, we have shown that for a sufficiently large $c(\alpha)$, $\Phi_{c(\alpha)}(W^{\phi^*\alpha}) \subset W^{\alpha}$. 

Apply the same argument to $\phi^{-1}$, then, for any contact 1-form $\beta$, we obtain that there exists a sufficiently large $c(\beta)$ such that $\Phi_{c(\beta)}^{-1} (W^{(\phi^{-1})^*\beta}) \subset W^{\beta}$. In particular, if $\beta = \phi^*\alpha$, we get that $\Phi_{c(\beta)}^{-1} (W^{(\phi^{-1})^*(\phi^*\alpha)}) = \Phi_{c(\beta)}^{-1} (W^{\alpha}) \subset W^{\phi^*\alpha}$, which is equivalent to $W^{\alpha} \subset \Phi_{c(\beta)}(W^{\phi^*\alpha})$. Replace $c(\alpha)$ by $c(\beta)$ if it is necessary, then we get the desired conclusion.\end{proof}

Now, we are ready to give the proof of Theorem \ref{thm-dc-lb}. 

\begin{proof} [Proof of Theorem \ref{thm-dc-lb}] Suppose there exists $\phi \in {\rm Cont}_0(M, \xi)$ and $C \geq 1$ such that $1/C \alpha_1 \preceq \phi^*\alpha_2 \preceq C \alpha_1$. By Lemma \ref{lemma-alpha}, there exists a sufficiently large $c(\alpha_2)$ such that the following relation holds, 
\[ \frac{1}{C} W^{\alpha_1} \subset \Phi_{c(\alpha_2)}^{-1}(W^{\alpha_2}) \subset C W^{\alpha_1}. \]
Then $\Phi_{c(\alpha_2)}|_{\frac{1}{C} W^{\alpha_1}}$ is a Liouville embedding from $\frac{1}{C} W^{\alpha_1}$ to $W^{\alpha_2}$, and $\Phi^{-1}_{c(\alpha_2)}|_{W^{\alpha_2}}$ is a Liouville embedding from $W^{\alpha_2}$ to $CW^{\alpha_1}$. Moreover, their composition is just the inclusion. Then, by the definition of $d_{\rm SBM}$, we know that $d_{\rm SBM}(W^{\alpha_1}, W^{\alpha_2}) \leq \ln C$. Thus we get the first conclusion. The second conclusion is trivial since by definition $d_c \leq d_{\rm SBM}$. 
\end{proof}

\subsection{Proof of Theorem \ref{thm-stability}} \label{ssect-proof-csi} \label{ssec-csi} The proof of Theorem \ref{thm-stability} relies on the contact shape invariant, which was rigorously defined in \cite{Eli91} for the first time (and see also \cite{Mul19}). Let us recall its definition first. Given a domain $U \times S^1$ of $W \times S^1$, and denote by $\tau$ the identity map in the automorphism group ${\rm Aut}(H^1(X \times S^1; \R))$. Consider the symplectization $U \times S^1 \times \R_+ \subset W \times S^1 \times \R_+$, where $r$ denotes the coordinate of $\R_+$. Note that by definition the symplectic structure on $W \times S^1 \times \R_+$ (hence, also on $U \times S^1 \times \R_+$) is $d(r (\lambda + dt))$. Fix any contact hypersurface $M$ of $W$, e.g., a unit cosphere bundle of $X$ with respect to a certain metric on $X$, and the coordinates $(s,x,t,r)$ denote a point in $W \times S^1 \times \R_+$, where $x \in M$. Consider the diffeomorphism $\Phi$ on $W \times S^1 \times  \R_+$ defined by $\Phi((s,x,t,r)) = (sr,x,t,r)$ for any point $(s,x,t,r) \in W \times S^1 \times \R_+$. It is easy to check that $\Phi$ is a symplectomorphism from $(W \times S^1 \times  \R_+, d(r(\lambda + dt)))$ to $(W \times S^1 \times  \R_+, \omega + dr \wedge dt)$. Therefore, the image $\Phi(U \times S^1 \times \R_+)$ can be regarded as a domain in the standard (symplectic) stabilization of $(W, \omega)$, i.e., $W \times T^*S^1$, with respect to the standard split symplectic structure. Now, consider the set defined as 
\begin{equation} \label{dfn-ssi}
{\rm sh}(\Phi(U \times S^1 \times \R_+); \tau) : = \left\{(a, b) \in H^1(X \times S^1; \R) \,\Bigg| \, \begin{array}{l} \mbox{there exists a Lagrangian emb.} \\ \mbox{$f: X \times S^1 \to \Phi(U \times S^1 \times \R_+)$} \\ \mbox{such that $f^*|_{H^1(X \times S^1; \R)} = \tau$} \\ \mbox{and also $f^*(\lambda + dt) = (a, b)$}\end{array} \right\}.
\end{equation}

A few remarks about the notation in the definition (\ref{dfn-ssi}) are in order. First, since $H^1(X \times S^1; \R) = H^1(X; \R) \oplus H^1(S^1; \R)$, the vector $(a, b)$ is understood as $a \in H^1(X; \R)$ and $b \in H^1(S^1; \R)$. Moreover, the equality $f^*(\lambda + dt) = (a, b)$ is understood as the closed 1-form $f^*(\lambda + dt)$ evaluated at a chosen basis of $H_1(X \times S^1; \Z)$. Second, since $\Phi$ does not change the homotopy type, $f^* = (\Phi^{-1} \circ f)^*$. Moreover, an existence of a Lagrangian embedding $f: X \times S^1 \to \Phi(U \times S^1 \times \R_+)$ is equivalent to an existence of a Lagrangian embedding $\Phi^{-1} \circ f: X \times S^1 \to U \times S^1 \times \R_+$, since $\Phi$ is symplectic. Third, since $U$ is a star-shaped domain of $W = T^*X$, the domain $U \times S^1 \times \R_+$ deformation retracts to $X \times S^1 \times \R_+$ and $H^1(U \times S^1 \times \R_+; \R) \simeq H^1(X \times S^1; \R)$, which guarantees that the condition $f^*|_{H^1(X \times S^1; \R)} = \tau$ in (\ref{dfn-ssi}) is well-defined.  

An important observation is that the set ${\rm sh}(\Phi(U \times S^1 \times \R_+); \tau)$ admits a diagonal $\R_+$-action, i.e., if $(a, b) \in {\rm sh}(\Phi(U \times S^1 \times \R_+); \tau)$, then for any fixed $r_* \in \R_+$, $(r_* a, r_* b) \in {\rm sh}(\Phi(U \times S^1 \times \R_+); \tau)$. In fact, for this $r_*$, denote by $r_* \circ$ the diagonal action on $W \times S^1 \times \R_+$ defined by $r_*\circ(s,x,t, r) : = (sr_*, x, t, rr_*)$. Then if $f: X \times S^1 \to \Phi(U \times S^1 \times \R_+)$ is a Lagrangian embedding, then 
\[ f_{r_*} : = \Phi \circ r_* \circ \Phi^{-1} \circ f: X \times S^1 \to \Phi(U \times S^1 \times \R_+) \]
is also a Lagrangian embedding. Moreover, if $f^*(\lambda + dt)  =(a, b)$, then $f_{r_*}^*(\lambda + dt) = (r_* a, r_*b)$. 

Denote by ${\rm pr}_1: H^1(X \times S^1; \R) \to H^1(X; \R)$ the natural projection. The {\it contact shape invariant of $U \times S^1$ with respect to $\tau$} is defined as 
\begin{equation} \label{dfn-csi}
{\rm csh}(U \times S^1; \tau): = {\rm pr}_1({\rm sh}(\Phi(U \times S^1 \times \R_+); \tau)/ {\mbox{diagonal $\R_+$-action}}). 
\end{equation}
As a subset of $\R^m$ where $m = \dim_{\R} H^1(X; \R)$, the contact shape invariant ${\rm csh}(\Phi(U \times S^1 \times \R_+); \tau)$ can be equivalently obtained by projecting the subset ${\rm sh}(\Phi(U \times S^1 \times \R_+); \tau) \cap \{b =1\}$ via ${\rm pr}_1$, where $b$ represents the coordinate of $\R = H^1(S^1; \R)$.

\begin{lemma} \label{csh-functor} The contact shape invariant satisfies the following functorial properties. 
\begin{itemize}
\item[(1)] If $U$ is a star-shaped domain of $W$, then ${\rm csh}(CU \times S^1; \tau) = C\cdot {\rm csh}(U \times S^1; \tau)$ for any $C >0$, where $C\cdot$ is the standard rescaling in a Euclidean space. \footnote{It is easy to see, by Lagrangian Weinstein neighborhood theorem, ${\rm csh}(CU \times S^1; \tau)$ is always an open subset of $\R^m$, and $0 \in \R^m$ is contained inside its closure.}
\item[(2)] Let $U,V$ be star-shaped domains of $W \times S^1$. If there exists a contact isotopy $\phi = \{\phi_t\}_{t \in [0,1]}$ such that $\phi_1(U \times S^1) \subset V \times S^1$, then ${\rm csh}(U \times S^1; \tau) \subset {\rm csh}(V \times S^1; \tau)$.
\item[(3)] If $U$ is a star-shaped domain of $W$, then for any $k \in \N$, 
\begin{equation} \label{k-csh}
{\rm csh}(U \times S^1; \tau) \subset k \cdot {\rm csh}((U \times S^1)/k; \tau), 
\end{equation}
where $(U \times S^1)/k = \tau_k^{-1}(U \times S^1)$ and $\tau_k: W \times S^1 \to W \times S^1$ is the covering map defined in (\ref{cover}). 
\end{itemize}
\end{lemma}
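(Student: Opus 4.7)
The strategy is to work in the symplectization $W \times S^1 \times \R_+$ with Liouville form $r(\lambda+dt)$, exploiting the diagonal $\R_+$-action built into the definition of $\rm{csh}$. I will prove (2) first (as it is the most straightforward), then (1), and finally deduce (3) from (1).

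For (2), any compactly supported contact isotopy $\{\phi_t\}$ of $(W \times S^1,\, \lambda+dt)$ with conformal factor $e^{g_t}$ admits the canonical strict symplectic lift $\tilde\phi_t(p,r) = (\phi_t(p),\, r\, e^{-g_t(p)})$ to the symplectization, preserving the Liouville $1$-form $r(\lambda+dt)$. If $\phi_1(U \times S^1) \subset V \times S^1$, then $\tilde\phi_1$ maps $U \times S^1 \times \R_+$ into $V \times S^1 \times \R_+$; conjugation by $\Phi$ yields a strict symplectomorphism of $W \times T^*S^1$ sending $\Phi(U \times S^1 \times \R_+)$ into $\Phi(V \times S^1 \times \R_+)$. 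Composing any Lagrangian embedding $f \co X \times S^1 \to \Phi(U \times S^1 \times \R_+)$ with this symplectomorphism then produces a Lagrangian embedding into $\Phi(V \times S^1 \times \R_+)$ with the same period $(a,b)$, and the induced action on $H^1(X \times S^1;\R)$ is still $\tau$ because the lift is isotopic to the identity through strict symplectomorphisms. This delivers the inclusion of shape invariants.

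For (1), I would transport Lagrangians using the Liouville flow $\phi_L^{\ln C}$ on $W$, which sends $U$ to $CU$ and satisfies $(\phi_L^{\ln C})^*\lambda = C\lambda$. Given a Lagrangian embedding $f \co X \times S^1 \to \Phi(U \times S^1 \times \R_+)$ representing $(a,b)$, one constructs an embedding $g \co X \times S^1 \to \Phi(CU \times S^1 \times \R_+)$ by combining the $W$-Liouville flow with a compensating rescaling $r \mapsto r/C$ of the $\R_+$-factor (so as to match the two regions), and then applying a diagonal $\R_+$-action to restore the discrepancy between the pulled-back and original Liouville forms. The resulting embedding represents the class $(Ca, b)$ modulo the $\R_+$-action; the reverse inclusion is obtained by running the same argument with $1/C$ in place of $C$, yielding the equality $\rm{csh}(CU \times S^1; \tau) = C\cdot \rm{csh}(U \times S^1; \tau)$. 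For (3), note that for a split domain one has $(U \times S^1)/k = \tau_k^{-1}(U \times S^1) = \phi_L^{-\ln k}(U) \times S^1$, since the $t \mapsto kt$ component of $\tau_k$ acts trivially on the $S^1$-factor of a split domain; then (1) applied with $C = 1/k$ gives $\rm{csh}((U \times S^1)/k; \tau) = (1/k)\cdot \rm{csh}(U \times S^1; \tau)$, and multiplying by $k$ yields the inclusion (in fact equality) asserted in (3).

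The main obstacle is (1). Because no honest symplectomorphism of the ambient symplectization $W \times S^1 \times \R_+$ relates $U \times S^1 \times \R_+$ with $CU \times S^1 \times \R_+$ for $C \neq 1$ (every natural combination of Liouville flows on the $W$- and $\R_+$-factors either preserves the region $\Phi(U \times \ldots)$ or fails to be symplectic), the transported submanifold is not automatically Lagrangian on the nose. Care must therefore be taken to track how the Liouville form transforms under the non-symplectic diffeomorphism, and to verify that the diagonal $\R_+$-action in the definition of $\rm{csh}$ precisely compensates for the mismatch so that the resulting class genuinely lies in $\rm{sh}(\Phi(CU \times S^1 \times \R_+))$ with $H^1(X;\R)$-component rescaled by exactly $C$.
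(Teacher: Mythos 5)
Your argument for item (2) is correct and is surely the ``straightforward'' proof the authors had in mind: the strict lift $\tilde\phi_t(p,r)=(\phi_t(p),re^{-g_t(p)})$ preserves $r(\lambda+dt)$ exactly, so it carries Lagrangians to Lagrangians with the same periods and, being isotopic to the identity, preserves the condition $f^*|_{H^1}=\tau$. The problem is item (1), which is exactly the step you flag in your last paragraph and then leave unresolved. Write $R_V:=\Phi(V\times S^1\times\R_+)=\bigcup_{r>0} rV\times S^1\times\{r\}$ and consider the three maps at your disposal: $S\colon(w,t,r)\mapsto(\phi_L^{\ln C}(w),t,r)$, which carries $R_{U\times S^1}$ onto $R_{CU\times S^1}$ but pulls $\omega+dr\wedge dt$ back to $C\omega+dr\wedge dt$; $T_C\colon(w,t,r)\mapsto(w,t,r/C)$, which also carries $R_{U\times S^1}$ onto $R_{CU\times S^1}$ but pulls the form back to $\omega+\tfrac1C dr\wedge dt$; and the diagonal action $D_\mu$, which is genuinely conformally symplectic but \emph{preserves} each region $R_{V}$. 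Since $S=D_C\circ T_C$, no composition of these maps is conformally symplectic and simultaneously moves $R_{U\times S^1}$ to $R_{CU\times S^1}$ when $C\neq1$: a direct check shows that if $f^*(\omega+dr\wedge dt)=0$ then $(S\circ f)^*(\omega+dr\wedge dt)=(C-1)\,f^*(\omega)\neq0$ in general, and similarly for $T_C\circ f$. So the transported submanifold is not Lagrangian, the diagonal action cannot repair this, and the claimed periods $(Ca,b)$ are not produced. A proof of (1) has to work at the level of the embedding itself (deforming $f$, not just pushing it forward by an ambient rescaling); as written, your (1) is a gap rather than a proof.

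This matters doubly because you derive (3) from (1). Your split-domain identity $(U\times S^1)/k=\tau_k^{-1}(U\times S^1)=\phi_L^{-\ln k}(U)\times S^1$ is correct (it is stated in the paper and is consistent with Example \ref{ex-torus}), so your route would even give equality in (\ref{k-csh}) --- but only once (1) is established. The paper instead proves (3) by a self-contained argument that bypasses (1) entirely: since $\tau_k^*(\lambda+dt)=k(\lambda+dt)$, a Lagrangian embedding $f\colon X\times S^1\to U\times S^1\times\R_+$ with $f^*(\lambda+dt)=(a,b)$ lifts through the covering $\tau_k$ to a Lagrangian embedding into $(U\times S^1)/k\times\R_+$ whose pullback of $k(\lambda+dt)$ is $(a,kb)$; normalizing by the diagonal $\R_+$-action then yields $\tfrac1k\,{\rm csh}(U\times S^1;\tau)\subset{\rm csh}((U\times S^1)/k;\tau)$. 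Only one direction of lifting is available (not every Lagrangian upstairs descends), which is why the paper gets an inclusion rather than an equality. You should either prove (1) honestly or replace your deduction of (3) by this covering argument, which is independent of (1).
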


\begin{proof} The proof of (1) and (2) are straightforward, so we only prove (3) here. By definition, $\tau_k^{*}(\lambda + dt) = k \cdot (\lambda + dt)$. Therefore, $(U \times S^1)/k$ is a domain of $(W \times S^1 \times \R_+, k (d\lambda + dr \wedge dt))$, where the symplectic form is changed by the factor $k$. If there exists a Lagrangian embedding $f: X \times S^1 \to U \times S^1 \times \R_+$, then it lifts to a Lagrangian embedding $\tilde{f}^{(k)}: X \times S^1 \to (U \times S^1)/k \times \R_+$. Moreover, if $f^*(\lambda + dt) = (a, b)$, then $(\tilde{f}^{(k)})^*(k\lambda + k dt) = (a, kb)$. Note that the factor $a$ does not change since by definition the $U$-component of the embedding $\tilde{f}^{(k)}$ is rescaled by $\frac{1}{k}$. This implies that if $(a,b) \in {\rm sh}(\Phi(U \times S^1 \times \R_+); \tau)$, then $(a, kb) \in {\rm sh}(\Phi((U \times S^1)/k \times \R_+); \tau)$. Then, modulo the diagonal $\R_+$-action, we get the desired conclusion. \footnote{It can be confusing for the factor in item (3) in Lemma \ref{csh-functor} - whether it is $k$ or $\frac{1}{k}$. From our proof, if $(a,1) \in {\rm sh}(\Phi(U \times S^1 \times \R_+); \tau)$, then $(a, k) \in {\rm sh}(\Phi((U \times S^1)/k \times \R_+); \tau)$. Therefore, in order to obtain ${\rm csh}((U \times S^1)/k; \tau)$ via intersecting $``\{b=1\}"$, we need to collect $\frac{1}{k}a$. In other words, $\frac{1}{k} {\rm csh}(U \times S^1; \tau) \subset {\rm csh}((U \times S^1)/k; \tau)$. This is the desired conclusion in (\ref{k-csh}) in Lemma \ref{csh-functor}.}\end{proof}

In general, we only get the inclusion in (\ref{k-csh}), since a Lagrangian embedding always admits a lift, but not every Lagrangian embedding $f: X \times S^1 \to  (U \times S^1)/k \times \R_+$ can be pushed down via $\tau_k$ to be a well-defined Lagrangian embedding from $X \times S^1 \to U \times S^1 \times \R_+$. In order to do so, we require some periodicity condition of the embedding $f$. However, in the following example, we will examine a special case where (\ref{k-csh}) is indeed an equality. 

\begin{ex} \label{ex-torus} Let $X = \T^n$ (so $W = T^*\T^n$), and $U = \T^n \times A_U$ for some open subset $A_U$ of $\R^n$ containing $0 \in \R^n$. Then we claim that 
\begin{equation} \label{csh-torus}
{\rm csh}(U \times S^1; \tau) = k \cdot {\rm csh}((U \times S^1)/k; \tau) = A_U. 
\end{equation}
We will show that ${\rm csh}(U \times S^1; \tau) = A_U$ and ${\rm csh}((U \times S^1)/k; \tau) = \frac{1}{k} \cdot A_U$. By definition, 
\begin{align*}
\Phi(U \times S^1 \times \R_+) & = \Phi(\T^n \times A_U \times S^1 \times \R^+) \\
& = \{(q, rp, t, r) \in W \times S^1 \times \R_+ \,| \, (q,p,t,r) \in \T^n \times A_U \times S^1 \times \R_+ \}\\
& = \{(q, t, rp, r) \in \T^{n+1} \times (\R^n \times \R_+) \,| \, p \in A_U, r \in \R_+\}
\end{align*}
By Sikorav's theorem (see \cite{Sik89} or \cite{Eli91}) (which eventually comes from Gromov's famous theorem on the exact Lagrangian embeddings of cotangent bundles in \cite{Gro85}), we know that 
\[ {\rm sh}(\Phi(U \times S^1 \times \R_+); \tau) = \{(rp, r) \in \R^n \times \R_+ \,| \, p \in A_U, r\in \R_+\}.\]
Therefore, ${\rm csh}(U \times S^1; \tau) = A_U$.

The same argument shows that 
\begin{align*}
{\rm sh}(\Phi((U \times S^1)/k \times \R_+); \tau) & = \left\{(k r p, kr) \in \R^n \times \R_+ \,\bigg| \, p \in \frac{1}{k}\cdot A_U, r \in \R_+\right\}\\
&= \{(rp, kr) \in \R^n \times \R_+ \,| \, p \in A_U, r\in \R_+\}.
\end{align*}
Therefore, ${\rm csh}(U \times S^1; \tau) = \frac{1}{k} \cdot A_U$.
\end{ex}

Now, we are ready to give the proofs of Theorem \ref{thm-stability} and Corollary \ref{cor-large-scale-torus}.
 
\begin{proof} [Proof of Theorem \ref{thm-stability}] By definition, for any $\ep>0$, there exists a pair $(k,l) \in \N^2$ such that $\frac{k}{l} \leq \rho_c(U \times S^1, V \times S^1) + \ep$. In other words, there exists a contact isotopy of $W \times S^1$, denoted by $\phi = \{\phi_t\}_{t \in [0,1]}$, such that $\phi_1((U \times S^1)/k) \subset (V \times S^1)/l$. By (2) in Lemma \ref{csh-functor}, ${\rm csh}((U \times S^1)/k; \tau) \subset {\rm csh}((V \times S^1)/l; \tau)$. Moreover, by (\ref{csh-torus}) in Example \ref{ex-torus}, we know that 
\[ A_U/k \subset A_V/l, \,\,\,\,\mbox{which implies} \,\,\,\, A_U \subset \frac{k}{l} A_V. \]
Note that $\delta(\cdot, \cdot)$ is symmetric, and it can be written as $\delta(A,B) = \max\{\delta_-(A,B), \delta_+(A,B)\}$, where $\delta_-(A,B) = \inf\{C>1\,| \, A \subset C\cdot B\}$ and $\delta_+(A,B) = \inf\{C>1 \,| \, B \subset C\cdot A\}$. Therefore, $\delta_-(A_U, A_V) \leq \frac{k}{l} \leq \rho_c(U \times S^1, V \times S^1) + \ep$.  Then a symmetric argument leads to the desired conclusion. \end{proof} 

\begin{proof}[Proof of Corollary \ref{cor-large-scale-torus}]  Given $k \in \N$, consider a morphism $\Psi_k: \R^k \to \mathcal S(T^*\T^n)$ defined along the following few steps. First, observe that there exists a quasi-isometric embedding $\mathbb L: \R \to [0, \infty)^2$ by an ``L-shape''. Explicitly, for $x<0$, $\mathbb L(x) = (1, -x+1)$, and for $x \geq 0$, $\mathbb L(x) = (1+x, 1)$. In general, for $x = (x_1, ..., x_k) \in \R^k$, for brevity still use $\mathbb L$ to denote the map $\mathbb L: \R^k \to [0, \infty)^{2k}$ defined by $\mathbb L(x) = (\mathbb L(x_1), ..., \mathbb L(x_k))$. It is easy to check that 
\begin{equation} \label{L-est}
\frac{1}{2} |x-y|_{\infty} \leq |\mathbb L(x) - \mathbb L(y)|_{\infty} \leq |x-y|_{\infty}. 
\end{equation}
Therefore, we are reduced to construct a quasi-isometric embedding $\Psi_k$ from the metric space $([0, \infty)^{2k}, |\cdot|_{\infty})$ to $(S(T^*\T^n),d_c)$. Second, since $n \geq 2$, $H^1(\T^n; \R)$ contains a 2-dimensional Euclidean space $\R^2$, say, in coordinates $x_1$ and $x_2$. In this $\R^2$, choose $2k$-many non-collinear directions represented by the unit vectors $e_1, ..., e_{2k}$. Choose a sufficiently large constant $C_0$, and denote by $\ell_i$ the line segment $\{t e_i\,| \, t \in [0,C_0]\}$ for each $i \in \{1, ..., 2k\}$. The 1-skeleton of the $x_1x_2$-plane formed by the union of the line segments $\{\ell_i\}_{i=1,..., 2k}$ is denoted by $X_{\rm base}$. For any $v = (v_1, ..., v_{2k}) \in [0, \infty)^{2k}$, the notation $v \cdot X_{\rm base}$ means a new 1-skeleton where $\ell_i$ is rescaled to $e^{v_i} \ell_i$ for each $i \in\{1, ..., 2k\}$. Moreover, for the rest $(n-2)$-dimensional Euclidean subspace $\R^{n-2} \subset H^1(\T^n; \R)$, consider the open higher dimensional unit cube $[0,1)^{n-2} \subset \R^{n-2}$. Third, choose an appropriate width $\ep(v)$ of the 1-skeleton $v \cdot X_{\rm base}$ in the $x_1x_2$-plane such that the resulting open region denoted by $\Omega_{v}$ satisfies  
\begin{equation} \label{normalization}
{\rm vol}_{\R^n}(\Omega_{v} \times [0,1)^{n-2}) = 1 / {\rm vol}_g (\T^n)
\end{equation}
for a fixed Riemannian metric $g$ on the base $\T^n$. Denote $A_{v} : = \Omega_{v} \times [0,1)^{n-2}$. Define $\Psi_k: [0, \infty)^{2k} \to \mathcal S(T^*\T^n)$ by $\Psi_k(v) = \T^n \times A_{v}$. 

Now, let us confirm that $\Psi_k$ is the desired quasi-isometric embedding. First, for two vectors $v, w \in [0, \infty)^{2k}$, by Proposition \ref{prop-cbm-c} and Theorem \ref{thm-stability}, 
\[ \delta(A_{v}, A_{w}) \leq d_c(\T^n \times A_{v}, \T^n \times A_{w}) = d_c(\Psi_k(v), \Psi_k(w)).\]
By our construction, $\delta(A_{v}, A_{w}) = |v - w|_{\infty}$ since each direction $\ell_i$ must be rescaled by $|v_i - w_i|$ in order to satisfy the inclusion relations in the definition of $\delta(\cdot, \cdot)$. Therefore, $|v - w|_{\infty} \leq d_c(\Psi_k(v), \Psi_k(w))$. On the other hand, for a sufficiently large $C_0$, the width $\ep(v)$ is sufficiently small for every $v \in [0, \infty)^{2k}$. Then, the volume ${\rm vol}_{\R^n}(\Omega_{v} \times [0,1)^{n-2})$ can be computed as 
\begin{equation} \label{vol}
{\rm vol}_{\R^n}(\Omega_{v} \times [0,1)^{n-2}) = \ep(v)C_0 \cdot \sum_{i=1}^{2k} e^{v_i} + o(\ep(v)). 
\end{equation}
For $v, w \in [0, \infty)^{2k}$, without loss of generality, assume that $|v - w|_{\infty} = w_1 - v_1 \geq 0$. In particular, for every $i \in \{1, ..., 2k\}$, $w_i - v_i \leq |v_i - w_i| \leq w_1 - v_1$. For $\Psi_k(v)$ and $\Psi_k(w)$, we claim that choosing the constant $C : = C_1 e^{w_1 - v_1}$ for a constant $C_1 >1$ yields $\Psi_k(v) \subset C\Psi_k(w)$. Then a symmetric argument implies the desired conclusion. 

To prove the claim, for brevity, let us neglect the approximation term $o(\ep(v))$ in (\ref{vol}), since this can be adjusted by a proper constant $C_1>1$. From (\ref{vol}) and (\ref{normalization}), we know that $\ep(v) \cdot \sum_{i=1}^{2k} e^{v_i} = \ep(w) \cdot \sum_{i=1}^{2k} e^{w_i}$. Observe that $e^{w_1 - v_1} \geq \frac{\sum_{i=1}^{2k} e^{w_i}}{\sum_{i=1}^{2k} e^{v_i}}$. \footnote{In fact, denote by $a_i = e^{w_i}$ and $b_i = e^{v_i}$. By our hypothesis, $\frac{a_i}{b_i} = e^{w_i - v_i} \leq e^{w_1 - v_1} = \frac{a_1}{b_1}$ for each $i \in \{1, ..., 2k\}$. Set $M = \frac{a_1}{b_1}$, then $a_i \leq M b_i$ for each $i \in \{1, ..., 2k\}$. Then sum over all such $i$, and we get $\sum_{i=1}^{2k} a_i \leq M \sum_{i=1}^{2k} b_i$, which implies that $\frac{\sum_{i=1}^{2k} a_i}{\sum_{i=1}^{2k} b_i} \leq M$.} Then 
\begin{equation*}
e^{w_1 - v_1} \cdot \ep(w) = e^{w_1 -v_1} \cdot \frac{\sum_{i=1}^{2k} e^{v_i}}{\sum_{i=1}^{2k} e^{w_i}} \ep(v) \geq \ep(v).
\end{equation*}
This implies that $A_{v} \subset C A_{w}$, and thus we get the claim. \end{proof}

\subsection{Proof of Theorem \ref{rgr-cbm} }\label{subsec-pf-rgr-cbm}

\begin{proof} [Proof of Theorem \ref{rgr-cbm}] It suffices to prove that 
\begin{equation} \label{rho-comp}
\rho^-_{\geq_+}([\phi], [\psi]) \geq \rho_c(U([\phi]), U([\psi])),
\end{equation}
where $\rho^-_{\geq_+}(a,b) = \rho^+_{\geq_+}(b,a)$ for any inputs $a$ and $b$, and $\rho^+_{\geq_+}$ and $\rho_c$ are defined in (\ref{dfn-rgr}) and (\ref{rho-c}), respectively. Then the desired conclusion comes from a symmetric argument. For any $\ep>0$, there exists a pair $(k,l) \in \N^2$ such that $[\psi]^k = [\psi^k] \geq_+ [\phi]^l = [\phi^l]$, and $\frac{k}{l} \leq \rho^-_{\geq_+}([\phi], [\psi]) + \ep$. Then, by the definition of the partial order $\geq_+$, there exist a positive contact isotopy $\psi'$, homotopic to $\psi^k$, and a positive contact isotopy $\phi'$, homotopic to $\phi^l$, such that $\psi' \geq_+ \phi'$. For any fixed contact 1-form $\alpha$, the corresponding contact Hamiltonian functions satisfy $h_{\alpha}(\psi') \geq h_{\alpha}(\phi')$. Passing to the star-shaped domains defined in (\ref{geo-ss}), we know that, for any $s>0$, 
\begin{equation*}
\left\{(s,x,t) \in SM \times S^1 \,\bigg| \, h_{\alpha}(\phi') < \frac{1}{s} \right\} \subset \left\{(s,x,t) \in SM \times S^1 \,\bigg| \, h_{\alpha}(\psi') < \frac{1}{s} \right\},
\end{equation*}
which yields that $U(\phi') \subset U(\psi')$. Meanwhile, there exist ambient contact isotopies on $\hat{W} \times S^1$ denoted by $\Phi = \{\Phi_t\}_{t \in [0,1]}$ and $\Psi = \{\Psi_t\}_{t \in [0,1]}$ such that $U(\phi') = \Phi_1(U(\phi^k))$ and $U(\psi') = \Psi_1(U(\psi^l))$. Then the contact isotopy $\Phi^{-1} \circ \Psi$ satisfies 
\begin{equation} \label{inclusion}
(\Psi^{-1} \circ \Phi)_1(U(\phi^k)) = \Psi_1^{-1} (\Phi_1(U(\phi^k))) = \Psi_1^{-1}(U(\phi')) \subset U(\psi^l).
\end{equation} 
Moreover, assume that both $h_{\alpha}(\phi)$ and $h_{\alpha}(\psi)$ are 1-periodic. Then $\phi^k$ and $\psi^l$ can be generated by $k\cdot h_{\alpha}(x,kt)$ and $l \cdot h_{\alpha}(x, lt)$, respectively. It is important to observe that 
\[ U(\phi^k) = \tau_k^{-1} (U(\phi)) = U(\phi)/k \,\,\,\,\mbox{and}\,\,\,\, U(\psi^l) = \tau_l^{-1} (U(\psi)) = U(\psi)/l, \]
where $\tau_k$ and $\tau_l$ are the covering maps defined in (\ref{cover}).  Therefore, we know that 
\[ \rho_c(U(\phi), U(\psi)) \leq \frac{k}{l} \leq \rho^-_{\geq_+}([\phi], [\psi]) + \ep. \]
Letting $\ep$ go to zero, together with Theorem \ref{lemma-ccbm-property}, we get the desired conclusion. 
\end{proof}

\subsection{Proof of Theorem \ref{prop-sheaf-order}} \label{subsec-pf-prop-sheaf-order} In preparation for our discussion, let us recall some standard notations. The sheaf convolution denoted by $``\circ|_I"$ (see (1.13) in \cite{GKS12}) provides a well-defined bi-operator on $\D^b(\k_{X \times X \times I})$. Explicitly, consider the projection $p_{ijI}: X_1 \times X_2 \times X_3 \times I \to X_i \times X_j \times I$, where $X_i = X$ for $i=1,2,3$. The sheaf convolution is defined by
\[ \F \circ|_I \G : = {\rm R}{p_{13I}}_! (p_{12I}^{-1} \F \otimes^L p_{23I}^{-1} \G), \]
for any two elements $\F, \G \in \D^b(\k_{X \times X \times I})$, where $\otimes^L$ means the external product. Under this sheave convolution, the identity element in $\D^b(\k_{X \times X \times I})$ is $\k_{\Delta \times I}$ where $\Delta \subset X \times X$ is the diagonal. When $I$ is empty, we simply write the corresponding sheaf convolution as $\F \circ \G$, which is defined as above without the $I$-components. For an element $\F \in \D^b(\k_{X \times X \times I})$, its ``formal inverse'' (with respect to the sheaf convolution) denoted by $\F^{-1} \in \D^b(\k_{X \times X \times I})$ is defined by (see (1.21) or page 216 in \cite{GKS12})
\begin{equation} \label{sheaf-inverse}
\F^{-1} : = (v \times \mathds{1}_I)^{-1} {\rm R} \mathcal{H}om(\F, \omega_M \boxtimes \k_M \boxtimes \k_I),
\end{equation}
where $v: X \times X \to X \times X$ is the swap map, i.e., $v(x_1,x_2) = (x_2, x_1)$ for any $(x_1,x_2) \in X \times X$, and $\omega_X \in \D^b(\k_X)$ is the dualizing complex on $M$, which is isomorphic to the orientation sheaf shifted by the dimension (see Definition 3.1.16 in \cite{KS90}). We call $\F^{-1}$ the ``formal inverse'' since certain conditions on $\F$ are necessary in order to have $\F \circ|_I \F^{-1} = \F^{-1} \circ|_I \F \simeq \k_{\Delta \times I}$ (see Proposition 1.14 in \cite{GKS12}). Any such $\F$ which also satisfies the normalization condition that $\F|_{t=0} = \k_{\Delta}$ is called {\it admissible}, and the set of all admissible $\F \in \D^b(\k_{X \times X \times I})$ is denoted by $\D_{\rm adm}^b(\k_{X \times X \times I})$. In Example \ref{ex-sq} below, we will see that $\D_{\rm adm}^b(\k_{X \times X \times I})$ in fact contains interesting elements. Finally, for any $k \in\N$, recall that  $\F^k : = \F \circ|_I \cdots \circ|_I \F$, the sheaf convolution of $k$ many $\F$.

Another important ingredient is the singular support of an element $\F \in \D^b(\k_{X \times X \times I})$, denoted by $SS(\F)$. It is a conical subset of $T^*(X \times X \times I) = T^*X \times T^*X \times T^*I$. The explicit definition of $SS$ will not be necessary here, and we refer to Chapter V in \cite{KS90} for a systematic study of $SS$. Recall that for two subset $\Lambda_1, \Lambda_2 \subset T^*(X \times X \times I)$, the {\it correspondence} $\Lambda_1 \circ|_I \Lambda_2$, which is a subset of $T^*(X \times X \times I)$, is defined by 
{\small{\begin{equation} \label{corres}
\Lambda_1 \circ|_I \Lambda_2 : = \left\{((x_1, \xi_1), (x_3, \xi_3), (t, \tau))\, \bigg| \, \exists (x_2, \xi_2), (t, \tau_1), (t, \tau_2) \,\,\mbox{s.t.} \,\, \begin{array}{l} ((x_1, \xi_1), (x_2, - \xi_2), (t, \tau_1)) \in \Lambda_1 \\ ((x_2, \xi_2), (x_3, \xi_3), (t, \tau_2)) \in \Lambda_2 \\ \tau = \tau_1 + \tau_2 \end{array} \right\}.
\end{equation}}}

The following two properties of $SS$ are particularly useful to us. Proposition \ref{prop-ss-1} is an important proposition for $SS$, since it reflects the ``geometry'' of $SS$. Proposition \ref{prop-ss-2} shows that the singular support of a sheaf can put strong restrictions on the behavior of the sheaf itself. The proof of Proposition \ref{prop-ss-2} is in fact quite difficult, where the main step is based on the microlocal Morse lemma (see Corollary 5.4.19 in \cite{KS90}). 

\begin{prop} \label{prop-ss-1} For any $\F, \G \in \D^b(\k_{X \times X \times I})$, under a certain condition ((1.14) in \cite{GKS12}), $SS(\F \circ|_I \G) \subset SS(\F) \circ|_I SS(\G)$, where the $\circ|_I$ on the right hand side is defined in (\ref{corres}). \end{prop}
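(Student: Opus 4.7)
The plan is to reduce the claim to the standard functorial estimates for singular support under inverse image, tensor product, and proper direct image, as developed in Chapter 5 of Kashiwara--Schapira. Recall that by definition $\F \circ|_I \G = Rp_{13I!}(p_{12I}^{-1}\F \otimes^L p_{23I}^{-1}\G)$, so the estimate on $SS(\F \circ|_I \G)$ should be obtained by chaining together three separate estimates along the submersions $p_{12I}$, $p_{23I}$ and the direct image along $p_{13I}$.

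First, I would invoke the inverse image estimate: for a submersion $p\co Y \to Z$ one has $SS(p^{-1}\F) = p_d p_\pi^{-1}(SS(\F))$, where $p_\pi$ and $p_d$ are the natural projection and transpose-of-differential maps on cotangent bundles. This controls $SS(p_{12I}^{-1}\F)$ and $SS(p_{23I}^{-1}\G)$ as conical subsets of $T^*(X\times X\times X\times I)$. Second, I would apply the tensor product estimate $SS(\F' \otimes^L \G') \subset SS(\F') + SS(\G')$, where the sum is taken fiberwise over $X\times X\times X\times I$; in our setting the $\R$-coefficient $\tau$ in $T^*I$ adds, while over $T^*X_2$ the two covectors combine with opposite signs because $p_{12I}$ sees $X_2$ with its natural orientation whereas $p_{23I}$ sees it on the other side. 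Third, I would apply the proper pushforward estimate $SS(Rq_! \mathcal{H}) \subset q_\pi q_d^{-1}(SS(\mathcal{H}))$ to $q = p_{13I}$, which integrates out the $X_2$ fiber and forces $\xi_2 = 0$ in the contribution from $X_2$, leaving exactly a witnessing covector $(x_2, \xi_2)$ in the middle slot of (\ref{corres}).

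Putting these three steps together and tracking how the $T^*I$ component transforms, a covector $((x_1,\xi_1),(x_3,\xi_3),(t,\tau)) \in SS(\F \circ|_I \G)$ is produced from a pair of covectors in $SS(\F)$ and $SS(\G)$ at a common $(x_2,\xi_2)$ and a common base point $t \in I$, with $\tau = \tau_1 + \tau_2$ splitting according to the tensor estimate; this is precisely the definition (\ref{corres}) of $SS(\F) \circ|_I SS(\G)$. The sign convention $(x_2, -\xi_2)$ in the first factor of (\ref{corres}) is the manifestation of the opposite-coorientation in the tensor product step noted above.

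The main obstacle is neither the three estimates themselves, which are standard, nor the final bookkeeping, but rather verifying that the proper pushforward estimate actually applies to $p_{13I}$. The map $p_{13I}$ is not proper in general; one needs $p_{13I}$ to be proper on $\mathrm{supp}(p_{12I}^{-1}\F \otimes^L p_{23I}^{-1}\G)$, and this is precisely the content of the ``certain condition'' (1.14) of \cite{GKS12}, which amounts to a properness/transversality hypothesis on the pair $(\F, \G)$. Once this hypothesis is checked, the above chain of inclusions goes through formally, and one may simply cite Proposition~5.4.4 of \cite{KS90} for the individual microlocal estimates rather than re-deriving them.
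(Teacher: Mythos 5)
Your argument is correct, and it is the standard derivation of the kernel-composition estimate by chaining the three basic microlocal estimates of \cite{KS90} (inverse image along a submersion, tensor product, proper direct image). The paper does not actually prove Proposition \ref{prop-ss-1}; it quotes it as a known result from \cite{GKS12}, where it is precisely the composition estimate attached to condition (1.14) and is deduced in exactly the way you describe, so your sketch supplies the proof that the paper omits. One bookkeeping correction: the requirement that the two $X_2$-covectors be opposite (whence the $(x_2,-\xi_2)$ in the first slot of (\ref{corres})) does not come from the tensor-product step, which simply adds covectors with no sign; it is forced by the proper direct image step, since $q_d^{-1}$ for $q=p_{13I}$ selects those covectors on $X_1\times X_2\times X_3\times I$ whose restriction to the fibers of $p_{13I}$ (i.e.\ whose total $T^*X_2$-component) vanishes. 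You are also right that condition (1.14) of \cite{GKS12} is exactly what legitimizes the two delicate steps: properness of $p_{13I}$ on the support of the tensor product, together with the non-characteristicity needed for the tensor-product estimate to hold as stated.
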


\begin{prop} \label{prop-ss-2} Given $\F \in \D^b(\k_{X \times X \times I})$, assume that $SS(\F) \subset T^*(X \times X) \times 0_I$, then $\F \simeq p^{-1} {\rm R}p_*\F$, where $p: X \times X \times I \to X \times X$ is the projection. \end{prop}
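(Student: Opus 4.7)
The plan is to verify that the counit $\eta : p^{-1} {\rm R}p_* \F \to \F$ of the adjunction $(p^{-1}, {\rm R}p_*)$ is a quasi-isomorphism by checking at stalks. Fix a point $z_0 = (x_0, r_0) \in X \times X \times I$. Computing stalks,
\[
(p^{-1}{\rm R}p_*\F)_{z_0} \;=\; \varinjlim_{V \ni x_0} {\rm R}\Gamma(V \times I;\,\F), \qquad \F_{z_0} \;=\; \varinjlim_{V,\,J} {\rm R}\Gamma(V \times J;\,\F),
\]
where $V$ runs over open neighborhoods of $x_0$ in $X \times X$ (taken with compact closure) and $J$ runs over open sub-intervals of $I$ containing $r_0$. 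Under these identifications, $\eta_{z_0}$ is induced by the natural restriction ${\rm R}\Gamma(V \times I;\,\F) \to {\rm R}\Gamma(V \times J;\,\F)$, so the task reduces to showing that these restrictions become isomorphisms after passing to the direct limits.

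The key tool is the microlocal Morse lemma (Corollary 5.4.19 of \cite{KS90}), applied to the coordinate function $\psi(x_1, x_2, r) = r$ on $V \times I$. Its differential $d\psi$ is the covector $(0,0,1) \in T^*(X \times X) \times T^*I$; since the hypothesis $SS(\F) \subset T^*(X \times X) \times 0_I$ precisely rules out any nonzero covector in $SS(\F)$ with a nontrivial $T^*I$-component, no multiple $\tau \cdot d\psi(z)$ with $\tau \neq 0$ can lie in $SS(\F)$. Hence $\psi$ is non-characteristic for $\F$ everywhere on $V \times I$, and the Morse lemma yields
\[
{\rm R}\Gamma(V \times (I \cap \{r < b\});\,\F) \;\xrightarrow{\;\sim\;}\; {\rm R}\Gamma(V \times (I \cap \{r < a\});\,\F)
\]
for all $a < b$ in $I$. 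Running the same argument with $-\psi$ handles propagation in the opposite direction; combining the two shows that for any open sub-interval $J \subset I$ containing $r_0$, the restriction ${\rm R}\Gamma(V \times I;\,\F) \to {\rm R}\Gamma(V \times J;\,\F)$ is an isomorphism. Passing to the direct limit over $V$ and $J$ then identifies the two stalks above through $\eta$, completing the stalk-level verification.

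The principal technical obstacle is the properness hypothesis in Corollary 5.4.19 of \cite{KS90}: $\psi$ must be proper on ${\rm supp}(\F|_{V \times I})$, yet $V \times I$ is generally not relatively compact in $X \times X \times I$. I would resolve this by shrinking $V$ to have compact closure in $X \times X$ and, on neighborhoods of $z_0$, replacing $\F$ by the cut-off sheaf $\F \otimes p^{-1}\k_{\overline V}$ (equivalently, by working with $j_! j^{-1}\F$ for $j : V \times I \hookrightarrow X \times X \times I$); the modification only perturbs the singular support above $\partial V \times I$, which is disjoint from $z_0$, so neither the stalk computation nor the non-characteristic condition near $z_0$ is affected. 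With this bookkeeping in place, the microlocal Morse lemma applies on each compact slab $V \times [a,b] \subset V \times I$, and assembling the resulting isomorphisms delivers the desired quasi-isomorphism $\eta$.
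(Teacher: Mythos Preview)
The paper does not actually give a proof of this proposition; it only states that ``the main step is based on the microlocal Morse lemma (see Corollary 5.4.19 in \cite{KS90}).'' Your argument invokes precisely that lemma, so your approach is exactly the one the paper points to. The stalkwise strategy and the use of $\psi(x_1,x_2,r)=r$ together with $-\psi$ are the standard way to run this, and your identification of the properness hypothesis as the only real technical wrinkle is accurate.

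One small sharpening of your properness fix: you write that tensoring with $p^{-1}\k_{\overline V}$ ``only perturbs the singular support above $\partial V \times I$, which is disjoint from $z_0$,'' but the Morse lemma needs the non-characteristic condition along the entire level band $\{a \le \psi < b\}$, not just near $z_0$. The cleanest way around this in the present setting is to remember that $X$ is a \emph{closed} manifold, so $X\times X$ is compact; you may simply take $V = X\times X$ and apply the Morse lemma globally on $X\times X\times I$, where $\psi^{-1}([a,b])$ is automatically compact and no cut-off is needed. This yields ${\rm R}\Gamma(X\times X\times I;\F)\simeq {\rm R}\Gamma(X\times X\times J;\F)$ for every subinterval $J$, and a sheafified version (or the same argument applied after tensoring with $p^{-1}\k_{\overline V}$ for arbitrary compact $\overline V$, now with the SS perturbation confined to $\partial V$ lying \emph{outside} the open set on which you compare sections) gives the stalk statement you want.
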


The following example summarizes the main result from \cite{GKS12}, which serves as a supporting example, with the origin in contact geometry, of the elements in $\D^b(\k_{X \times X \times I})$.

\begin{ex}[Sheaf quantizations \cite{GKS12}]\label{ex-sq} This example will demonstrate a family of elements in $ \D_{\rm adm}^b(\k_{X \times X \times I})$ which comes from contact geometry. Let $X$ be a closed manifold and $g$ be a metric on $X$. Denote by $M = S_g^*X$ the unit co-sphere bundle, and it admits a contact structure $\xi$ which is naturally induced from the canonical symplectic structure on $T^*X$. We have seen above that a contact isotopy $\phi = \{\phi_t\}_{t \in I}$ lifts to a 1-homogeneous Hamiltonian isotopy $\Phi = \{\Phi_t\}_{t \in I}$ on the symplectization $SM = \dot T^*M := T^*M \backslash 0_M$. The main result in \cite{GKS12} says that for any such $\Phi: I \times \dot T^*M \to \dot T^*M$, there exists a unique 
element $\K_{\phi} \in D_{\rm adm}^b(\k_{X \times X \times I})$, called the {\it sheaf quantization of $\phi$}, such that $SS(\K_{\phi})$ satisfies 
\begin{align*}
SS(\K_{\phi}) & \subset \Lambda_{\Phi} \cup 0_{X \times X \times I} \\
& = \{((x,\xi), - \Phi_t(x,\xi), (t, - H_t(\Phi_t(x,\xi)))) \,|\, x \in X, \, t \in I\} \cup 0_{X \times X \times I}, 
\end{align*}
where $\Lambda_{\Phi}$ is called the Lagrangian suspension of the (symplectic) Hamiltonian isotopy $\Phi$, and $H_t$ is the (symplectic) Hamiltonian function of $\Phi$. More explicitly, assume that the contact isotopy $\phi$ is generated by a contact Hamiltonian function $h_t: I \times M \to \R$, then $H_t = s \cdot h_t$, where $s$ denotes the $\R_+$-component of the symplectization of $M$. Observe that $\K_{\phi}$ is a dominant if and only if the contact Hamiltonian function $h_t$ of $\phi$ is strictly positive. Moreover, we have the following two results.
\begin{itemize}
\item[(i)] By the uniqueness of the main result in \cite{GKS12}, $(\K_{\phi})^{-1}  \simeq \K_{\phi^{-1}}$.
\item[(ii)] For any two contact isotopies $\phi, \psi$ on $(M, \xi)$, $\K_{\phi} \circ|_I \K_{\psi} = \K_{\phi \circ \psi}$. 
\end{itemize}
As a consequence, we know that $\K_{\phi} \geq_s \K_{\psi}$ if and only if $\K_{\phi^{-1} \circ \psi}$ satisfies the condition in Definition \ref{sheaf-order}, i.e., $SS(\K_{\phi^{-1} \circ \psi}) \subset T_{\{\tau \leq 0\}} (X \times X \times I)$. 
\end{ex}

Now, we are ready to give the proof of Theorem \ref{prop-sheaf-order}.

\begin{proof} [Proof of Theorem \ref{prop-sheaf-order}] Let us prove the first conclusion. By the admissibility hypothesis, for any $\F \in \D_{\rm adm}^b(\k_{X \times X \times I})$, $\F^{-1} \circ|_I \F = \k_{\Delta \times I}$. Since $SS(\k_{\Delta \times I}) \subset (\Delta_{T^*X} \times 0_I) \cup 0_{X \times X \times I}$, which is certainly contained in $T^*_{\{\tau \leq 0\}} (X \times X \times I)$, we know $\F \geq_s \F$, and we get the reflexivity of the relation $\geq_s$. Next, if $\F \geq_s \G$, and $\G \geq_s \H$, then by associativity of the operator $\circ|_I$, we know $SS(\F^{-1} \circ|_I \H) = SS((\F^{-1} \circ|_I \G) \circ|_I (\G^{-1} \circ|_I \H))$. By Proposition \ref{prop-ss-1}, 
\[ SS((\F^{-1} \circ|_I \G) \circ|_I (\G^{-1} \circ|_I \H)) \subset SS(\F^{-1} \circ|_I \G) \circ|_I SS(\G^{-1} \circ|_I \H). \]
By the explicit formula (\ref{corres}), the $\tau$-components of $SS(\F^{-1} \circ|_I \H)$ come from certain sums of $\tau_1$-components of $SS(\F^{-1} \circ|_I \G)$ and $\tau_2$-components of $SS(\G^{-1} \circ|_I \H)$. Our hypothesis implies that both $\tau_1$-components and $\tau_2$-components are non-positive, which yields that $\tau$-components are also non-positive. This proves the transitivity of the relation $\geq$. Next, if $\F \geq_s \G$ and $\G \geq_s \F$, then it is readily to check that $SS(\F^{-1} \circ|_I \G) \subset T^*(X \times X) \times 0_I$. By Proposition \ref{prop-ss-2}, $\F^{-1} \circ|_I \G \simeq p^{-1} {\rm R}p_*(\F^{-1} \circ|_I \G)$ where $p: X \times X \times I \to X \times X$ is the projection. For every $t \in I$, denote by $\iota_t: X \times X \times \{t\} \to X \times X \times I$ the inclusion. Then for every $t \in I$, 
\begin{align*}
(\F^{-1} \circ|_I \G)|_t & \simeq \iota_t^{-1} (p^{-1} {\rm R}p_*(\F^{-1} \circ|_I \G)) \\
& = (p \circ \iota_t)^{-1} {\rm R}p_*(\F^{-1} \circ|_I \G)\\
& = {\rm R}p_*(\F^{-1} \circ|_I \G) \\
& = (p \circ \iota_0)^{-1} {\rm R}p_*(\F^{-1} \circ|_I \G)\\
& = \iota_0^{-1} (p^{-1} {\rm R}p_*(\F^{-1} \circ|_I \G)) \\
& \simeq (\F^{-1} \circ|_I \G)|_0 = \F^{-1}|_{t=0} \circ \G|_{t=0} = \k_{\Delta}.
\end{align*}
Therefore, $\F^{-1} \circ|_I \G \simeq \k_{\Delta \times I}$. Convoluting $\F$ on both sides, we get $\G \simeq \F$. Thus, we obtain the anti-symmetry of the relation $\geq_s$. Therefore, $\geq_s$ is a partial order.

Let us prove the second conclusion. Let $M = S_g^*X$. Consider the sheaf quantization demonstrated in Example \ref{ex-sq}. It provides a map denoted by $\sigma: {\rm PCont}_0(M,\xi) \to \D_{\rm adm}^b(\k_{X \times X \times I})$ given by $\phi \to \mathcal K_{\phi}$.  In particular, it restricts to a map $\sigma: {\rm PCont}_+(M, \xi) \to \D_{{\rm adm} \cap {\rm dom}}^b(\k_{X \times X \times I}) = \D_{\rm adm}^b(\k_{X \times X \times I})\cap \D_{\rm dom}^b(\k_{X \times X \times I})$. By the conclusion at the end of Example \ref{ex-sq} (which is derived from (i) and (ii) in Example \ref{ex-sq}), the relative growth rates are related by $\gamma_{\geq_o}(\phi, \psi) \geq \gamma_{\geq_+}(\mathcal K_{\phi}, \mathcal K_{\psi})$. Therefore, passing to the pseudo-metrics $d_{\geq_o}$ and $d_{\geq_s}$ constructed from $\gamma_{\geq_o}$ and $\gamma_{\geq_s}$ as in (\ref{dfn-pm}), respectively, for any $\phi, \psi \in  {\rm PCont}_+(M, \xi)$, we have that $d_{\geq_o}(\phi, \psi) \geq d_{\geq_s}(\sigma(\phi), \sigma(\psi))$. Thus we get the desired conclusion. \end{proof}

\begin{remark} The normalization condition that $\F|_{t=0} = \k_{\Delta}$ for every $\F \in \D_{\rm adm}^b(\k_{X \times X \times I})$ is crucial to the proof of Proposition \ref{prop-sheaf-order}, in particular, to the proof of the anti-symmetric property of the relation $\geq_s$. \end{remark}

\section{Appendix} \label{sec-app}
In this appendix, we provide another formulation of the relative growth rate $\gamma_{\geq}(\cdot, \cdot)$ on $G_+$ which is defined via $\rho_{\geq}^+(\cdot, \cdot)$ in (\ref{dfn-rgr}). In fact, we will give an equivalent formulation of $\rho_{\geq}^+(\cdot, \cdot)$ on $G_+$ which claims to be defined by only prime numbers. Recall that $G_+$ is the set of all the dominants of $G$, and without loss of generality assume that $\rho_{\geq}^+(a,b)>0$ for any $a, b \in G_+$. For any $a, b \in G_+$, we call the pair $(k,l) \in \N^2$ an {\it $(a, b)$-ordering pair} if it satisfies $a^k \geq b^l$. Denote by $\mathcal O_{(a,b)}$ the set of all the $(a, b)$-ordering pairs. Recall that 
\[ \rho^{+}_{\geq} (a,b) = \inf\left\{ \frac{k}{l} \in \Q_+ \,\bigg| \, (k,l) \in \mathcal O_{(a,b)} \right\}. \]
Denote by $\mathcal P_{(a,b)}$ the subset of $\mathcal O_{(a,b)}$ which consists of all the $(a,b)$-ordering pairs $(p,q) \in \mathcal O_{(a,b)}$ such that both $p$ and $q$ are prime numbers. Then we have the following property.

\begin{prop} \label{rgr-prime} For any $a,b \in G_+$, we have \begin{equation} \label{equ-rgr-prime}
\rho^{+}_{\geq}(a,b) =\inf\left\{ \frac{p}{q} \in \Q_+\,\bigg| \, (p,q) \in \mathcal P_{(a,b)} \right\}. 
\end{equation}
\end{prop}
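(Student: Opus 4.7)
The direction $\rho^+_\geq(a,b) \leq \inf_{(p,q)\in\mathcal P_{(a,b)}} p/q$ is immediate from the inclusion $\mathcal P_{(a,b)} \subset \mathcal O_{(a,b)}$, so the task is to produce, for every $\epsilon > 0$, a prime pair $(p,q) \in \mathcal P_{(a,b)}$ with $p/q \leq \rho^+_\geq(a,b) + \epsilon$. I will assume $\rho := \rho^+_\geq(a,b) > 0$, which is automatic for $a, b \in G_+$ by the Eliashberg--Polterovich inequality $\rho^+_\geq \cdot \rho^-_\geq \geq 1$ recalled in the introduction.

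The plan is to combine Fekete's subadditive lemma with the prime number theorem in arithmetic progressions. Bi-invariance yields that $f(l) := \inf\{k \in \N : a^k \geq b^l\}$ is subadditive, since $a^{f(l)+f(l')} = a^{f(l)} a^{f(l')} \geq b^l b^{l'} = b^{l+l'}$, so Fekete gives $f(l)/l \to \rho$. Restricting $l$ to the (cofinal) subsequence of primes preserves this limit, so for every sufficiently large prime $q$ one has $f(q)/q < \rho + \epsilon/2$. Since $a \in G_+$ is a dominant, pick $K \in \N$ with $a^K \geq 1$; by bi-invariance $a^{jK} \geq 1$ for every $j \geq 0$, and hence $(f(q)+jK,\,q) \in \mathcal O_{(a,b)}$ for every $j \geq 0$. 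The search for a prime pair thus reduces to finding a prime $p$ in the arithmetic progression $\{f(q) + jK : j \geq 0\}$ within distance $o(q)$ of $f(q)$. When $\gcd(f(q), K) = 1$ the Siegel--Walfisz theorem (or an elementary prime-gap argument for an AP with fixed modulus) supplies such a prime with $p - f(q) = O_K(\log q)$, whence $p/q < \rho + \epsilon$ for $q$ large enough, and $(p,q) \in \mathcal P_{(a,b)}$ is the desired pair.

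The main obstacle is guaranteeing the coprimality $\gcd(f(q), K) = 1$ along an infinite subsequence of primes $q$: if $\gcd(f(q), K)$ were divisible by some fixed prime dividing $K$ for all large primes $q$, the arithmetic progression $\{f(q) + jK\}$ would contain no primes beyond that fixed prime. To circumvent this, I would introduce a second ordering pair $(K', 1) \in \mathcal O_{(a,b)}$ (which exists since $a^{K'} \geq b$ for some $K'$ by the dominant property) and use closure under componentwise addition to produce $(f(q) + iK' + jK,\, q+i) \in \mathcal O_{(a,b)}$ for all $i, j \geq 0$. The delicate step, and the principal technical difficulty, is to choose $i$ of size $O(\log q)$ such that $q+i$ is prime and simultaneously $\gcd(f(q)+iK',K)=1$; such an $i$ exists provided the coset $\{f(q) + iK' \pmod K : i \geq 0\}$ meets $(\Z/K)^*$, which one arranges by varying $K$ among admissible exponents (multiples of the minimal $K_0$ with $a^{K_0} \geq 1$) so that $\gcd(K,K')$ is kept as small as possible. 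Once this is secured, the previous paragraph applies to produce the desired prime $p$, with the additional error $i/q = o(1)$ absorbed into $\epsilon$.
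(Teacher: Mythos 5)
Your first inequality and the Fekete/subadditivity setup are fine, and fixing $q$ prime from the outset (using that the limit defining $\rho^+_\geq$ persists along the subsequence of primes) is a legitimate variant of what the paper does. The argument breaks down, however, at the construction of the prime $p$, in two places. First, the analytic input you invoke does not exist: neither Siegel--Walfisz nor any ``elementary prime-gap argument'' produces a prime in a fixed arithmetic progression within distance $O_K(\log q)$ of a given starting point. Even with no progression constraint at all, the best unconditional bound on prime gaps is the Baker--Harman--Pintz exponent $x^{0.525}$, and an $O(\log x)$ gap is the wide-open Cram\'er conjecture; for your purposes a gap of size $o(q)$ would suffice and is available in short intervals, but not $O(\log q)$. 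Second, and more seriously, the coprimality obstruction $\gcd(f(q),K)=1$, which you correctly identify as the crux, is never resolved: the proposed remedy of varying $K$ among admissible exponents while simultaneously choosing $i$ with $q+i$ prime and $\gcd(f(q)+iK',K)=1$ is a sketch of a strictly harder simultaneous prime-finding problem, not a proof. As written the second direction does not close.

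The detour through arithmetic progressions is also unnecessary, and this is where your route diverges from the paper's. The paper takes a minimizing sequence $(k_n,l_n)\in\mathcal O_{(a,b)}$ with $k_n,l_n\to\infty$, applies Baker--Harman--Pintz to find primes $p_n\in[k_n,k_n+k_n^{0.6}]$ and $q_n\in[l_n-l_n^{0.6},l_n]$, and concludes $a^{p_n}\geq a^{k_n}\geq b^{l_n}\geq b^{q_n}$; the pair stays in $\mathcal O_{(a,b)}$ because powers of a dominant are monotone ($x^{m}\geq x^{m'}$ for $m\geq m'$, which holds in the paper's settings since $x^{m-m'}$ is itself ``positive''), not because $p_n$ lies in a prescribed residue class. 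If you grant that same monotonicity --- and the paper's proof uses it without comment --- then any prime $p\in[f(q),\,f(q)+f(q)^{0.6}]$ already gives $(p,q)\in\mathcal P_{(a,b)}$ with $p/q\to\rho$, and the entire second half of your argument can be deleted. If you insist on using only the single relation $a^K\geq 1$, the coprimality problem is genuine and your proposal does not overcome it.
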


\begin{proof} Denote the left-hand side of the equation (\ref{equ-rgr-prime}) by $L$ and the right-hand side of the equation (\ref{equ-rgr-prime}) by $R$. Since $\mathcal P_{(a,b)} \subset \mathcal O_{(a,b)}$, it is obvious that $L\leq R$. For the other direction, by definition $L =\rho_{\geq}^+(a,b)$ implies there exists a sequence of pairs $\{(k_n, l_n)\}_{n \in \N}$ in $\mathcal O_{(a,b)}$ such that $\lim_{n \to\infty} \frac{k_n}{l_n} =L$. Passing to a subsequence if it is necessary, we can assume that $l_n \to \infty$. Since $L > 0$, we know that $k_n \to \infty$ as well. Pick $\alpha = 0.6$, then by Baker-Harman-Pintz's theorem in \cite{BHP01}, when $n$ is sufficiently large, there exists a prime number, denoted by $q_n$ lying in $[l_n - l_n^{\alpha}, l_n]$. In particular, 
\begin{equation}\label{est-1}
q_n \leq l_n \,\,\,\,\mbox{and}\,\,\,\, \lim_{n \to \infty} \frac{q_n - l_n}{l_n} = 0.
\end{equation} 
Similarly, when $n$ is sufficiently large, there exists a prime number, denoted by $p_n$ lying in $[k_n, k_n + k_n^{\alpha}]$. In particular, 
\begin{equation}\label{est-2}
p_n \geq k_n \,\,\,\,\mbox{and}\,\,\,\, \lim_{n \to \infty} \frac{p_n - k_n}{k_n} = 0.
\end{equation} 
Each $(k_n, l_n)$ belongs to $\mathcal O_{(a,b)}$, so $a^{p_n} \geq a^{k_n} \geq b^{l_n} \geq b^{q_n}$. Therefore, $(p_n, q_n) \in \mathcal P_{(a, b)}$. Moreover, due to relations (\ref{est-1}) and (\ref{est-2}), 
\begin{align*}
\lim_{n \to \infty} \frac{p_n}{q_n} = \lim_{n \to \infty} \frac{p_n - k_n + k_n}{q_n - l_n + l_n} =\lim_{n \to \infty} \frac{\left(\frac{p_n - k_n}{k_n} +1 \right) k_n}{\left(\frac{q_n - l_n}{l_n} +1 \right) l_n} = \lim_{n\to \infty} \frac{k_n}{l_n} = L.
\end{align*}
This implies that $R \leq L$, which is the desired conclusion. \end{proof}

\begin{remark} The only place in the proof of Lemma \ref{rgr-prime} where we use the hypothesis that $L = \rho_{\geq}^+(a, b)> 0$ is to guarantee that the sequence $\{k_n\}_{n \in \N}$ tends to infinity (so that we can apply Baker-Harman-Pintz's theorem from \cite{BHP01} to obtain an $(a,b)$-ordering pair in $\mathcal P_{(a,b)}$). 
\end{remark}

\section{Acknowledgement} This project was motivated by various questions raised in a guided reading course at Tel Aviv University in Fall 2018, which was organized by Leonid Polterovich. We thank Leonid Polterovich for providing this valuable opportunity, as well as many interesting and enlightening questions. We also thank Matthias Meiwes, Egor Shelukhin, and Michael Usher for helpful conversations. In particular, we thank Leonid Polterovich for useful feedback on the first version of the paper. This work was completed when the second author holds the CRM-ISM Postdoctoral Research Fellow at CRM, University of Montreal, and the second author thanks this institute for its warm hospitality.

\noindent\\

\bibliographystyle{amsplain}
\bibliography{biblio_cbm}
\noindent\\

\end{document}